\pdfoutput=1
\documentclass[10pt, a4paper]{article}

\flushbottom
\usepackage{ifluatex} %
\usepackage[perpage]{footmisc}
\usepackage[a4paper,marginparwidth=2cm, scale=0.68, heightrounded]{geometry}
\usepackage[utf8]{inputenc} %
\usepackage{scrextend}
\usepackage[ngerman,main=english]{babel}
\babeltags{german=ngerman}

\usepackage{graphicx}
\usepackage[labelfont=bf]{caption}
\usepackage{subcaption}
\usepackage{amsmath,mathtools,amssymb,amsfonts, multicol}
\usepackage{tikz-cd} 
\usepackage{marvosym}
\usepackage{array}
\renewcommand{\arraystretch}{2}
\usepackage{slashed} %

\ifluatex
   \usepackage{fontspec}
   \usepackage{unicode-math} %
   \setmainfont{Latin Modern Roman}
   \setmathfont{Latin Modern Math Regular}  %
   \setmathfont[range=\setminus]{XITS Math}
\else
  \usepackage{lmodern}
  \usepackage[T1]{fontenc}
\fi

\usepackage[autostyle=true]{csquotes} %
\usepackage[final]{microtype}

\usepackage{amsthm}
\usepackage{thmtools}
\usepackage[pdfusetitle]{hyperref}
\usepackage[citestyle=alphabetic,
            bibstyle=alphabetic,
            backend=biber,
            maxnames=5,
            url=true,
            urldate=comp,
            doi=true,
            isbn=false,
            giveninits=true,
            eprint=false,
            noerroretextools]{biblatex}

\bibliography{literatur.bib}
\AtEveryBibitem{\clearfield{month}}
\AtEveryBibitem{\clearfield{day}}
\AtEveryBibitem{\clearlist{language}}
\ExecuteBibliographyOptions[misc]{eprint=true}
%\AtEveryBibitem{%
%  \ifentrytype{thesis}
%    {}
%    {
%      \ifentrytype{online}{}
%      {
%      \clearfield{url}%
%      \clearfield{urldate}%
%      }
%    }%
%}
%
\usepackage[shortlabels]{enumitem}
\setlist[itemize]{noitemsep}
\setlist[enumerate]{label=\upshape(\arabic*)}
\newlist{myenumi}{enumerate}{1}
\setlist[myenumi,1]{label=\upshape(\roman*)}
\newlist{myenuma}{enumerate}{1}
\setlist[myenuma,1]{label=\upshape(\alph*)}

\usepackage{xcolor}

\usepackage{todonotes}
\usepackage{xfrac}  
\usepackage{titlesec}
\titleformat*{\section}{\large\bfseries}
\titleformat*{\subsection}{\bfseries}
\declaretheorem[name=Theorem,numberwithin=section]{thm}
\declaretheorem[name=Theorem, numbered=no]{thm*}
\declaretheorem[name=Lemma,numberlike=thm]{lem}
\declaretheorem[name=Lemma,numbered=no]{lem*}
\declaretheorem[name=Claim,numbered=no]{claim*}
\declaretheorem[name=Corollary,numberlike=thm]{cor}
\declaretheorem[name=Proposition,numberlike=thm]{prop}
\declaretheorem[name=Definition,numberlike=thm, style=definition]{defi}
\declaretheorem[name=Example, numberlike=thm, style=remark]{ex}
\declaretheorem[name=Remark, numberlike=thm, style=remark]{rem}

\declaretheorem[name=Notation,numberlike=thm, style=remark]{nota}
\declaretheorem[name=Notation,numbered=no, style=remark]{nota*}
\declaretheorem[name=Construction,numberlike=thm, style=remark]{const}
\declaretheorem[name=Construction,numbered=no, style=remark]{const*}

\declaretheorem[name=Theorem]{prethm}

\declaretheorem[name=Lemma,numberlike=prethm]{prelem}
\declaretheorem[name=Corollary,numberlike=prethm]{precor}

\numberwithin{equation}{section}
\allowdisplaybreaks[4]

\usepackage[noabbrev]{cleveref} %
\crefname{figure}{Figure}{Figures}
\crefname{nota}{Notation}{Notations}
\crefname{table}{Table}{Tables}
\crefname{thm}{Theorem}{Theorems}
\crefname{lem}{Lemma}{Lemmas}
\crefname{defi}{Definition}{Definitions}
\crefname{cor}{Corollary}{Corollaries}
\crefname{prop}{Proposition}{Propositions}
\crefname{ex}{Example}{Examples}
\crefname{rem}{Remark}{Remarks}
\crefname{const}{Construction}{Constructions}
\crefname{conj}{Conjecture}{Conjectures}
\crefname{section}{Section}{Sections}
\crefname{chapter}{Chapter}{Chapters}
\crefname{appendix}{Appendix}{Appendices}
\crefdefaultlabelformat{#2\textup{#1}#3}
\creflabelformat{enumi}{(#2#1#3)}
\crefname{prethm}{Theorem}{}
\crefname{precor}{Corollary}{}
\crefname{prelem}{Lemma}{}
\crefname{pregen}{Principle}{}
\crefname{guide}{Guideline}{}

%%%%%%%%%%%%% no dot after theorems and definitions: Theorem 3.4 %%%%%%%%%%%%
%\usepackage{xpatch}
%\makeatletter
%\AtBeginDocument{\xpatchcmd{\cref@thmoptarg}{\thm@headpunct{.}}{\thm@headpunct{}}{}{}}
%\AtBeginDocument{\xpatchcmd{\cref@thmnoarg}{\thm@headpunct{.}}{\thm@headpunct{}}{}{}}
%\makeatother
%%%%%%%%%%%%%%%%%%%%%%%%%%%%%%%%%%%%%%%%%%%%%%%%%%%%%%%%%%%%%%%%%%%%%%%%%%%%%%%%%%%%%%%%

\usepackage{xparse}
\usepackage{autonum} %equations without reference has no number

\usepackage{thomasmacros}
\usepackage{titling}

\setlength{\droptitle}{-1cm}  

\usepackage{authblk}

\makeatletter
\@beginparpenalty=5000              %no pagebrake directly befor an itemize or enumerate...
\clubpenalties=3 5000 5000 0      %Makes a pagebreak if section of theorem would have just two lines or less befor the pagebreak. 
\makeatother

\title{\Large\textbf{Scalar curvature rigidity and the higher mapping degree}}
\author{Thomas Tony
%\thanks{Mathematical Institute, University of Münster, Germany -- email:~\href{mailto:math@ttony.eu}{math@ttony.eu} -- url:~\href{https://www.ttony.eu}{www.ttony.eu}}
%\thanks{Funded by the European Union (ERC Starting Grant 101116001 – COMSCAL) and by the Deutsche Forschungsgemeinschaft (DFG, German Research Foundation) – Project-ID 427320536 – SFB 1442, as well as under Germany’s Excellence Strategy EXC 2044 390685587, Mathematics Münster: Dynamics–Geometry–Structure.}
}
%\affil{Mathematical Institute, University of Münster, Germany\\\vspace{0.2cm} email:~\href{mailto:math@ttony.eu}{math@ttony.eu} url:~\href{https://www.ttony.eu}{www.ttony.eu}}
%
%
%
%
%
%
\definecolor{Mydarkblue}{rgb}{0.0, 0.2, 0.4} %Dunkelblau
\definecolor{myCustomColor}{rgb}{0.0, 0.2, 0.4}
\hypersetup{
  colorlinks=true,
  allcolors=blue,
  pdfauthor=Thomas Tony,
}
\date{\vspace{-2\baselineskip}}
%
%
%
%
%-- special for this document
% ======================================================================================
\newcommand{\SE}{\SpinBdl_E}
\newcommand{\SM}{\SpinBdl M}
\newcommand{\SN}{\SpinBdl N}
\newcommand{\DE}{\Dirac_E}
\newcommand{\DL}{\Dirac_{\mathcal{L}}}
\newcommand{\ue}{u_\epsilon}
\newcommand{\ve}{v_\epsilon}
\DeclareMathOperator{\feblanc}{f_\epsilon}
\DeclareMathOperator{\hideg}{deg_{hi}}
\DeclareMathOperator{\Adeg}{\deg_{\hat{A}}}
\newcommand{\fe}[1]{\feblanc \brackets{#1}}
\NewDocumentCommand{\tensgr}{}{\mathbin{\widehat{\otimes}}}
\NewDocumentCommand{\boxgr}{}{\mathbin{\widehat{\boxtimes}}}
\NewDocumentCommand{\tens}{}{\otimes}
% ======================================================================================
%
%
\AtEndDocument{\bigskip{\footnotesize%
  \textsc{Mathematisches Institut, Universität Münster, Einsteinstr. 62, 48149 Münster,
  Germany} \par  
  email:~\href{mailto:math@ttony.eu}{math@ttony.eu}, url:~\href{https://www.ttony.eu}{www.ttony.eu}
}}

\newcommand*{\keywordbullet}{\,\textcolor{darkgray}{\scriptsize\textbullet\footnotesize\,}}
\providecommand{\keywords}[4]
{
  \footnotesize
  \textbf{\textit{Keywords---}} #1\keywordbullet #2\keywordbullet #3\keywordbullet #4
}

\begin{document}
\maketitle
\begin{abstract}
  A closed connected oriented Riemannian manifold~$N$ with non-vanishing Euler characteristic, non-negative curvature operator and $0< 2\Ric_N<\scal_N$ is area-rigid in the sense that any area non-increasing spin map $f\colon M\to N$ %from a closed connected oriented Riemannian manifold~$M$ 
  with non-vanishing $\hat{A}$-degree and $\scal_M\geq \scal_N \op f$ is a Riemannian submersion with $\scal_M=\scal_N \op f$. This is due to Goette and Semmelmann and generalizes a result by Llarull. In this article, we show area-rigidity for not necessarily orientable manifolds with respect to a larger class of maps $f\colon M\to N$ by replacing the topological condition on the $\hat{A}$-degree by a less restrictive condition involving the so-called higher mapping degree. This includes fiber bundles over even dimensional spheres with enlargeable fibers, e.g.~$\pr_1\colon S^{2n}\times T^k \to S^{2n}$. We develop a technique to extract from a non-vanishing higher index a geometrically useful family of almost $\Dirac$-harmonic sections. This also leads to a new proof of the fact that any closed connected spin manifold with non-negative scalar curvature and non-trivial Rosenberg index is Ricci flat.
\end{abstract}
\keywords{scalar curvature rigidity}{comparison geometry}{Dirac operator}{higher index theory}
\normalsize
\section{Introduction and main results}
A classical result in scalar curvature comparison geometry by \textcite{Llarull1998} states that a smooth area non-increasing map $f\colon M\to S^n$ from an $n$-dimensional closed connected Riemannian spin manifold of non-zero degree onto the round sphere, $n\geq 2$, with $\scal_M\geq n\brackets{n-1}=\scal_{S^n}$, is necessarily an isometry. An important consequence is that the round sphere is area-extremal in the sense that one cannot increase the metric and the scalar curvature simultaneously. \textcite{Goette2000} generalized this result to area non-increasing spin maps $f\colon M\to N$ of non-zero $\hat{A}$-degree onto a closed connected oriented Riemannian manifold of non-vanishing Euler characteristic and non-negative curvature operator. Further generalizations are by \textcite{Lott2021} to manifolds with boundary, by \textcite{Cecchini2022} to Lipschitz maps with lower regularity, and by \textcite{Bettiol2024}, who weakened the condition on the curvature operator of the target manifold in dimension four. \medbreak
In this article, we generalize the extremality and rigidity statement of \textcite[Theorem 2.4]{Goette2000} to spin maps between non-orientable manifolds and replace the topological condition on the $\hat{A}$-degree by a less restrictive index-theoretical condition involving the so-called higher mapping degree (see \cref{HigherMappingDegree}). Here spin map means that the map is compatible with the first and second Stiefel-Whitney classes of the involved manifolds. 
\begin{prethm}[\cref{RigidityThmHilbertBundleE} and \cref{RigidityThmHigherDergeeRosenberg}] \label{ThmA}
  Let $f\colon \brackets{M,g}\to \brackets{N,\overline{g}}$ be an area non-increasing spin map between two closed connected Riemannian manifolds of dimension~$n+k$ and~$n$, respectively. Suppose that the curvature operator of~$N$ is non-negative and
  \begin{equation}
    \chi\brackets{N}\cdot \hideg\brackets{f}\neq 0 \in \KO_k \brackets{\Cstar \pi_1\brackets{M}}.
  \end{equation}
  Then $\scal_M\geq \scal_N\op f$ on $M$ implies $\scal_M=\scal_N \op f$. If, moreover, $\scal_N>2\Ric_N>0$ (or~$f$ is distance non-increasing and $\Ric_N>0$), then $\scal_M\geq \scal_N\op f$ implies that~$f$ is a Riemannian submersion. 
\end{prethm}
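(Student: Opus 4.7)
The plan is to combine the twisted Dirac operator construction of Goette--Semmelmann with the higher index of a Mishchenko--Fomenko bundle. I would form the Dirac bundle $S_M\tensgr f^*\mathcal{S}_N\tensgr \mathcal{L}$ over $M$, where $\mathcal{S}_N$ is the auxiliary twisting bundle on $N$ employed by Goette--Semmelmann (built from the complex spinor bundle of the oriented manifold $N$, with $\chi\brackets{N}\neq 0$ guaranteeing its usefulness through the Euler class), and $\mathcal{L}$ is the Mishchenko--Fomenko bundle of $\Cstar \pi_1\brackets{M}$-Hilbert modules associated to the universal cover of $M$. The spin-map hypothesis on $f$ is precisely what is needed to form $f^*\mathcal{S}_N$ as a bundle on the possibly non-orientable manifold $M$. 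The associated twisted Dirac operator $\Dirac$ is a $\Cstar \pi_1\brackets{M}$-linear operator whose higher index lies in $\KO_k\brackets{\Cstar \pi_1\brackets{M}}$, and a K-theoretic computation (multiplicativity of the index under twisting, together with an Atiyah--Singer type identification on $N$) identifies it, up to a non-zero constant, with $\chi\brackets{N}\cdot \hideg\brackets{f}$; by hypothesis, it is non-zero.

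The central new technical step is to extract a geometrically useful object from a non-vanishing higher index. Since the kernel of $\Dirac$ is only a $\Cstar \pi_1\brackets{M}$-Hilbert module, one cannot directly pick an $L^2$-harmonic spinor as in Llarull or Goette--Semmelmann. Instead, I would use spectral localization: for each $\epsilon>0$ produce sections $\psi_\epsilon$ in the range of the spectral projection $\mathbf{1}_{[0,\epsilon^2]}\brackets{\Dirac^2}$, normalized so that they admit a uniform lower bound (reflecting the non-vanishing higher index) while $\|\Dirac\psi_\epsilon\|\leq \epsilon\|\psi_\epsilon\|$. Making this extraction compatible with the pointwise Weitzenb\"ock analysis, so that inequalities derived from $\langle \Dirac^2\psi_\epsilon, \psi_\epsilon\rangle$ survive the limit $\epsilon\to 0$, is the principal obstacle and constitutes the \emph{family of almost $\Dirac$-harmonic sections} announced in the abstract.

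Once $\{\psi_\epsilon\}$ is in hand, apply the Schr\"odinger--Lichnerowicz--Weitzenb\"ock formula to $\Dirac$. The curvature endomorphism splits as $\tfrac14\scal_M + \mathcal{R}^{f^*\mathcal{S}_N} + \mathcal{R}^{\mathcal{L}}$, and the Mishchenko--Fomenko contribution $\mathcal{R}^{\mathcal{L}}$ vanishes because $\mathcal{L}$ is flat. The pointwise Goette--Semmelmann curvature estimate---valid because $\chi\brackets{N}\neq 0$, the curvature operator of $N$ is non-negative, and $f$ is area non-increasing---gives the bound $\mathcal{R}^{f^*\mathcal{S}_N}\geq -\tfrac14\scal_N\op f$. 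Combined with $\scal_M\geq \scal_N\op f$, the total Weitzenb\"ock curvature is non-negative, and the almost-harmonic property of $\psi_\epsilon$ forces each inequality to become equality in the limit; this yields $\scal_M=\scal_N\op f$ on all of $M$.

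For the submersion conclusion, the classical Goette--Semmelmann pointwise analysis of the equality case in their curvature estimate shows that under the additional hypothesis $\scal_N>2\Ric_N>0$, or under $\Ric_N>0$ combined with $f$ distance non-increasing, equality can only hold at points where $df$ is an isometry on the orthogonal complement of its kernel, so $f$ is a Riemannian submersion. This pointwise consequence transfers to the present setting because $\mathcal{L}$ has fiber $\Cstar \pi_1\brackets{M}$ at every point, so the spectral construction produces approximate sections that are non-degenerate everywhere on $M$, making the pointwise Goette--Semmelmann argument applicable in the limit.
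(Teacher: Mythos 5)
Your proposal follows the paper's overall strategy (twist with the Mishchenko bundle, non-vanishing higher index, approximate harmonic sections, Weitzenböck plus the Goette--Semmelmann curvature estimate), but it has two genuine gaps, and they are exactly the two technical contributions of the paper. First, the index identification. You claim that ``multiplicativity of the index under twisting, together with an Atiyah--Singer type identification on $N$'' gives $\ind\Dirac=\chi\brackets{N}\cdot\hideg\brackets{f}$ up to a non-zero constant. This does not go through: the index lives in $\KO_k\brackets{\Cstar\pi_1\brackets{M}}$, where no cohomological Atiyah--Singer formula is available, and $f$ is not assumed to be a fiber bundle, so there is no product structure along which multiplicativity could be applied. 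The paper proves this as a separate theorem (\cref{IndexThm}) by perturbing the Dirac operator with a Callias-type potential $\lambda\overline{c}\brackets{f^*\xi_1}$ built from a vector field on $N$ transversal to the zero section whose zeros are regular values of $f$, invoking the cut-and-paste principle for higher indices, and identifying the surviving local contributions with $\chi\brackets{N}$ via the Poincaré--Hopf lemma; this is also where the self-adjoint Clifford action $\overline{c}$ coming from the indefinite-signature spin structure on $TM\oplus f^*TN$ is essential, and it is what handles non-orientable $M$ and $N$ --- your bundle ``built from the complex spinor bundle of the oriented manifold $N$'' presupposes an orientability that the statement does not assume.

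Second, the passage from spectral sections to pointwise conclusions. You correctly flag this as ``the principal obstacle,'' but you do not resolve it, and your closing justification --- that the sections are ``non-degenerate everywhere on $M$'' because $\mathcal{L}$ has fiber $\Cstar\pi_1\brackets{M}$ at every point --- is a non sequitur: the fiber being large does not prevent $\norm{\ue}_p$ from being arbitrarily small on a region of $M$, and there the $\Lblanc^1$-smallness of the weighted curvature term extracted from $\Ltwoscalarproduct{\DE^2\ue}{\ue}$ yields no pointwise information. The paper's resolution occupies all of \cref{SectionAlmostHarmonicSections}: take sections almost harmonic for \emph{all} powers $\Dirac^j$, deduce uniform $C^m$-bounds from elliptic estimates and Sobolev embedding (\cref{ClassicalEstimatesExtension}, \cref{AlmostHarmonicImpliesAlmostParallel}), feed the Weitzenböck inequality back in to get $\Ltwonorm{\nabla\ue}<\epsilon$, upgrade this to $\Linftyblanc$-control of $\nabla\ue$ by Moser iteration applied to $\rho\brackets{\abs{\nabla\ue}^2}^{1/2}$ over all states $\rho$ (\cref{MoserIteration}, \cref{NablaUeSmallinInfinity}), and then use the Poincaré inequality to show $\abs{\ue}_p^2$ is uniformly close to its mean $\overline{u}_\epsilon$ with $\Anorm{\overline{u}_\epsilon}=\brackets{\vol M}^{-1}$ (\cref{AlmostConstantLem}). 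Only this pointwise lower bound lets one conclude, as in \cref{RigidityAlmostHarmonicSpinorThm}, that the local error terms $h_1$ and $h_2$ vanish identically, which gives $\scal_M=\scal_N\op f$ on all of $M$ and, through the Goette--Semmelmann equality analysis, the Riemannian submersion statement. Without an argument of this kind your limit $\epsilon\to 0$ only forces equality on the set where $\abs{\ue}^2$ stays bounded away from zero.
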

\cref{ThmA} applies to the projections $\pr_1\colon S^{2n}\times T^k\to S^{2n}$ and $\pr_1\colon \RP^{2n}\times \Sigma^{8k+j}\to \RP^{2n}$ for $j\in \set{1,2}$ and $\Sigma^{8k+j}$ an exotic sphere with non-vanishing Hitchin invariant (\cref{ExampleTorus} and \cref{ExExoticSpehere}). For both projections the $\hat{A}$-degree vanishes, hence \cref{ThmA} is a proper extension of \cite[Theorem 2.4]{Goette2000}. The second example only works because of the generalization to non-orientable manifolds (see \cref{RemarkOrientable}). \medbreak 
The motivation for \cref{ThmA} is the fact that the Rosenberg index of a closed spin manifold~$M$ is the most general known index-theoretical obstruction to the existence of a positive scalar curvature metric on~$M$. Since the Rosenberg index of the $n$-torus does not vanish, the $n$-torus does not admit a positive scalar curvature metric. This information cannot be read off the classical index of the spin Dirac operator, hence the higher index carries more information. In particular, the classical rigidity statement by \textcite[][Theorem 2.4]{Goette2000} does not apply to $\pr_1\colon S^{2n}\times T^k\to S^{2n}$, but the higher version in \cref{ThmA} does. A further obstruction to a positive scalar curvature metric on a closed spin manifold is enlargeability \cite[][Theorem A]{Gromov1980}. It also yields that the $n$-torus does not admit a positive scalar curvature metric. In general, the enlargeability obstruction for a closed spin manifold is dominated by the Rosenberg obstruction \cite{Hanke2006, Hanke2007}, so \cref{ThmA} also applies to certain maps with enlargeable fibers (see \cref{PreCor} and \cref{ExampleProduct}). Alternatively, the methods used in the proof of \cref{ThmA} would carry over to almost flat bundles as they are used in \cite{Hanke2007}. \medbreak
\cref{ThmA} yields a rigidity statement for fiber bundles over 2-connected Riemannian manifolds of non-zero Euler characteristic and non-negative curvature operator. A classification of these manifolds (\cref{LemClassificationN}) yields the following rigidity statement:
\begin{precor}[\cref{CorRigidityFiberBundles}] \label{PreCor}
  Let~$N$ be one of the following classes of Riemannian manifolds:
  \begin{itemize}
    \item Sphere~$S^{2n}$, $n\geq 2$, equipped with a metric of non-negative curvature operator and $\scal_{S^n}>2\Ric_{S^n}>0$. 
    \item A quaternionic Grassmannian or the Cayley projective plane equipped with their symmetric Riemannian metrics.
  \end{itemize}
  Let $f\colon M\to N$ be a fiber bundle, where the total space~$M$ is a closed connected spin manifold and the typical fiber~$F$ satisfies 
  \begin{equation}
    \chi\brackets{N} \cdot \alpha\brackets{F} \neq 0 \in \KO_{\dim \brackets{F}}\brackets{\Cstar \pi_1\brackets[\big]{F}} \quad \textit{(e.g. } F \textit{ is enlargeable)}.
  \end{equation}
  Then the map~$f$ is a Riemannian submersion with $\scal_M=\scal_N \op f$ if~$f$ is area non-increasing and the scalar curvatures satisfy $\scal_M\geq \scal_N\op f$. Here~$\chi\brackets{N}$ denotes the Euler characteristic of~$N$, and $\alpha\brackets{F}$ denotes the Rosenberg index of the fiber. \medbreak
\end{precor}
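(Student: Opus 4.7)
\medbreak
\textbf{Proof proposal.} The plan is to deduce the corollary from \cref{ThmA} by verifying the curvature hypotheses on each admissible target~$N$ and by identifying the higher mapping degree of a fiber bundle with the Rosenberg index of its fiber.

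For the curvature side, the sphere case satisfies $\scal_N>2\Ric_N>0$ directly by hypothesis. The quaternionic Grassmannians and the Cayley projective plane equipped with their symmetric metrics are Einstein with positive scalar curvature, so $\Ric_N=\lambda g$ for some $\lambda>0$, and the inequality $\scal_N>2\Ric_N$ is then equivalent to $\dim N>2$, which holds since the smallest of the listed symmetric examples has dimension~$8$. Non-negative curvature operator is hypothesized in the sphere case and is a feature of these particular symmetric spaces appearing in the classification \cref{LemClassificationN}; the Euler characteristic $\chi(N)$ is non-zero throughout the list, because in each case $N$ has cohomology concentrated in even degrees. Thus the full, Riemannian-submersion form of \cref{ThmA} is available.

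For the index-theoretic side, the key observation is that every admissible $N$ is $2$-connected: even-dimensional spheres $S^{2n}$ with $n\geq 2$ trivially, and the symmetric examples $G/H$ because $G$ and the relevant isotropy subgroup $H$ are both simply connected Lie groups, so $\pi_2(G/H)\cong\pi_1(H)=0$. Applying the long exact sequence of the fibration $F\hookrightarrow M\xrightarrow{f} N$ then gives an isomorphism $\pi_1(F)\xrightarrow{\cong}\pi_1(M)$ induced by the inclusion of a fiber, hence a canonical identification $\KO_{\dim F}(\Cstar\pi_1(F))\cong \KO_k(\Cstar\pi_1(M))$. Under this identification I expect $\hideg(f)=\alpha(F)$: the family/fiberwise Dirac construction entering the definition of the higher mapping degree should specialize, for a fiber bundle, to the Mishchenko--Fomenko assembly element defining $\alpha(F)$. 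Combined with the hypothesis $\chi(N)\cdot\alpha(F)\neq 0$, this yields $\chi(N)\cdot\hideg(f)\neq 0$, and \cref{ThmA} then delivers the stated conclusion.

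The main obstacle is the last identification $\hideg(f)=\alpha(F)$; this is a multiplicativity-type statement along fiber bundles that is not entirely formal and will need a family-index argument rather than a diagram chase. The parenthetical case of enlargeable $F$ is then covered by the theorem of Hanke--Schick \cite{Hanke2006, Hanke2007}, which guarantees $\alpha(F)\neq 0$ for every closed enlargeable spin manifold.
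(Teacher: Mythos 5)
Your outline is essentially the paper's proof: the classification input (\cref{LemClassificationN}, resp.\ the hypotheses in the sphere case) gives that every admissible $N$ is $2$-connected with $\chi(N)\neq 0$, non-negative curvature operator and $\scal_N>2\Ric_N>0$ (the paper quotes the discussion of compact symmetric spaces at the end of Section~1c of Goette--Semmelmann where you give the Einstein computation; both work), and the long exact homotopy sequence of the bundle shows that $\iota_*\colon\pi_1(F)\to\pi_1(M)$ is an isomorphism. The step you single out as the ``main obstacle'', namely $\hideg(f)=\alpha(F)$, is not an obstacle and requires no family-index argument: by \cref{HigherMappingDegree} the higher degree is \emph{by definition} the index of the Dirac operator of a fiber $M_p\cong F$ twisted by $\mathcal{L}(M)\vert_{M_p}$ (note $F$ is connected since $N$ is simply connected and $M$ is connected), and since $\iota_*$ is injective every component of the preimage of $M_p$ in $\widetilde{M}$ is a universal cover of $M_p$, so $\mathcal{L}(M)\vert_{M_p}\cong\mathcal{L}(M_p)$ and $\hideg(f)$ corresponds to $\alpha(F)$ under $\Cstar\pi_1(F)\cong\Cstar\pi_1(M)$. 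The paper packages exactly this covering-space argument as condition (T2) of \cref{RigidityThmHigherDergeeRosenberg} (the isomorphism supplies the required retraction on fundamental groups) and invokes that corollary instead of \cref{ThmA} directly; by \cref{ConnectionTT1T2} the two formulations are interchangeable, so this is a difference of packaging, not of substance. Two points you should still supply: first, the spin-map hypothesis of \cref{ThmA} must be checked --- it holds because $N$, being $2$-connected, has vanishing first and second Stiefel--Whitney classes, and $M$ is spin, so $w_i(TM)=f^*w_i(TN)$ for $i=1,2$; second, in the parenthetical enlargeable case you need the Hanke--Schick result in its rational form ($\alpha(F)$ non-zero after tensoring with $\Q$), since $\alpha(F)\neq 0$ alone would not preclude multiplication by the integer $\chi(N)$ killing a torsion class. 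With these additions your argument coincides with the paper's proof.
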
 
\subsection{Outline of the proof of \cref{ThmA}}
Let us first analyze the proof of the classical theorem of \textcite[Theorem 2.4]{Goette2000}. The key ingredient is the twisted spinor bundle $\SpinBdl \coloneqq \SpinBdl M\otimes f^* \SpinBdl N\to M$, which contains all necessary information about the map~$f$ and the manifolds~$M$ and~$N$. The proof divides into three steps. At first, the estimate  
\begin{equation}
  \Dirac^2 \geq \nabla^* \nabla + \tfrac{1}{4}\brackets{\scal_M-\scal_N\op f}
\end{equation}
holds by using the Schrödinger--Lichnerowicz formula together with the non-negativity of the curvature operator of~$N$ and the fact that~$f$ is area non-increasing. Second, the Atiyah-Singer index theorem yields the index formula 
\begin{equation} 
  \ind \Dirac = \chi\brackets{N}\cdot \Adeg \brackets{f}, \quad \text{with}
  \Adeg\brackets{f}\coloneqq \brackets[\big]{\hat{A}\brackets{M}f^*\omega} \sqbrackets{M}.
\end{equation}
Here $\omega$ is the fundamental class of~$N$ in cohomology and~$\sqbrackets{M}$ the fundamental class of~$M$. Third, the non-vanishing of the Euler characteristic of~$N$ and the $\hat{A}$-degree gives, combined with the index formula, a non-trivial $\Dirac$-harmonic section~$u$. Inserting~$u$ into the estimate for~$\Dirac^2$ gives that~$u$ is even parallel. Using again the estimate for~$\Dirac^2$ yields the extremality and rigidity statement.\medbreak
Let us start the outline of the proof of \cref{ThmA}. The spin map~$f\colon M\to N$ gives rise to a graded Real $\ComplexCl_{n+k,n}$-linear Dirac bundle $\SpinBdl M\tensgr f^*\SpinBdl N$ (see \cref{AlinearDiracOperatorsSubsection}). This is the spinor bundle associated to the vector bundle $TM\oplus f^*TN$ equipped with the indefinite metric $g\oplus \brackets{-f^*\overline{g}}$ (see \cref{SpinStructureSubsection}). This vector bundle is isomorphic as a Dirac bundle to some copies of the bundle~$\SpinBdl$ used by \textcite{Goette2000}, but carries additionally the $\ComplexCl_{n+k,n}$-linear structure. Motivated by the definition of the Rosenberg index, we perform an additional twist with the Mishchenko bundle of~$M$ (see \cref{MishchenkoBundleSubsection}) 
\begin{equation}
  \mathcal{L}\coloneqq \widetilde{M}\times_{\pi_1\brackets{M}} \Cstar \pi_1\brackets{M} \to M. 
\end{equation}
Here $\widetilde{M}$ is the universal cover of~$M$ and $\Cstar \pi_1\brackets{M}$ the maximal group $\Cstar$-algebra of the fundamental group of~$M$. The Mishchenko bundle is equipped with a canonical flat connection. This twisted bundle carries the structure of a graded Real $\ComplexCl_{n+k,n} \tensgr\Cstar \pi_1\brackets{M}$-linear Dirac bundle with induced Dirac operator $\DL$. The classical estimates for $\Dirac^2$ generalize to 
\begin{equation}
  \DL^2 \geq \nabla^* \nabla + \tfrac{1}{4}\brackets{\scal_M-\scal_N\op f}.
\end{equation}
A cut-and-paste principle for the higher index, combined with the Poincaré--Hopf lemma, leads to an analogous index formula for~$\DL$ as for~$\Dirac$ in the classical proof.
\begin{prethm}[\cref{IndexThm} and \cref{HigherMappingDegree}] \label{ThmB}
  The higher degree of a spin map~$f\colon M\to N$ is defined as the higher index of the Dirac operator of the spinor bundle of the fiber~$M_p$ over a regular value~$p$ of~$f$ twisted by~$\mathcal{L}$ restricted to~$M_p$. Then the following holds
  \begin{equation}
    \ind \DL = \chi\brackets{N} \cdot \hideg \brackets{f} \in \KO_k \brackets{\Cstar \pi_1\brackets{M}}.
  \end{equation}
\end{prethm}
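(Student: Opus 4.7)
The plan is to mirror the classical Goette--Semmelmann argument geometrically, replacing the Atiyah--Singer step (which is not directly available for higher indices) by a Poincar\'e--Hopf localization of $\ind \DL$ in the spirit of Callias. A useful observation is that the $\ComplexCl_{n+k,n}$-linear structure on $\SpinBdl M \tensgr f^*\SpinBdl N$ is built from the indefinite metric $g\oplus \brackets{-f^*\overline{g}}$, so Clifford multiplication by a section of $f^*TN$ has \emph{positive} square and is therefore well-suited to a Callias-type perturbation.

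First I would pick a smooth vector field $X$ on $N$ with only non-degenerate isolated zeros and arrange, via a small generic perturbation, that every zero $p_i$ of $X$ is a regular value of $f$. The preimages $M_{p_i}\coloneqq f^{-1}\brackets{p_i}\subset M$ are then closed $k$-dimensional submanifolds, and $\sum_i \varepsilon_{p_i} = \chi\brackets{N}$ by Poincar\'e--Hopf, where $\varepsilon_{p_i}\in\set{\pm 1}$ denotes the local degree of $X$ at $p_i$. Pulling back gives a section $f^*X$ of $f^*TN\subset TM\oplus f^*TN$, and, using the Clifford action on $\SpinBdl M\tensgr f^*\SpinBdl N\tensgr \mathcal{L}$, I would form the Callias-type perturbation
\begin{equation*}
  \mathcal{D}_t \coloneqq \DL + t\cdot c\brackets{f^*X}, \qquad t\in\mathbb{R}.
\end{equation*}
Homotopy invariance of the higher index gives $\ind \mathcal{D}_t = \ind \DL$ for all $t$, while for large $t$ the zeroth-order term $t^2|f^*X|^2\cdot\mathrm{id}$ produced by $c\brackets{f^*X}^2$ dominates the curvature remainder outside a neighborhood of the disjoint union of the $M_{p_i}$, so $\mathcal{D}_t^2$ is uniformly positive there. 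An excision argument for the higher index in the Hilbert $\Cstar\pi_1\brackets{M}$-module framework then localizes $\ind \mathcal{D}_t$ to a sum of contributions concentrated on tubular neighborhoods of the $M_{p_i}$.

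On each such tubular neighborhood $\nu\brackets{M_{p_i}}\cong M_{p_i}\times D^n$, the Dirac bundle splits as an external tensor product of the fiber Dirac bundle twisted by $\mathcal{L}|_{M_{p_i}}$ and a locally trivial Clifford module in the normal directions, and $\mathcal{D}_t$ splits accordingly. The normal factor is a Euclidean Callias-type operator whose higher index is $\varepsilon_{p_i}\in\mathbb{Z}$, while the fiber factor contributes $\ind\brackets{\DL|_{M_{p_i}}}$, which equals $\hideg\brackets{f}$ by definition and is independent of $p_i$ by bordism invariance of the higher index (any two regular fibers of $f$ are cobordant inside~$M$). Summing yields $\ind \DL = \sum_i \varepsilon_{p_i}\cdot\hideg\brackets{f} = \chi\brackets{N}\cdot\hideg\brackets{f}$. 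The main technical obstacle is this combined localization/splitting step: establishing, in the $KO$-theoretic Hilbert $\Cstar\pi_1\brackets{M}$-module setting, both the excision reducing the global index to local contributions at the $M_{p_i}$ and the product formula identifying each local contribution with $\varepsilon_{p_i}\cdot\hideg\brackets{f}$. This calls for either a higher Callias-type index theorem (in the vein of Bunke and of Cecchini--Zeidler) or a coarse cut-and-paste argument exploiting additivity of the higher index on partitioned manifolds; either route is available but demands careful adaptation to the Real $\ComplexCl_{n+k,n}$-linear setting here.
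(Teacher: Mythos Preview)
Your proposal is correct and follows essentially the same route as the paper. The paper also chooses a vector field on $N$ with nondegenerate zeros at regular values of $f$, perturbs $\DL$ by Clifford multiplication with its pullback (using exactly your observation that the indefinite signature makes $\overline{c}\brackets{f^*X}$ self-adjoint with positive square), and then applies the cut-and-paste principle of Cecchini \cite{Cecchini2020} together with the Poincar\'e--Hopf lemma to localize the index to the regular fibers; the Euclidean Callias contribution at each zero is computed explicitly to be $\pm 1$, and bordism invariance identifies all fiber indices with $\hideg\brackets{f}$.
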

A non-vanishing higher index of an $\A$-linear Dirac operator~$\Dirac$ does not necessarily give rise to a non-trivial element in its kernel. The functional calculus can be used (\cref{DiracInvertible} and \cref{AlmostHarmonicExistence}) to produce a family of \textit{almost $\Dirac$-harmonic} smooth sections $\set{\ue}_{\epsilon>0}$, i.e.\ that for all $\epsilon>0$ and all $j\geq 1$ 
\begin{equation}
  \Ltwonorm{\ue}=1 \quad \text{and} \quad \Ltwonorm[\big]{\Dirac^j \ue}< \epsilon^j
\end{equation}
hold. The previous estimates on the $\Ltwoblanc$-norm, together with the Schrödinger--Lichnerowicz formula, directly imply that a non-vanishing Rosenberg index is an obstruction to a positive scalar curvature metric on a closed spin manifold. To prove extremality and rigidity statements, the estimates on the $\Ltwoblanc$-norm seem too weak to be geometrically useful. The main part of the proof of \cref{ThmA} is to develop a technique (\cref{SectionAlmostHarmonicSections}) that shows in the extremal geometric situation that the family $\set{\ue}_{\epsilon>0}$ of almost $\Dirac$-harmonic sections is even \textit{almost constant}. This means there exist constants~$C,r>0$ and an element $a\in A^+$ such that 
\begin{equation}
  \Anorm[\big]{a-\scalarproduct[\big]{\ue\brackets{p}}{\ue\brackets{p}}_{p}}<C\epsilon^r
\end{equation}
holds for all $p\in M$ and all $\epsilon\in \brackets{0,1}$. A summary of the technical results: 
\begin{prelem}[\cref{DiracInvertible}, \cref{AlmostHarmonicExistence}, \cref{AlmostHarmonicImpliesAlmostParallel}, \cref{AlmostConstantLem} and \cref{DefAlmost}] \label{LemC}
  Let $\A$ be a graded Real unital $\Cstar$-algebra and $\SpinBdl$ a graded Real $\A$-linear Dirac bundle over a closed connected Riemannian manifold~$M$ with induced Dirac operator~$\Dirac$. If the higher index of~$\Dirac$ does not vanish in~$\KO_0\brackets{\A}$, there exists a family~$\{\ue\}_{\epsilon>0}$ of almost $\Dirac$-harmonic sections. If, moreover, $\Ltwonorm{\nabla \ue}< \epsilon$ holds for all $\epsilon>0$, the family~$\{\ue\}_{\epsilon>0}$ is almost constant.
\end{prelem}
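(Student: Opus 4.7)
My plan is to exploit the continuous functional calculus for~$\Dirac$, viewed as a self-adjoint regular $\A$-linear operator on the Hilbert $\A$-module $E=L^2(\SpinBdl)$. First, non-vanishing of $\ind\Dirac\in\KO_0\brackets{\A}$ forces $0\in\sigma\brackets{\Dirac}$: otherwise the bounded transform $\Dirac\brackets{1+\Dirac^2}^{-1/2}$ would be an invertible $\A$-linear self-adjoint Fredholm operator, deformable through invertibles to $\pm 1$, and the index would vanish. Hence for every non-negative smooth bump $\chi_\epsilon$ with $\chi_\epsilon(0)>0$ and $\operatorname{supp}\chi_\epsilon\subseteq[-\epsilon,\epsilon]$ the operator $\chi_\epsilon\brackets{\Dirac}$ is non-zero, and I may pick $v\in E$ with $\chi_\epsilon\brackets{\Dirac}v\neq 0$ and set $\ue=\chi_\epsilon\brackets{\Dirac}v/\Ltwonorm{\chi_\epsilon\brackets{\Dirac}v}$. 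The pointwise inequality $\brackets{t^j\chi_\epsilon(t)}^2\leq\epsilon^{2j}\chi_\epsilon(t)^2$ translates via functional calculus into the operator positivity $\chi_\epsilon^2\brackets{\Dirac}-\epsilon^{-2j}\brackets{t^j\chi_\epsilon}^2\brackets{\Dirac}\geq 0$, from which $\Ltwonorm{\Dirac^j\ue}\leq\epsilon^j$ for every $j\geq 1$. Iterated elliptic regularity for the $\A$-linear operator~$\Dirac$ upgrades this to $\ue\in H^k$ for every~$k$, so $\ue$ is smooth and $\{\ue\}_{\epsilon>0}$ is almost $\Dirac$-harmonic.

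Assume in addition $\Ltwonorm{\nabla\ue}<\epsilon$ and consider the smooth $\A^+$-valued function $\fe{p}=\scalarproduct{\ue(p)}{\ue(p)}_{p}$. A Gårding-type estimate for the elliptic $\A$-linear operator~$\Dirac$ gives uniform Sobolev bounds $\|\ue\|_{H^k}\leq C_k\brackets{\Ltwonorm{\ue}+\Ltwonorm{\Dirac^k\ue}}\leq C_k$ for $\epsilon\in(0,1)$. Leibniz and the Cauchy--Schwarz inequality for $\A$-valued inner products then yield both $\Ltwonorm{\nabla\feblanc}\leq C\epsilon$ and uniform $H^m$-bounds on~$\feblanc$ for every~$m$. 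Interpolating between $L^2$ and $H^m$ and applying the Sobolev embedding $H^k\hookrightarrow C^0$ for $k>\dim M/2$ produces $\|\nabla\feblanc\|_{C^0}\leq C'\epsilon^r$ for some $r\in(0,1)$. Integrating $\nabla\feblanc$ along minimising geodesics bounds the $\A$-valued oscillation of~$\feblanc$ on~$M$ by $\operatorname{diam}(M)\cdot C'\epsilon^r$, so the $\A$-valued average $a_\epsilon=\brackets{\operatorname{vol}(M)}^{-1}\int_M\feblanc\,d\mu\in\A^+$ satisfies $\Anorm{a_\epsilon-\feblanc(p)}\leq C''\epsilon^r$ for all $p\in M$. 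Choosing the $\ue$ coherently (e.g.\ via a fixed test section~$v$) forces $\{a_\epsilon\}$ to be Cauchy in~$\A$, and its limit $a\in\A^+$ absorbs the remaining $\epsilon$-dependence into the final error.

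The principal obstacle I anticipate is adapting the classical analytic toolkit---elliptic regularity, Sobolev embedding, interpolation, and Poincaré-type estimates---to Hilbert $\A$-module valued sections, since no $L^p$-theory for $p\neq 2$ is available over non-commutative $\Cstar$-algebras. Everything must be phrased in $L^2$-based Sobolev terms, and the non-commutativity of~$\A$ is handled through positivity arguments from the functional calculus such as the operator inequality underlying the $\Ltwonorm{\Dirac^j\ue}$ bounds. A further subtlety is extracting a single $\epsilon$-independent element~$a\in\A^+$ in the absence of compactness in~$\A^+$; this is ultimately forced by the Sobolev/interpolation structure but requires care in how the family~$\{\ue\}$ is constructed.
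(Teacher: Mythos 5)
Your construction of the almost $\Dirac$-harmonic family is essentially the paper's own (\cref{DiracInvertible} and \cref{AlmostHarmonicExistence}): non-vanishing of the index forces $0\in\sigma\brackets{\Dirac}$, one applies spectral cut-offs $\chi_\epsilon\brackets{\Dirac}v$ normalized in $\Ltwoblanc$, bounds $\Ltwonorm{\Dirac^j\ue}$ by positivity in the functional calculus, and gets smoothness from elliptic regularity and the Sobolev inequality. Two small remarks there: the justification that an invertible odd self-adjoint operator has vanishing index via a ``deformation through invertibles to $\pm1$'' is imprecise (an odd operator cannot be deformed to the even element $\pm1$; the correct argument is that the pair becomes degenerate, which is what the paper cites from Roe), and \cref{DefAlmost} asks for the strict bound $<\epsilon^j$, which you get by supporting $\chi_\epsilon$ in $\brackets{-\epsilon/2,\epsilon/2}$ as the paper does. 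Where you genuinely diverge is the passage from $\Ltwonorm{\nabla\ue}<\epsilon$ to almost constancy. After the common step of uniform $C^m$-bounds via elliptic estimates (\cref{AlmostHarmonicImpliesAlmostParallel}), the paper applies Moser iteration to the scalar functions $p\mapsto\rho\brackets{\abs{\nabla\ue}^2_p}^{1/2}$ attached to states $\rho$ (\cref{NablaUeSmallinInfinity}), and then the Poincaré inequality (\cref{AlmostConstantLem}); you instead interpolate (Gagliardo--Nirenberg) between the $\Ltwoblanc$-smallness and the uniform higher Sobolev bounds of $\nabla\scalarproduct{\ue}{\ue}$ and control the oscillation by integrating along geodesics. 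This is a viable and arguably more elementary route, but the interpolation and embedding must actually be justified for $\A$-valued data; the cleanest way is the same state reduction the paper uses (the relevant quantities are self-adjoint, $\Anorm{a}=\max_\rho\abs{\rho\brackets{a}}$, the scalar inequalities hold with constants independent of $\rho$, and $\rho$-compositions are dominated in $\Ltwoblanc$ and $H^m$ by the module-valued norms), rather than an unproved Hilbert-module interpolation theorem. Done this way, your oscillation bound is centered exactly at $\overline{u}_\epsilon$, which is precisely what \cref{DefAlmost} requires.

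The one step that does not hold as written is the last one: extracting a single $\epsilon$-independent $a\in\A^+$ by ``choosing the $\ue$ coherently via a fixed test section''. A fixed $v$ need not satisfy $\chi_\epsilon\brackets{\Dirac}v\neq0$ for all $\epsilon$ (it may be orthogonal to the low-energy spectral part), and there is no reason the averages $a_\epsilon$ form a Cauchy family---the bounded set they lie in is not norm-compact in $\A$. Fortunately this step is also unnecessary: the notion of almost constant that \cref{LemC} asserts is \cref{DefAlmost}, whose reference point is the $\epsilon$-dependent average $\overline{u}_\epsilon$, so you should simply delete the Cauchy/limit claim and stop at the oscillation estimate.
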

In the proof of \cref{LemC}, we use Moser iteration---similarly as in \cite{Ammann2007}---to deduce from $\Ltwonorm{\nabla \ue}<\epsilon$ estimates on the $\Linftyblanc$-norm (\cref{MoserIteration} and \cref{NablaUeSmallinInfinity}), and the Poincaré inequality to show that~$\{\ue\}_{\epsilon>0}$ is almost constant (\cref{AlmostConstantLem}). \medbreak
In the situation of \cref{ThmA}, \cref{LemC} together with the index formula in \cref{ThmB} and the estimate for~$\DL^2$ yields the existence of a family of almost constant sections~$\{\ue\}_{\epsilon>0}$. This family replaces the $\Dirac$-harmonic section~$u$ in the classical proof of \textcite{Goette2000}. This property of being almost constant is sufficient to prove the extremality and rigidity statement similarly as in the classical proof.
\subsection{A direct proof of a classical result about zero Ricci curvature}
  Let~$M$ be a closed connected spin manifold with non-negative scalar curvature. Denote the spin Dirac operator by~$\Dirac$. The implications in the following diagram hold: 
  \begin{equation} \label{OverviewImplications}
          \begin{tikzcd} 
            \ind \Dirac \neq 0  \arrow[Rightarrow]{d}{} \arrow[Rightarrow]{rr}{}
            && \Ric \equiv 0 \arrow[Rightarrow]{rr}
            && \scal \equiv 0\\
            \alpha \brackets{M} \neq 0 
            \arrow[Rightarrow]{d}
            \arrow[equal]{rrrr}{\text{direct proof of \textcite{Schick2021}}}
            \arrow[Rightarrow, thick]{rru}{(\star)}
            \arrow[Rightarrow, to path={ -- (\tikztostart.east) -| (\tikztotarget)},
            rounded corners=0pt]{rrrru}
            && \textcolor{white}{\overset{X}{a}} \arrow[Rightarrow]{u}{}&& \textcolor{white}{.} \\
            \text{$M$ admits no PSC}\arrow[equal, to path={ -- (\tikztostart.east) -| (\tikztotarget)},
            rounded corners=0pt]{rru}\arrow[equal]{rr}{\text{Bourguignon}} \textcolor{white}{.} \arrow[equal, swap]{rr}{\text{\cite[see e.g.][]{Kazdan1975a}}}&&\textcolor{white}{.} && &&
          \end{tikzcd}
    \end{equation}
    If the classical index of~$\Dirac$ does not vanish, there exists by $\scal\geq 0$ a non-trivial constant section~$u$, and a direct calculation shows that~$M$ is Ricci flat. If the Rosenberg index does not vanish, we obtain a family of almost constant sections by \cref{LemC} and the Schrödinger--Lichnerowicz formula. The existence of such a family is enough to prove that~$M$ is Ricci flat, hence we obtain a direct proof of implication~$(\star)$ in the previous diagram. 
    %
%\begin{samepage}
\subsection{Structure}
  In \cref{PreliminariesSection}, we introduce Dirac operators linear over $\Cstar$-algebras and their higher indices, spin structures associated to vector bundles with signature, and the Rosenberg index of a spin manifold. In \cref{SectionAlmostHarmonicSections}, we proof \cref{LemC}. We apply in \cref{SectionZeroRicci} the new developed technique and give a direct proof of the classical result about the Rosenberg index and the Ricci curvature. In \cref{SectionExtensionGoetteSemmelmann}, we proof the extremality and rigidity statement (\cref{ThmA}) and the index theorem (\cref{ThmB}). There we also derive the theorem of \textcite{Goette2000} and treat some special versions of \cref{ThmA} involving the Rosenberg index of the fiber directly (\cref{RigidityThmHigherDergeeRosenberg}). In the end of \cref{SectionExtensionGoetteSemmelmann}, we prove the rigidity statement for fiber bundles (\cref{PreCor}) over even dimensional spheres, quaternionic Grassmannians and the Cayley projective plane.\par
%\end{samepage}
%
\subsection{Acknowledgments}
I thank my advisor Rudolf Zeidler for introducing me to this interesting problem and for sharing his knowledge in many discussions. I also thank Bernd Ammann, Renato Ghini Bettiol and Jonathan Glöckle for valuable conversations, as well as the anonymous referee for useful comments on the manuscript. \par
Funded by the European Union (ERC Starting Grant 101116001 – COMSCAL)\footnote{
  Views and opinions expressed are however those of the author(s) only and do not necessarily reflect those of the European Union or the European Research Council. Neither the European Union nor the granting authority can be held responsible for them.
  %
  %\par \textit{Notice to publishers:} This work was co-funded by the European Union under the Horizon Europe grant 101116001. As set out in the Grant Agreement, beneficiaries must ensure that at the latest at the time of publication, open access is provided via a trusted repository (e.g.\ arXiv) to the published version or the final peer-reviewed manuscript accepted for publication under the latest available version of the Creative Commons Attribution International Public Licence (CC BY) or a licence with equivalent rights.
  %
}
and by the Deutsche Forschungsgemeinschaft (DFG, German Research Foundation) – Project-ID 427320536 – SFB 1442, as well as under Germany’s Excellence Strategy EXC 2044 390685587, Mathematics Münster: Dynamics–Geometry–Structure.
\section{Preliminaries} \label{PreliminariesSection}
In this section, we summarize notations and definitions including $\A$-linear Dirac operators and their higher indices, spin structures for vector bundles with indefinite metric, the Mishchenko line bundle and the Rosenberg index of a closed spin manifold. Furthermore, we give an overview of some important properties round about these concepts. 
\subsection{\texorpdfstring{$\A$}{}-linear Dirac operator and its higher index} \label{AlinearDiracOperatorsSubsection}
Let $\brackets{M,g}$ be an $n$-dimensional complete Riemannian manifold and~$\A$ a unital graded Real $\Cstar$-algebra with norm denoted by $\Anorm{\placeholder}$. At first, we generalize the definition of Dirac bundles and their induced Dirac operators \cite[see][Chapter 2]{Lawson1989} to the corresponding $\A$-linear structures: 
\begin{defi}
  An \textit{$\A$-linear Dirac bundle} over~$M$ is a bundle of finitely generated projective Hilbert $\A$-modules $\SpinBdl \to M$ equipped with an $\A$-valued metric, a compatible $\A$-linear connection and a parallel bundle map $c:TM\to \End_{\A}\brackets{\SpinBdl}$, called the \textit{Clifford multiplication}, such that $c(w)$ is skew-adjoint and satisfies $c(w)^2=-g\brackets{w,w}$ for all $w\in TM$. We call an $\A$-linear Dirac bundle \textit{graded} (\textit{Real}) if it is furnished with a grading (Real structure), which is compatible with the grading (Real structure) of~$\A$, the metric, the connection and the Clifford multiplication~$c$, i.e.~$c\brackets{w}$ is odd (Real) for all $w\in TM$. If a grading and a Real structure exist, they have to be compatible with each other. The induced \textit{($\A$-linear) Dirac operator} is defined as
  \begin{equation}
    \Dirac_{\SpinBdl}\colon \Ccinfty{M,\SpinBdl} \To{\text{ connection }} \Ccinfty{M,T^*M\tens \SpinBdl} \To{\text{ metric, Clifford multiplication }} \Ccinfty{M,\SpinBdl}.
  \end{equation}
\end{defi}
The Dirac operator induced by a (graded Real) $\A$-linear Dirac bundle is, by the same considerations as in the classical case \cite{Lawson1989}, an elliptic formally self-adjoint (odd Real) differential operator of first order. Details about bundles of finitely generated projective Hilbert $\Cstar$-modules are given in \cite{Mishchenko1980}, \cite{Schick2005} and \cite{Ebert2016}, and about $\A$-linear Dirac bundles in \mbox{\cite[\S 7]{Stolz}}.
\begin{const}[Twisted Dirac bundle] \label{ConstructionTwistBundle}
  Let $\SpinBdl \to M$ be a graded Real $\A$-linear Dirac bundle, $\B$ a unital graded Real $\Cstar$-algebra and $E\to M$ a graded Real bundle of finitely generated projective Hilbert $\B$-modules. Taking the fiberwise graded exterior tensor product of Hilbert $\Cstar$-modules with respect to the graded maximal tensor product between the $\Cstar$-algebras $\A$ and~$\B$ defines a graded Real $\A \tensgr \B$-linear Dirac bundle $\SpinBdl \tensgr E \to M$. Here we equip $\SpinBdl \tensgr E \to M$ with the tensor connection, the Clifford multiplication induced from the Clifford multiplication on~$S$, and the Real structure induced from the Real structures on~$\SpinBdl$ and~$E$. If the $\Cstar$-algebra~$\B$ and the Hilbert~$\B$-module~$E$ are not graded, we equip them in the previous construction with the trivial grading. 
\end{const}
\begin{nota} 
  Let $\SpinBdl \to M$ be an $\A$-linear Dirac bundle. Denote by $\scalarproduct{\placeholder}{\placeholder}_p$ the $\A$-valued metric in the fiber of $\SpinBdl$ over a point $p\in M$. Furthermore, define for compactly supported smooth sections $u, v\in \Ct^\infty_{c}\brackets{M,\SpinBdl}$, $p\in M$ and $k,m\in \N_0$ the following scalar products and norms:
  \begin{equation}
    \scalarproduct{u}{v}_p \coloneqq \scalarproduct{u_p}{v_p}_p \in \A \quad 
    \abs{u}_p \coloneqq \scalarproduct{u}{u}_p^{1\slash 2} \in \A^+ \quad 
    \norm{u}_p \coloneqq \Anorm[\big]{\scalarproduct{u}{u}_p^{1\slash 2}} \in \R
  \end{equation}
  \begin{center}
    \begin{tabular}{LCLLLCL}
      \Ltwoscalarproduct{u}{v} &\hspace{-0.3cm}\coloneqq&\hspace{-0.2cm} \int_M \scalarproduct{u}{v}_p \intmathd p \in \A 
      && \Hscalarproduct{k}{u}{v} &\hspace{-0.3cm}\coloneqq&\hspace{-0.2cm} \sum_{j=0}^k \Ltwoscalarproduct[\big]{\nabla^j u}{\nabla^j v} \in \A \\
      \Ltwoabs{u} &\hspace{-0.3cm}\coloneqq&\hspace{-0.2cm} \Ltwoscalarproduct{u}{u}^{1/2} \in \A^+ 
      && \Habs{k}{u} &\hspace{-0.3cm}\coloneqq&\hspace{-0.2cm} \Hscalarproduct{k}{u}{u}^{1/2} \in \A^+ \\
      \Ltwonorm{u} &\hspace{-0.3cm}\coloneqq&\hspace{-0.2cm} \Anorm{\Ltwoscalarproduct{u}{u}^{1/2}} \in \R
      && \Hnorm{k}{u}&\hspace{-0.3cm}\coloneqq&\hspace{-0.2cm} \Anorm{\Hscalarproduct{k}{u}{u}^{1/2}} \in \R \\ 
      \Inftynorm{u} &\hspace{-0.3cm}\coloneqq&\hspace{-0.2cm} \max_{p\in M} \norm{u}_p\in \R
      && \norm{u}_{C^m}&\hspace{-0.3cm}\coloneqq&\hspace{-0.2cm} \sum_{j=0}^m \Inftynorm[\big]{\nabla^j u}\in \R
    \end{tabular}
  \end{center}
  Here $\A^+$ denotes the set of positive elements in the $\Cstar$-algebra~$\A$, and $\Ltwoscalarproduct{\nabla^j u}{\nabla^j v}$ is analogously defined as $\Ltwoscalarproduct{u}{v}$ with the induced $\A$-valued inner product in the fibers of $T^*M^j \tens \SpinBdl$. Be aware of the fact that even if $\Habs{k}{\placeholder}$ is called a norm \cite[see][]{Lance1995}, it does not necessarily fulfill the triangle inequality. To prevent the risk of confusion with the~$\A$- and $\R$-valued norms, we denote any $\R$-valued norm by~$\norm{\placeholder}$ and any $\A$-valued norm by~$\abs{\placeholder}$. The Hilbert $\A$-module $\Hsp{k}{M,\SpinBdl}$ is defined as the closure of the compactly supported smooth sections $\Ct_c^\infty\brackets{M,\SpinBdl}$ in the norm $\Hnorm{k}{\placeholder}$. As classically, we write $\Ltwo{M,\SpinBdl}$ for $\Hsp{0}{M,\SpinBdl}$.
\end{nota}
The next goal is to define the higher index of suitable $\A$-linear differential operators, including certain $\A$-linear Dirac operators and perturbed Dirac operators. The latter plays an important role in the proof of the index theorem (\cref{IndexThm}) in \cref{SectionExtensionGoetteSemmelmann}.\par
Fix a graded Real $\A$-linear Dirac bundle $\SpinBdl\to M$ together with an odd Real elliptic formally self-adjoint $\A$-linear differential operator of first order $B \colon \Ct^\infty_c \brackets{M,\SpinBdl}\to \Ct^\infty_c \brackets{M,\SpinBdl}$. The $\Ltwoblanc$-closure, denoted by $\mathrm{B} \colon \dom\brackets{\mathrm{B}} \to \Ltwo{M,\SpinBdl}$, is self-adjoint and regular \cite[][Theorem 1.14]{Ebert2016} and its (possibly non-discrete) spectrum is Real \cite[][Proposition 1.10]{Ebert2016}. This gives rise to a functional calculus 
\mapdefoneline{\Phi_{\mathrm{B}}}{\Ct\brackets[\big]{\overline{\R}}}{\Lin_{\A}\brackets{\Ltwo{M,\SpinBdl}}}{f}{f\brackets{\mathrm{B}}}
with properties as listed in \cite[Theorem 1.19]{Ebert2016}. Here $\Ct\brackets[\big]{\overline{\R}}$ denotes the graded Real $\Cstar$-algebra of continuous functions $f\colon \R \to \C$ such that the limits to $\pm \infty$ exist, and $\Lin_{\A}\brackets{\Ltwo{M,\SpinBdl}}$ denotes the graded Real $\Cstar$-algebra of adjointable operators on $\Ltwo{M,\SpinBdl}$. \par
Assume furthermore, that the unbounded operator~$\mathrm{B}$ is \textit{Fredholm}, i.e.\ that the adjointable operator $\mathrm{B}\brackets{1+\mathrm{B}^2}^{-1/2}$, defined via the functional calculus of~$\mathrm{B}$, is invertible modulo a compact operator. This data gives directly per definition rise to a class in the picture of the Real K-theory of~$\A$ given in \cite[Definition 3.4]{Ebert2016}. We denote this class by $\ind \mathrm{B}$ and call it the \textit{higher index} of~$\mathrm{B}$.
Following \textcite[\S 9]{Stolz}, we give a concrete construction of the higher index in the Fredholm picture $\KO\brackets{\A}$ of the Real K-theory of~$\A$ \cite[see][Lecture 12]{Roe}. Elements of $\KO_0\brackets{\A}$ are homotopy classes of Fredholm pairs~$\brackets{F,E}$ consisting of a countably generated graded Real Hilbert $\A$-module and an odd Real operator~$F\in \Lin_{\A}\brackets{E}$ with $F-F^*,\;F^2-1 \in \Kom_{\A}\brackets{E}$. Note that the Fredholm picture is equivalent to the KK-theoretic picture~$\KK\brackets{\C,\A}$ used by \textcite[\S 9]{Stolz}. \par
The unbounded operator~$\mathrm{B}$ is Fredholm if and only if it has an \textit{essential spectral gap}, i.e.\ there exists an $\epsilon>0$ such that~$f\brackets{\mathrm{B}}$ is a compact operator for all continuous~$f\colon \R\to \R$ with support in $\brackets{-\epsilon, \epsilon}$ \cite[][Lemma 2.19]{Ebert2016}. Fix such an $\epsilon>0$. For a continuous odd function $g\colon \R \to \sqbrackets{-1,1}$ with $g\brackets{x}=\sgn\brackets{x}$ on $\R \setminus \brackets{-\epsilon,\epsilon}$, the operator $g\brackets{\mathrm{B}}^2-1$ is compact, hence we obtain a Fredholm pair $\brackets{g\brackets{\mathrm{B}},\Ltwo{M,\SpinBdl}}$. This defines a class in the Fredholm picture of the Real K-theory of~$\A$, which is independent of~$\epsilon$ and~$g$, and matches by definition with the higher index of $\mathrm{B}$
\begin{equation}
  \ind \mathrm{B} = \sqbrackets[\Big]{\brackets[\big]{g\brackets{\mathrm{B}},\Ltwo{M,\SpinBdl}}} \in \KO_0\brackets{\A}\cong \KK_0\brackets{\C,\A}.
\end{equation} 
The higher index is a cut-and-paste invariance as explained in \cite[Section2.3]{Cecchini2020} and satisfies sum and product formula as mentioned in \cite[Appendix B]{Zeidler2022}. Furthermore, the higher index of a Dirac operator induced by a graded Real $\A$-linear Dirac bundle is independent of the involved metrics by bordism invariance. The latter is a consequence of the partitioned manifold index theorem \cite[see for instance][Appendix A]{Zeidler2022}. \par
For the Fredholmness of~$\mathrm{B}$, it is sufficient that $\mathrm{B}^2$ is \textit{uniformly positive at infinity} \cite[][Theorem 2.41]{Ebert2016}, i.e.\ there exists a compact subset~$K\subset M$ and a constant~$c>0$ such that
\begin{equation} \label{DefUniformlypositiveatInifnityEq}
  \Ltwoscalarproduct[\big]{\mathrm{B}^2 u}{u}\geq c\Ltwoscalarproduct{u}{u}, \quad \forall u\in \text{dom}\brackets[\big]{\mathrm{B}^2} \text{ with } \supp\brackets{u}\cap K=\emptyset.
\end{equation}
If $\mathrm{B}^2$ is \textit{uniformly positive}, i.e.\ \cref{DefUniformlypositiveatInifnityEq} holds for $K=\emptyset$, the higher index vanishes: 
\begin{lem} \label{DiracInvertible}
  Let $\SpinBdl\to M$ be a graded Real $\A$-linear Dirac bundle and~$\mathrm{B}\colon \dom \brackets{\mathrm{B}}\to \Ltwo{M,\SpinBdl}$ the $\Ltwoblanc$-closure of an odd Real elliptic formally self-adjoint differential operator of first order. Then the following holds:
  \begin{myenumi}
    \item 
      If $\mathrm{B}^2$ is uniformly positive, $\mathrm{B}$ is invertible. 
    \item
      If $\mathrm{B}$ is invertible, $\mathrm{B}$ is Fredholm and its higher index vanishes in~$\KO_0\brackets{\A}$.
  \end{myenumi}
\end{lem}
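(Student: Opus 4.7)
The plan for part~(i) is to apply the functional calculus of the self-adjoint regular operator~$\mathrm{B}$. Self-adjointness turns the hypothesis $\Ltwoscalarproduct{\mathrm{B}^2 u}{u}\geq c\Ltwoscalarproduct{u}{u}$ on $\dom\brackets{\mathrm{B}^2}$ into $\Ltwoscalarproduct{\mathrm{B} u}{\mathrm{B} u}\geq c\Ltwoscalarproduct{u}{u}$, i.e.\ the operator inequality $\mathrm{B}^2\geq c$ on a core, which passes to the functional calculus because~$\mathrm{B}^2$ is self-adjoint. Spectral mapping then places the spectrum of~$\mathrm{B}$ inside $\R\setminus \brackets{-\sqrt{c},\sqrt{c}}$, so that $x\mapsto 1/x$ is bounded and continuous on~$\operatorname{spec}\brackets{\mathrm{B}}$. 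Applying \cite[Theorem 1.19]{Ebert2016} to any continuous extension of this function to an element of~$\Ct\brackets[\big]{\overline{\R}}$ then produces an adjointable two-sided inverse of~$\mathrm{B}$, giving~(i).

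For part~(ii), invertibility of~$\mathrm{B}$ forces $0\notin\operatorname{spec}\brackets{\mathrm{B}}$, and since the spectrum is closed there is an $\epsilon>0$ with $\operatorname{spec}\brackets{\mathrm{B}}\cap \brackets{-\epsilon,\epsilon}=\emptyset$. Any continuous $f\colon \R\to \C$ supported in $\brackets{-\epsilon,\epsilon}$ then vanishes on the spectrum, so $f\brackets{\mathrm{B}}=0$ by functional calculus, which is trivially compact. Hence~$\mathrm{B}$ has an essential spectral gap, and the criterion recalled from \cite[Lemma 2.19]{Ebert2016} just before this lemma yields that~$\mathrm{B}$ is Fredholm.

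To show that the higher index then vanishes, I will choose a continuous odd $g\colon \R\to \sqbrackets{-1,1}$ with $g\brackets{x}=\sgn\brackets{x}$ for~$\abs{x}\geq \epsilon$. Since~$\operatorname{spec}\brackets{\mathrm{B}}$ avoids $\brackets{-\epsilon,\epsilon}$, the functions $g^2$ and $\sgn^2$ agree on the spectrum, and functional calculus yields $g\brackets{\mathrm{B}}^2=1$ exactly, not merely modulo a compact. The Fredholm pair $\brackets{g\brackets{\mathrm{B}},\Ltwo{M,\SpinBdl}}$ representing~$\ind \mathrm{B}$ in the Fredholm picture from \cite[\S 9]{Stolz} is therefore degenerate, and hence $\ind \mathrm{B}=0\in \KO_0\brackets{\A}$.

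The main obstacle is being careful with the spectral theory of self-adjoint regular operators on Hilbert $\A$-modules: namely, that the weak inequality $\Ltwoscalarproduct{\mathrm{B}^2 u}{u}\geq c\Ltwoscalarproduct{u}{u}$ on the domain translates into a genuine spectral gap of~$\mathrm{B}$, and that $f\brackets{\mathrm{B}}=0$ whenever~$f$ vanishes on $\operatorname{spec}\brackets{\mathrm{B}}$. Both are standard consequences of the functional calculus of \cite[Theorem 1.19]{Ebert2016}, but should be invoked explicitly rather than imported from the scalar Hilbert-space case without comment.
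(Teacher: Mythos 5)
Your proof of part~(ii) is structurally identical to the paper's: zero avoids the spectrum, so any $f$ supported in $(-\epsilon,\epsilon)$ gives $f(\mathrm{B})=0$, hence an essential spectral gap and Fredholmness, and the normalizing function $g$ satisfies $g(\mathrm{B})^2=1$ exactly, so the Fredholm pair is degenerate and the class vanishes. For part~(i) the paper simply cites Ebert's Propositions 1.20 and 1.21, whereas you reprove that statement directly: from the quadratic-form bound $\langle \mathrm{B}^2u,u\rangle_{L^2}\geq c\,\langle u,u\rangle_{L^2}$ on $\operatorname{dom}(\mathrm{B}^2)$ you extract a spectral gap $\operatorname{spec}(\mathrm{B})\cap(-\sqrt{c},\sqrt{c})=\emptyset$ and invert $\mathrm{B}$ through the continuous functional calculus. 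This is a legitimate and self-contained alternative; what it costs is exactly the two points you flag yourself. First, the passage from the form bound to the spectral gap is not a bare "spectral mapping" statement: in the Hilbert-module setting one should argue, e.g., that if some $\lambda$ with $|\lambda|<\sqrt{c}$ were in the spectrum, then a bump function $f$ supported where $x^2<c$ and not vanishing on the spectrum has $f(\mathrm{B})\neq 0$, and for $u=f(\mathrm{B})v$ one gets $\langle\mathrm{B}^2u,u\rangle_{L^2}\leq s^2\langle u,u\rangle_{L^2}$ with $s^2<c$, contradicting uniform positivity; this uses that the kernel of the calculus consists of functions vanishing on the spectrum, which is part of the package the paper imports from Ebert. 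Second, producing the inverse needs an extension $h\in C(\overline{\mathbb{R}})$ chosen so that both $x\,h(x)$ and $h(x)\,x$ are controlled by the calculus (e.g.\ $h(x)=x/\max(x^2,c')$ with $0<c'<c$), so that $h(\mathrm{B})$ maps into $\operatorname{dom}(\mathrm{B})$ and $\mathrm{B}h(\mathrm{B})=h(\mathrm{B})\mathrm{B}=1$ on the appropriate domains; a generic continuous extension of $1/x$ does not immediately give a two-sided inverse of an unbounded operator. With these details supplied, your argument is correct and essentially amounts to reproving the cited propositions of Ebert.
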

%
%\begin{proof}[in the spectral picture]
%  The first implication holds by \cite[Lemma 2.4]{Zeidler2022} and the self-adjointness of~$\Dirac$. We will show the second implication by contradiction: Assume zero is not in the spectrum of $\Dirac$. Then the functional calculus $\Phi_{\Dirac}$ associated to $\Dirac$ as in \cite[Theorem 1.19]{Ebert2016} is homotopy equivalent to the zero map $\Cz{\R} \to \Kom_\A \brackets{\Ltwo{M,\SpinBdl}}$ via $\Phi_{s^{-1}\Dirac}$. Note, the map $\Phi_{s^{-1}\Dirac}$ is continuous in $s=0$, since for any $f\in \Cz{\R}$
%  \begin{equation}
%    \opnorm{\Phi_{s^{-1}\Dirac}\brackets{f}} = \norm{f}_{\sigma \brackets{s^{-1}\Dirac}=s^{-1}\sigma \brackets{\Dirac}} \: \xrightarrow{0\notin \sigma \brackets{\Dirac}} \: 0 \quad (s\longrightarrow 0)
%  \end{equation}
%  holds. This proves that the index vanishes in $\KO\brackets{\A}$, hence we have a contradiction. 
%\end{proof}
%
\begin{proof}
  For the first statement see \cite[Proposition 1.20 and 1.21]{Ebert2016}. Suppose~$\mathrm{B}$ is invertible. Then zero is not in the spectrum of~$\mathrm{B}$, and there exists an $\epsilon>0$ such that $\brackets{-\epsilon, \epsilon}$ does not intersect with the spectrum of~$\mathrm{B}$. Since $f\brackets{\mathrm{B}}$ just depends on the values the function~$f$ takes on the spectrum of~$\mathrm{B}$, we obtain $f\brackets{\mathrm{B}}=0$ for any function~$f$ supported in $\brackets{-\epsilon, \epsilon}$, hence~$\mathrm{B}$ has an essential spectral gap. This shows that~$\mathrm{B}$ is Fredholm. Furthermore, $g\brackets{\mathrm{B}}^2-1$ is equal to zero for a function~$g$ as in the construction of the higher index, hence $\brackets{g\brackets{\mathrm{B}},\Ltwo{M,\SpinBdl}}$ vanishes in $\KO_0\brackets{\A}$ by \cite[Lemma 12.6]{Roe}.
\end{proof}
So far, the underlying manifold was not necessarily closed. From now on assume that the manifold~$M$ is closed. Then the induced Dirac operator~$\Dirac$ of an $\A$-linear Dirac bundle $\SpinBdl\to M$ with initial domain $\Cinfty{M,\SpinBdl}$ extends for all $k\in \N_0$ to an unbounded operator $\Dirac:\Hsp{k+1}{M,\SpinBdl} \to \Hsp{k}{M,\SpinBdl}$ \cite[][Lemma 3.2]{Mishchenko1980}. Furthermore, the $\Ltwoblanc$-extended Dirac operator is obviously uniformly positive at infinity, hence Fredholm. 
%One could also show directly that~$\Dirac$ has an essential spectral gap with $\epsilon>0$ arbitrary large and the function $g\brackets{x}=\frac{x}{\brackets{1+x^2}^{1/2}}$ does the job for the definition of the higher index. 
\par
The classical elliptic regularity principle carries over to $\A$-linear Dirac operators, which means precisely that the classical Sobolev embedding theorem \cite[Chapter III Theorem 2.5 and Theorem 2.15]{Lawson1989} and the classical fundamental elliptic estimate \cite[Chapter III Theorem 5.2]{Lawson1989} generalizes as follows: 
\begin{lem} \label{ClassicalEstimatesExtension}
  Let $k,m, l\in \N_0$ with $k>m+\frac{n}{2}$ and $\SpinBdl\to M$ an $\A$-linear Dirac bundle with induced Dirac operator~$\Dirac$. There exists a constant $C$ such that for all $u\in \Hsp{l+1}{M,\SpinBdl}$ and all $v\in \Hsp{k}{M,\SpinBdl}$ the following holds:
  \begin{myenumi}
    \begin{samepage}
      \item \(\norm{v}_{C^m}\leq C \Hnorm{k}{v}\) \hfill (Sobolev inequality)
      \item \(\norm{u}_{H^{l+1}}\leq C \brackets[\big]{\norm{\Dirac u}_{H^{l}}+\norm{u}_{H^{l}}}\) \hfill (fundamental elliptic estimate)
    \end{samepage}
  \end{myenumi}
\end{lem}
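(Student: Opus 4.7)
The plan is to reduce both inequalities to their well-known scalar counterparts by localization and a partition of unity, exploiting the fact that the Hilbert $\A$-module framework supports all the classical analytic tools---Fourier transform, Cauchy--Schwarz, parametrix constructions---with essentially the same proofs as in the scalar setting. Since $M$ is closed and the fibers of $\SpinBdl$ are finitely generated projective Hilbert $\A$-modules, I would first choose a finite open cover $\{U_i\}$ of $M$ by coordinate charts over which $\SpinBdl$ trivializes as an orthogonal summand of a trivial Hilbert $\A^{N_i}$-bundle, together with a subordinate smooth partition of unity $\{\chi_i\}$. Multiplying $u$ and $v$ by $\chi_i$ then reduces both claims to corresponding estimates for compactly supported $\A^N$-valued functions on a bounded domain $\Omega\subset \R^n$, with $\Dirac$ replaced by its coordinate expression (a constant coefficient Dirac symbol plus a bundle endomorphism whose $L^\infty$-norm is controlled uniformly in $i$). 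Commutators of $\chi_i$ with $\nabla$ or $\Dirac$ produce only zeroth order error terms that can be absorbed into the right-hand side.

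For the Sobolev inequality (i), I would argue via the $\A$-valued Fourier transform on $\R^n$. For a smooth compactly supported $u\colon \R^n \to \A^N$, the Bochner integral $\widehat{u}(\xi)$ takes values in $\A^N$, and the elementary identity $\widehat{\nabla^\alpha u}(\xi)=(i\xi)^\alpha \widehat{u}(\xi)$ together with Cauchy--Schwarz for the $\A$-valued pairing yields, for $k>m+n/2$ and every multi-index $\abs{\alpha}\leq m$, a pointwise estimate $\abs{\nabla^\alpha u(x)}\leq C\Habs{k}{u}$ in $\A^+$. Taking the $\Cstar$-norm on both sides and summing over $\abs{\alpha}\leq m$ gives the bound $\norm{v}_{C^m}\leq C\Hnorm{k}{v}$ after reassembling the local pieces.

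For the fundamental elliptic estimate (ii), I would proceed by induction on $l$. The base case $l=0$ is Gårding's inequality in the $\A$-linear setting: a pseudodifferential parametrix $Q$ of order $-1$ for the principal symbol of $\Dirac$ can be constructed fiberwise in $\End_\A$---symbol inversion is unproblematic since $\Dirac$ is elliptic and the $\Cstar$-algebra of adjointable operators admits a functional calculus---yielding $Q\Dirac=I+R$ with $R$ of order $-1$, hence $\Hnorm{1}{u}\leq C\brackets[\big]{\Ltwonorm{\Dirac u}+\Ltwonorm{u}}$ after localization. For the induction step $l\to l+1$ I would apply the base case to $\nabla u$ and commute $\Dirac$ past $\nabla$; the commutator is a zeroth order bundle endomorphism whose contribution is dominated by $\Hnorm{l}{u}$, giving $\Hnorm{l+1}{u}\leq C\brackets[\big]{\Hnorm{l}{\Dirac u}+\Hnorm{l}{u}}$.

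The only conceptual obstacle is verifying that the classical Fourier-analytic and parametrix arguments remain valid over a possibly non-commutative and infinite-dimensional $\Cstar$-algebra $\A$. This is however not a genuine difficulty: the foundational material required (bounded functional calculus on $\Lin_\A$, $\A$-valued Bochner integration, and parametrix constructions for $\A$-linear elliptic symbols) has been laid out in detail in the references of Mishchenko--Fomenko, Schick and Ebert cited earlier, and the classical proofs from Lawson--Michelsohn transport essentially verbatim, with scalar inner products replaced by $\A$-valued ones throughout.
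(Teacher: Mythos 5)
Your proposal is correct and follows essentially the same route as the paper's sketch: localize with a partition of unity and trivializations to reduce to a model case with values in a finitely generated projective Hilbert $\A$-module, prove the Sobolev inequality by $\A$-valued Fourier transform methods (as in Mishchenko--Fomenko), and obtain the fundamental elliptic estimate by induction from a G\aa rding-type inequality whose classical proof carries over. The only cosmetic differences are that the paper reduces to the trivial bundle over the flat torus and cites the classical G\aa rding proof verbatim, whereas you work on a bounded domain in $\R^n$ and sketch a parametrix argument, which is an equally valid implementation of the same idea.
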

\begin{proof}[Sketch of proof]
  Since the manifold~$M$ is closed, it is as in the classical case enough to show the estimates for the trivial bundle $T^n\times P$ over the flat torus and a first-order elliptic differential operator. Here~$P$ denotes a finitely generated projective Hilbert $\A$-module. The general case follows by considering an appropriate partition of unity and picking a good system of trivializations of the bundle~$\SpinBdl$ as explained in \cite[Chapter III \S2]{Lawson1989}. On the torus, we can use Fourier transform methods, which generalize from $\C^m$-valued functions to functions with values in the Hilbert $\A$-module~$P$ \cite[see][\S3]{Mishchenko1980}. This implies the Sobolev inequality by the same considerations as in \cite[Chapter III \S2]{Lawson1989}. The fundamental elliptic estimate follows by induction from the Gårding's inequality, whose classical proof in \cite[Section 10.4.4]{Higson2000} carries over without change.
  %See also a note by Hanke 'Spin Geometry' for a nice short proof of both results. Note that the fundamental elliptic estimate has another proof there. It uses the Weizenböck formula, which is for general first order elliptic differential operators not avalable. Note that \cite{Lawson1989} also uses a different proof here using a parametrix for $\Dirac$, i.e.\ an inverse module an infinite smoothing operator. I am not sure if this carries over to our setting. 
  %Note that Rellichs theorem does not carries over directly (see a generalization by Ebert), since there is a probelm with the application of the Arzelà-Ascoli, since compact is different for Hilbert $\A$-modules as for Hilbert spaces. 
\end{proof}
\subsection{Spin structure of vector bundles with indefinite metric} \label{SpinStructureSubsection}
In this section, we define spin structures for vector bundles equipped with a non-degenerate not necessarily positive definite metric. For this purpose, we mix the definition of spin structures for vector bundles with a positive definite metric \cite[][Chapter II]{Lawson1989} with the notion of a spin structure for semi-Riemannian manifolds \cite[][]{Baum1981, Chen2012, Ammann2023}.\medbreak
For~$m,n\in \N_0$ denote by~$\ComplexCl_{m,n}$ the graded Real $\Cstar$-algebra generated by $\R^m\oplus \R^n$ such that $\brackets{v,w}^*=\brackets{-v,w}$ and
\begin{equation}
  \brackets{v,w}\brackets{v',w'}+\brackets{v',w'}\brackets{v,w}=-2\scalarproduct{v}{v'}_{\text{eu}}+2\scalarproduct{w}{w'}_{\text{eu}}
\end{equation}
hold for all $\brackets{v,w},\brackets{v',w'}\in \R^m\oplus \R^n$, and all elements of $\R^m\oplus \R^n$ are odd \cite[see][Section 1.1]{Ebert2016}. The pin group~$\Pin_{m,n}$ is the subgroup of~$\ComplexCl_{m,n}$ generated by $\brackets{v,w}\in \R^n\oplus \R^m\subset \ComplexCl_{m,n}$ with $\norm{v}_{eu}-\norm{w}_{eu}=\pm 1$. Furthermore, we define the subgroups $\Spin_{m,n}$ and $\Spin^+_{m,n}$ of $\Pin_{m,n}$ as the preimage of $\SO\brackets{m,n}$ and $\SO^+\brackets{m,n}$, respectively, along the two-folded covering
\begin{equation}
  \lambda\colon \Pin_{m,n} \To{} \Orth\brackets{m,n}, \quad \lambda\brackets{q}\brackets{x} \coloneqq \brackets{-1}^{\deg\brackets{q}}q x q^{-1}.
\end{equation}
Here $\SO^+\brackets{m,n}$ denotes the identity component of $\SO\brackets{m,n}$. The spin group $\Spin_{m,n}$ matches with the even part of $\Pin_{m,n}$, and $\Spin^+_{m,n}$ is equal to the identity component of~$\Spin_{m,n}$. \medbreak
Let~$M$ be a connected manifold and~$V$ an orientable vector bundle of rank~$m+n\geq 3$ equipped with a metric~$h$ of signature~$\brackets{m,n}$. The existence of such a metric is equivalent to the fact that the frame bundle~$\mathcal{F}\brackets{V}$ reduces to a $\Orth\brackets{m,n}$-principle bundle~$\Orth\brackets{V,h}$ \cite[Satz 0.47]{Baum1981}. We fix a splitting of~$V$ into two orthogonal sub-bundles~$V_1$ and~$V_2$ such that the induced metrics on~$V_1$ and~$V_2$ are positive and negative definite, respectively. The Stiefel--Whitney classes of~$V_1$ and~$V_2$ do not depend on that choice. If~$V_1$ and~$V_2$ are both orientable, the principle bundle~$\Orth\brackets{V,h}$ has four connected components and it reduces to a connected $\SO^+\brackets{m,n}$-principle bundle~$\SO\brackets{V,h}$. Otherwise, it has 2-connected components and reduces to a connected $\SO\brackets{m,n}$-principle bundle~$\SO\brackets{V,h}$ \cite[Satz 0.51]{Baum1981}. From now on, we distinguish between these two cases and write~\enquote{$+$} in parentheses.\par
A \textit{spin structure} of~$\brackets{V,h}$ is a reduction of~$\SO^{\brackets{+}}\brackets{V,h}$ to a $\Spin^{\brackets{+}}_{m,n}$-principle bundle~$\Spin\brackets{V,h}$ along the restriction of~$\lambda$ onto $\Spin^{\brackets{+}}_{m,n}$. An overview of all reductions and lifts of the frame bundle of~$V$ is given in \cref{IndefinitSpinStructure}.
\begin{figure}
\begin{center}
    \begin{tikzcd}[row sep=scriptsize, column sep=scriptsize]
        \Spin^{\brackets{+}}_{m,n} \arrow[->>]{rrr}{\lambda\vert_{\Spin^{\brackets{+}}_{m,n}}} &&& 
        \SO^{\brackets{+}}\brackets{m,n} \arrow[rr, hook] && 
        \Orth\brackets{m,n} \arrow[rr, hook] && 
        \GL\brackets{m+n}
        \\
        \Spin^{\brackets{+}}\brackets{V,h} \arrow[->>]{rrr}{\text{double cover}} \arrow[loop above] \arrow{rrrdd} &&& 
        \SO^{\brackets{+}}\brackets{V,h} \arrow[rr, hook] \arrow[loop above] \arrow{dd}&& 
        \Orth\brackets{V,h} \arrow[rr, hook] \arrow[loop above] \arrow{lldd}&& 
        \mathcal{F}\brackets{V} \arrow[loop above] \arrow{lllldd}
        \\ \\
        \text{spin structure} \arrow[bend left=25,swap, dashed]{uu}&&& 
        \quad M \quad &&&&
  \end{tikzcd} 
\end{center}
\caption{Spin structure of a rank~$m+n$ vector bundle~$V$ over a connected manifold~$M$ equipped with a metric~$h$ of signature~$(m,n)$.}
\label{IndefinitSpinStructure}
\end{figure}
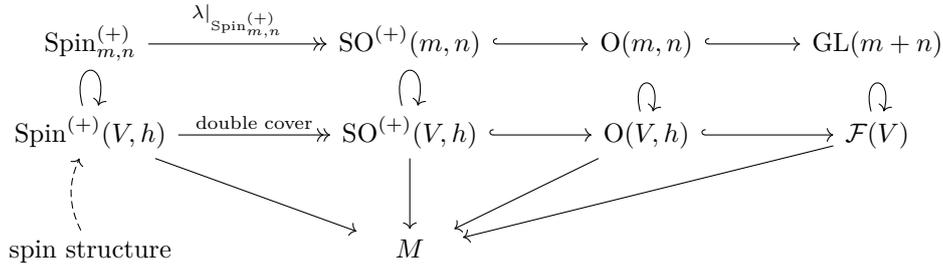
There exists a spin structure for~$\brackets{V,h}$ if and only if $w_2\brackets{V_1}+w_2\brackets{V_2}=0$ holds \cite[Satz 2.2]{Baum1981}. Note that if both sub-bundles~$V_1$ and~$V_2$ are orientable, this condition is equivalent to
\begin{equation}
  w_2\brackets{V}=w_2\brackets{V_1}+w_2\brackets{V_2}+w_1\brackets{V_1}w_1\brackets{V_2}=0,
\end{equation}
hence it is equivalent to the existence of a \textit{topological spin structure} on~$V$. This is a lift of $\mathcal{F}^+\brackets{V}$ along the two-fold covering $\widetilde{\GL^+}\brackets{m+n} \to \GL^+\brackets{m+n}$, where $\mathcal{F}^+\brackets{V}$ denotes a reduction
%\footnote{Such a reduction exists because~$V$ is orientable.}
of $\mathcal{F}\brackets{V}$ along the inclusion $\GL^+\brackets{m+n}\xhookrightarrow{} \GL\brackets{m+n}$. 
%In the general case,~$w_1\brackets{V_1}$ and $w_1\brackets{V_2}$ could be non-vanishing with $w_1\brackets{V_1}+w_1\brackets{V_2}=0$. Then the existence of a spin structure of~$\brackets{V,h}$ is different from the existence of a topological spin structure of~$V$. It turns out that this is the right notion of a spin structure for our application in \cref{SectionExtensionGoetteSemmelmann}. 
\par
In the remaining section, we will construct the $\ComplexCl_{m,n}$-linear spinor bundle from a given spin structure on $\brackets{V,h}$. Let $K_{m,n}^{(\pm)}\subset \Spin^{(\pm)}_{m,n}$ denote the maximal compact subgroups covering $S\brackets{O_m\times O_n}\subset \SO_{m,n}$ and $\SO_m\times \SO_n\subset \SO^+_{m,n}$, respectively. Since $K_{m,n}^{(\pm)}\subset \Spin_{m,n}^{(\pm)}$ is a deformation retract, any $\Spin^{(\pm)}_{m,n}$-principle bundle reduces to a $K^{(\pm)}_{m,n}$-principle bundle. Therefore, a fixed spin structure on~$\brackets{V,h}$ gives rise to a $K^{(\pm)}_{m,n}$-principle bundle $P_{K^{(\pm)}_{m,n}}\brackets{V,h}$, and we obtain a vector bundle
\begin{equation}
  \SpinBdl\brackets{V,h}\coloneqq P_{K^{(\pm)}_{m,n}}\brackets{V,h} \times_{K^{(\pm)}_{m,n}} \ComplexCl_{m,n},
\end{equation}
where $K^{(\pm)}_{m,n}\subset \ComplexCl_{m,n}$ acts as left multiplication on $\ComplexCl_{m,n}$. The vector bundle~$\SpinBdl\brackets{V,h}$ inherits a Real structure, a grading and a right $\ComplexCl_{m,n}$-action. The fiberwise canonical $\ComplexCl_{m,n}$-valued metric on $\ComplexCl_{m,n}$ defines a global positive definite $\ComplexCl_{m,n}$-valued metric on $\SpinBdl\brackets{V,h}$. Here it is important that we used the reduction of the $\Spin^{(\pm)}_{m,n}$-principle bundle $\Spin^{\brackets{+}}\brackets{V,h}$ to the subgroup $K^{(\pm)}_{m,n}$ in the construction of $\SpinBdl\brackets{V,h}$.
The map $\brackets{\R^m\oplus \R^n} \times \ComplexCl_{m,n} \to \ComplexCl_{m,n}$ given by left multiplication induces a well-defined bundle map
\begin{equation}
  \clm2 \colon V\cong \SO^{\brackets{+}}_{m,n}\brackets{m,n}\times_{\SO^{\brackets{+}}_{m,n}}\brackets{\R^m\oplus \R^n} \To{} \End_{\ComplexCl_{m,n}}\brackets{\SpinBdl\brackets{V,h}}, 
\end{equation} 
which is odd, Real, satisfies $\clm2\brackets{v}^2=-h\brackets{v,v}$ for all $v\in V$, and $\clm2\brackets{v}$ is skew-adjoint for $v\in V_1$ and self-adjoint for $v\in V_2$. If the vector bundle $V$ is equipped with a metric connection, the vector bundle~$\SpinBdl\brackets{V,h}$ inherits a canonical connection making all structures parallel. We call~$\SpinBdl\brackets{V,h}$ the \textit{$\ComplexCl_{m,n}$-linear spinor bundle induced by~$\Spin \brackets{V,h}$}. \par
For an orientable Riemannian manifold~$\brackets{M,g}$ of dimension~$m$ with non-vanishing second Stiefel--Whitney class the previous construction with $\brackets{V,h}=\brackets{TM,g}$ matches with the classical spin structure in \cite[Chapter II \S1 and \S2]{Lawson1989}. If the tangent bundle is equipped with the Levi-Civita connection, the induced spinor bundle $\SpinBdl\coloneqq \SpinBdl\brackets{TM,g}$ is the $\ComplexCl_m$-linear Dirac bundle in \cite[Chapter II \S8]{Lawson1989}.
\subsection{The Mishchenko bundle and the Rosenberg index} \label{MishchenkoBundleSubsection}
Let~$M$ be a closed connected manifold of dimension~$m$ with fundamental group $\pi$ and universal cover $\widetilde{M}$. Denote by~$\Cstar \pi$ the maximal group $\Cstar$-algebra of~$\pi$ equipped with the Real structure induced by the natural Real structure on~$\C$. The \textit{Mishchenko (line) bundle} of~$M$ \cite{Mishchenko1980} is defined by
\begin{equation}
  \mathcal{L}\brackets{M}\coloneqq \mathcal{L} \coloneqq \widetilde{M} \times_{\pi} \Cstar \pi,
\end{equation}
where~$\pi$ acts on~$\Cstar \pi$ by left multiplication. The bundle~$\mathcal{L}$ comes with a canonical flat connection and a $\Cstar \pi$-valued product given by $\scalarproduct{a}{b}\coloneqq a^*b$, hence~$\mathcal{L}$ is a flat bundle of finitely generated projective Hilbert $\Cstar \pi$-modules together with a Real structure.\par
Suppose the manifold~$M$ is equipped with a spin structure. Twisting the $\ComplexCl_m$-linear spinor bundle~$\SpinBdl$ of~$M$ by the Mishchenko bundle $\mathcal{L}$ as in \cref{ConstructionTwistBundle} gives a graded Real $\ComplexCl_{m}\tensgr \Cstar \pi$-linear Dirac bundle $\SpinBdl_{\mathcal{L}} \coloneqq \SpinBdl \tensgr \mathcal{L}$. The higher index of the induced $\ComplexCl_m\tensgr\Cstar \pi$-linear Dirac operator~$\Dirac_{\mathcal{L}}$, denoted by 
\begin{equation} \label{DefRosenbergIndexEq}
  \alpha\brackets{M} \coloneqq \ind\brackets{\Dirac_{\mathcal{L}}} \in \KO_0\brackets{\ComplexCl_m\tensgr\Cstar \pi} \cong \KO_m\brackets{\Cstar \pi},
\end{equation}
is called the \textit{Rosenberg index of~$M$} \cite{Rosenberg1983}. There is a well-known connection between the Rosenberg index and the scalar curvature of~$M$:
\begin{prop}
  If the scalar curvature of the manifold~$M$ is positive, then the Rosenberg index of~$M$ vanishes.  
\end{prop}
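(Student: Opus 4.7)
The plan is to apply the Schrödinger--Lichnerowicz formula to the $\Cstar\pi$-twisted Dirac operator $\Dirac_{\mathcal{L}}$ and to use \cref{DiracInvertible}. The crucial observation is that the Mishchenko bundle $\mathcal{L}$ is equipped with a flat connection, so when one twists the classical Schrödinger--Lichnerowicz formula for the $\ComplexCl_m$-linear spinor bundle $\SpinBdl$ by $\mathcal{L}$ according to \cref{ConstructionTwistBundle}, no additional curvature term appears. Hence on sections of $\SpinBdl_{\mathcal{L}} = \SpinBdl \tensgr \mathcal{L}$ one gets the simple local identity
\begin{equation}
  \Dirac_{\mathcal{L}}^2 = \nabla^* \nabla + \tfrac{1}{4}\scal_M,
\end{equation}
which I would either cite from the standard references on $\Cstar$-algebraic Dirac bundles or verify in a local orthonormal frame, using that all fiberwise structures on $\mathcal{L}$ are parallel.

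Next I would note that, because $M$ is closed and $\scal_M>0$ is continuous, there is a constant $c>0$ with $\scal_M\geq c$ on all of $M$. Pairing the above formula against a compactly supported smooth section $u\in \Ct_c^\infty(M,\SpinBdl_{\mathcal{L}})$ and integrating gives
\begin{equation}
  \Ltwoscalarproduct{\Dirac_{\mathcal{L}}^2 u}{u}
  = \Ltwoscalarproduct{\nabla u}{\nabla u} + \tfrac{1}{4}\int_M \scal_M \scalarproduct{u}{u}_p \intmathd p
  \geq \tfrac{c}{4}\Ltwoscalarproduct{u}{u},
\end{equation}
where the inequality uses positivity of $\nabla^*\nabla$ together with the fact that $\scal_M - c\geq 0$ is a non-negative scalar function, so multiplication by it preserves positivity of the $\Cstar\pi$-valued inner product in each fiber. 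This shows that $\Dirac_{\mathcal{L}}^2$ is uniformly positive in the sense of the excerpt.

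Finally I would invoke \cref{DiracInvertible}: part~(i) gives that $\Dirac_{\mathcal{L}}$ is invertible, and then part~(ii) yields that $\Dirac_{\mathcal{L}}$ is Fredholm with vanishing higher index in $\KO_0(\ComplexCl_m \tensgr \Cstar\pi) \cong \KO_m(\Cstar\pi)$. By the definition of the Rosenberg index in \eqref{DefRosenbergIndexEq}, this is exactly $\alpha(M)=0$. The only step requiring care is the vanishing of the twist curvature in the Schrödinger--Lichnerowicz formula, but since $\mathcal{L}$ carries the canonical flat connection from $\widetilde{M}\to M$, this is essentially automatic and is the standard input behind the Rosenberg obstruction.
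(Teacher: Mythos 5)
Your proposal is correct and follows essentially the same route as the paper: flatness of the Mishchenko bundle gives the untwisted Schrödinger--Lichnerowicz identity $\Dirac_{\mathcal{L}}^2=\nabla^*\nabla+\tfrac{1}{4}\scal_M$, compactness of $M$ gives a uniform lower bound $\scal_M\geq c>0$, hence $\Dirac_{\mathcal{L}}^2$ is uniformly positive, and \cref{DiracInvertible} then yields invertibility and the vanishing of $\alpha(M)$. The only cosmetic difference is that you verify the positivity estimate on $\Ct_c^\infty$-sections rather than directly on $\Hsp{2}{M,\SpinBdl_{\mathcal{L}}}$ as the paper does, which is harmless since the estimate extends by density.
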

\begin{proof}
  Assume $\scal_M>0$. Since the Mishchenko bundle~$\mathcal{L}$ is flat, the Schrödinger--Lichnerowicz formula yields $\Dirac_{\mathcal{L}}^2=\nabla^*\nabla+\frac{\scal_M}{4}$, and we obtain $\Ltwoscalarproduct{\Dirac_{\mathcal{L}}^2u}{u}\geq \min_{p\in M} \scal_M\brackets{p}\Ltwoscalarproduct{u}{u}$ for all $u\in \Hsp{2}{M,\SpinBdl_{\mathcal{L}}}$. Note that $\min_{p\in M} \scal_M\brackets{p}$ is strictly positive by assumption on the scalar curvature. By \cref{DiracInvertible} part~(1), the Dirac operator~$\Dirac_{\mathcal{L}}$ is invertible and the Rosenberg index of~$M$ vanishes by part~(2) of the same lemma.    
\end{proof}
There is a meta-conjecture by \textcite[Conjecture 1.5]{Schick2014}, that any obstruction to positive scalar curvature on a closed spin manifold (of dimension at least five) based on index theory of Dirac operators can be read off the Rosenberg index. The following proposition is a well-known result in this direction and justifies why the Mishchenko bundle is the right bundle for the twist with the spinor bundle. In particular, it states  that any obstruction to a positive scalar curvature metric on~$M$, obtained from a higher index of the spin Dirac operator twisted with any Real flat bundle of finitely generated projective Hilbert $\A$-modules, can be read off the Rosenberg index. 
\begin{prop} \label{WhyMishchenkoProp}
  Let $\SpinBdl\to M$ be a graded Real $\A$-linear Dirac bundle, $\B$ a unital Real $\Cstar$-algebra and $E\to M$ a Real flat bundle of finitely generated projective $\B$-modules. Denote by~$\DE$ and $\Dirac_{\mathcal{L}}$ the induced Dirac operators of~$\SpinBdl$ twisted with the bundle~$E$, respectively the Mishchenko bundle~$\mathcal{L}$. Then the following holds:
  \begin{equation}
    \ind \DE \neq 0 \in \KO\brackets{\A\tensgr\B}
    \quad \Longrightarrow \quad
    \ind\Dirac_{\mathcal{L}} \neq 0\in \KO\brackets{\A\tensgr\Cstar \pi} 
  \end{equation}
\end{prop}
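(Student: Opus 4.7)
The plan is to factor the twist by $E$ through the twist by the Mishchenko bundle $\mathcal{L}$ using the universal property of the maximal group $\Cstar$-algebra, and then to invoke naturality of the higher index under this factorization. To implement this, note first that since $E\to M$ is a Real flat bundle of finitely generated projective Hilbert $\B$-modules, parallel transport along loops yields a Real unitary $*$-representation $\rho\colon \pi = \pi_1(M) \to \End_{\B}(P)$ on a fixed fiber $P$. By the universal property of the maximal group $\Cstar$-algebra, $\rho$ extends uniquely to a Real unital $*$-homomorphism $\tilde\rho\colon \Cstar\pi \to \End_{\B}(P)$, turning $P$ into a Real $(\Cstar\pi,\B)$-correspondence and in particular defining a class $[P]\in \KK(\Cstar\pi,\B)$ in Real KK-theory.

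Second, $E$ is canonically isomorphic as a flat Hilbert $\B$-module bundle to the associated bundle $\widetilde{M}\times_{\pi}P$, and the natural identification
$\widetilde{M}\times_{\pi}\Cstar\pi \otimes_{\Cstar\pi} P \cong \widetilde{M}\times_{\pi}P$
yields a flat isomorphism $E \cong \mathcal{L}\otimes_{\Cstar\pi}P$. Taking the graded tensor product with $\SpinBdl$ and using associativity of the interior tensor product of Hilbert $\Cstar$-modules, one obtains
\begin{equation}
  \SpinBdl \tensgr E \;\cong\; (\SpinBdl \tensgr \mathcal{L}) \otimes_{\Cstar\pi} P.
\end{equation}
Because Clifford multiplication acts only on the $\SpinBdl$-factor and the connection on $P$ inherited from the flat structure is trivial (its holonomy has been absorbed into $\tilde\rho$), the $\A\tensgr\B$-linear Dirac operator $\DE$ corresponds under this identification to $\DL \otimes_{\Cstar\pi} \mathrm{id}_P$.

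Third, by the product formula for higher indices in the graded Real $\Cstar$-algebra setting (cf.\ \cite[Appendix~B]{Zeidler2022}, already cited in the excerpt), the above factorization of the twisted bundle implies
\begin{equation}
  \ind \DE \;=\; \ind \DL \otimes_{\Cstar\pi} [P],
\end{equation}
so that $\ind \DE$ is the image of $\ind \DL$ under the group homomorphism $\KO(\A\tensgr\Cstar\pi) \to \KO(\A\tensgr\B)$ induced by $\mathrm{id}_{\A}\tensgr[P]$. In particular, $\ind \DL = 0$ forces $\ind \DE = 0$, and the contrapositive is exactly the claim of the proposition.

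The main obstacle I anticipate is the middle step: rigorously identifying $\SpinBdl\tensgr E$ with $(\SpinBdl\tensgr\mathcal{L})\otimes_{\Cstar\pi}P$ inside the graded Real $\A$-linear Dirac-bundle framework of Subsection~\ref{AlinearDiracOperatorsSubsection}, and verifying that the analytically defined Dirac operator $\DE$ really coincides with $\DL\otimes_{\Cstar\pi}\mathrm{id}_P$ (as self-adjoint regular unbounded operators on Hilbert $\A\tensgr\B$-modules), so that the KK-theoretic product formula applies verbatim in this Real graded setting.
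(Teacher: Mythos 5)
Your proposal is correct and follows essentially the same route as the paper's proof: extend the holonomy representation to $\Cstar\pi$ via the universal property, identify $E\cong\mathcal{L}\tens_{\overline{\rho}}P$ so that $\DE$ becomes $\DL\tens\id$, and push $\ind\DL$ forward along the induced map on Real K-theory. The only cosmetic difference is that you phrase the last step as a Kasparov-type product with the class $[P]$, whereas the paper applies the homomorphism $(\psi)_0$ induced by $\psi=\id_{\A}\tensgr\overline{\rho}$ in the Fredholm picture, which amounts to the same functoriality statement.
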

\begin{proof}
  We orientate towards the proof of a similar, but slightly weaker, result by \textcite[Proposition 3.1 and Theorem 3.2]{Schick2021a}. Let~$P$ be the typical fiber of the flat bundle~$E$. Then there exists a representation $\rho \colon \pi \to U_{\B}\brackets{P}$ on the unitary operators on the Real finitely generated projective Hilbert $\B$-module~$P$ such that $E\cong \widetilde{M}\times_{\rho} P$. By the universal property of the maximal group $\Cstar$-algebra, this representation extends to a $\Cstar$-algebra homomorphism $\overline{\rho}\colon \Cstar \pi \to \Lin_{\B}\brackets{P}$ that respects the grading and the Real structure. The representation
  \begin{equation}
    \psi\coloneqq \id_{\A}\tensgr \overline{\rho}:\A\tensgr \Cstar \pi \to \A \tensgr \Lin_{\B}\brackets{P}\cong \Lin_{\A\tensgr\B}\brackets{\A\tensgr P}
  \end{equation}
  induces as in the proof of \cite[Lemma 12.17]{Roe} a homomorphism on Real K-theory, which takes in the Fredholm picture the form
  \mapdefoneline
    {\brackets{\psi}_0}
    {\KO_0\brackets{\A \tensgr\Cstar \pi}}{\KO_0\brackets{\A \tensgr\B}}
    {\sqbrackets{\brackets{F,Q}}}
    {\sqbrackets{\brackets{F\tens \id_{\A\tensgr P},Q\tens_{\psi}\brackets{\A\tensgr P}}}.}
  Here $\tens_{\psi}$ denotes the interior tensor product with respect to the representation~$\psi$. Furthermore, the bundle~$E$ is related to the Mishchenko bundle via $E \cong \widetilde{M}\times_{\rho} P \cong \widetilde{M}\times_{\rho} \brackets{\Cstar \pi\tens_{\overline{\rho}}P }\cong \mathcal{L}\tens_{\overline{\rho}}P$, which implies 
  %by the definition of~$\psi=\id_{\A} \tensgr \,\overline{\rho}$
  \begin{equation}
    \Ltwo{M,\SpinBdl\tensgr \mathcal{L}}\tens_{\psi}\brackets{\A\tensgr P}
    \cong \Ltwo{M,\brackets{\SpinBdl\tensgr \mathcal{L}}\tens_{\psi}\brackets{\A\tensgr P}}
    \cong \Ltwo{M,\SpinBdl \tensgr E}.
  \end{equation}
  Note that the tensor product of a bundle of Hilbert $\Cstar$-modules with a Hilbert $\Cstar$-module is always taken fiberwise. Together with $\Dirac_{\mathcal{L}} \tens \id_{A \tensgr P}=\DE$, we obtain
  \begin{equation}
    \brackets{\psi}_{0}\colon \ind \Dirac_{\mathcal{L}}=\sqbrackets{\brackets{\Dirac_{\mathcal{L}},\Ltwo{M,S\tensgr\mathcal{L}}}} \Mapsto 
    \ind \DE = \sqbrackets{\brackets{\DE,\Ltwo{M,S \tensgr E}}},
  \end{equation}
  and the proposition is proved. 
\end{proof}
%
%%%%%%%%%%%%%%%%%%%%%%%%%%%%%%%%%%%%%%%%%%%%%%%%%%%%%%%%%%%%%%%%%%%%%%%%%%%%%%%%%%%%%%%%%%%%%%%%%%%%%%%%%%%%%%%%%%%%%%%%%
\section{Almost harmonic, almost parallel and almost constant sections} \label{SectionAlmostHarmonicSections}
Let $\SpinBdl$ be a classical graded Dirac bundle over a closed connected Riemannian manifold~$M$. If the index of the induced Dirac operator $\Dirac$ is non-zero, there exists a non-trivial smooth section~$u$ in the kernel of $\Dirac$. Furthermore, if~$u$ is additionally parallel, i.e.\ $\nabla u=0$, the map $f\colon M \to \R$, $p\mapsto \norm{u}_p$, is constant. In this section, we generalize this result to an $\A$-linear Dirac operator induced by a graded Real $\A$-linear Dirac bundle over a closed connected Riemannian manifold. It turns out that if the higher index of the Dirac operator does not vanish in $\KO\brackets{\A}$, there exists a family of almost $\Dirac$-harmonic sections (see \cref{DefAlmost}). We will furthermore study under which conditions these sections are almost constant in the sense of \cref{DefAlmost}.\par
At first, we make the notion of almost $\Dirac$-harmonic, almost parallel and almost constant precise. Second, we construct almost $\Dirac$-harmonic sections under the hypothesis that the $\A$-linear Dirac operator is not invertible (\cref{AlmostHarmonicExistence}), which is the case, if its higher index does not vanish. Last, we study the relation between almost $\Dirac$-harmonic, almost parallel and almost constant sections in general. \medbreak
We fix a graded Real $\A$-linear Dirac bundle~$\SpinBdl$ over a closed connected Riemannian manifold~$M$ of dimension~$n$ with induced Dirac operator~$\Dirac$. 
\begin{defi} \label{DefAlmost}
  A~family $\set{\ue}_{\epsilon>0}$ of smooth sections of~$\SpinBdl$, which is $\Ltwoblanc$-normalized, i.e.\ $\Ltwonorm{\ue}=1$ for all $\epsilon >0$, is called 
  \begin{enumerate}
    \item 
      \textit{almost $\Dirac$-harmonic} if $\Ltwonorm[\big]{\Dirac^j\ue}<\epsilon^j$ for all $\epsilon > 0$ and all $j\geq 1$.
    \item 
      \textit{almost parallel} if it satisfies $\Ltwonorm{\nabla \ue}<\epsilon$ for all $\epsilon>0$ and all covariant derivatives are uniformly bounded in $\epsilon\in \brackets{0,1}$, i.e.\ for all $m\in \N_0$ there exists a constant~$C_m$ such that $\Inftynorm{\nabla^m\ue}< C_m$ for all $\epsilon \in \brackets{0,1}$.
    \item 
      \textit{almost constant} if there exist constants~$C,r>0$ such that
      \begin{equation}
        \Anorm[\big]{\overline{u}_\epsilon-\abs{\ue}_p^2}< C  \epsilon^r, \quad \text{with} \quad
        \overline{u}_\epsilon \coloneqq \frac{1}{\vol M}\int_M \abs{\ue}^2_q \intmathd q,
      \end{equation}
      for all $\epsilon \in (0,1)$ and all $p\in M$.
  \end{enumerate}
\end{defi}
In the next lemma, we give a connection between the existence of a family of almost $\Dirac$-harmonic sections and the invertibility of the Dirac operator~$\Dirac$. Combined with \cref{DiracInvertible} (2), we obtain: If the index of~$\Dirac$ does not vanish in~$\KO_0\brackets{\A}$, there exists a family of almost $\Dirac$-harmonic sections. 
\begin{lem} \label{AlmostHarmonicExistence}
  If the Dirac operator~$\Dirac$ is not invertible, there exists a family of almost $\Dirac$-harmonic sections.
\end{lem}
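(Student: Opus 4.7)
The plan is to produce~$\ue$ via the continuous functional calculus~$\Phi_\Dirac$ of the $\Ltwoblanc$-closure of~$\Dirac$. Since~$\Dirac$ is self-adjoint and regular, non-invertibility is equivalent to $0\in \sigma\brackets{\Dirac}$, which I would assume throughout.

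For each $\epsilon>0$, pick $\chi_\epsilon\in \Ct\brackets[\big]{\overline{\R}}$ with $0\leq \chi_\epsilon\leq 1$, $\chi_\epsilon\brackets{0}=1$ and $\supp\brackets{\chi_\epsilon}\subset \brackets{-\epsilon\slash 3,\epsilon\slash 3}$. A standard property of the continuous functional calculus of a self-adjoint regular operator is that the operator norm of $f\brackets{\Dirac}$ agrees with $\sup_{\sigma\brackets{\Dirac}}\abs{f}$. Because $0\in \sigma\brackets{\Dirac}$ and $\chi_\epsilon\brackets{0}=1$, this forces $\chi_\epsilon\brackets{\Dirac}$ to have operator norm one, so I can pick $v_\epsilon\in \Ltwo{M,\SpinBdl}$ with $\Ltwonorm{v_\epsilon}=1$ and $\Ltwonorm{\chi_\epsilon\brackets{\Dirac}v_\epsilon}>1\slash 2$. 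Setting $w_\epsilon\coloneqq \chi_\epsilon\brackets{\Dirac}v_\epsilon$ and $\ue\coloneqq w_\epsilon\slash \Ltwonorm{w_\epsilon}$ then yields $\Ltwonorm{\ue}=1$.

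Further standard properties of the functional calculus give $w_\epsilon\in \dom\brackets{\Dirac^j}$ and $\Dirac^j w_\epsilon=g_j\brackets{\Dirac}v_\epsilon$ for every $j\in \N_0$, where $g_j\brackets{x}\coloneqq x^j\chi_\epsilon\brackets{x}$. The pointwise bound $\abs{g_j}\leq \brackets{\epsilon\slash 3}^j$ together with the preceding estimates then produces
\begin{equation}
  \Ltwonorm[\big]{\Dirac^j \ue}\leq \frac{\brackets{\epsilon\slash 3}^j\cdot \Ltwonorm{v_\epsilon}}{\Ltwonorm{w_\epsilon}}<2\brackets{\epsilon\slash 3}^j<\epsilon^j \quad \text{for every } j\geq 1.
\end{equation}

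Smoothness of~$\ue$ is the final step: since $\Dirac^j \ue\in \Ltwo{M,\SpinBdl}$ for every $j\in \N_0$, a strong induction on~$k$ using the fundamental elliptic estimate in \cref{ClassicalEstimatesExtension} forces $\ue\in \Hsp{k}{M,\SpinBdl}$ for all~$k$, and the Sobolev inequality in the same lemma yields $\ue\in \Cinfty{M,\SpinBdl}$. The one slightly delicate ingredient is the lower bound $\Ltwonorm{w_\epsilon}>1\slash 2$, which crucially uses that the functional calculus is isometric on the spectrum, so that $\chi_\epsilon\brackets{\Dirac}$ has operator norm exactly one.
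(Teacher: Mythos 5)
Your proposal is correct and follows essentially the same route as the paper: both use the functional calculus of the self-adjoint regular closure of~$\Dirac$ with a bump function concentrated near $0\in\sigma\brackets{\Dirac}$, normalize the resulting vector, bound $\Ltwonorm{\Dirac^j\ue}$ via $\sup_{\sigma\brackets{\Dirac}}\abs{t^j\chi_\epsilon\brackets{t}}$, and conclude smoothness from elliptic regularity and the Sobolev inequality of \cref{ClassicalEstimatesExtension}. The only differences are cosmetic normalizations (bump of height $1$ with support in $\brackets{-\epsilon\slash 3,\epsilon\slash 3}$ and threshold $1\slash 2$, versus the paper's height $2$, support in $\brackets{-\epsilon\slash 2,\epsilon\slash 2}$ and threshold $1$).
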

\begin{proof}
  We fix for all $\epsilon>0$ a smooth function $\feblanc\colon \R \to \R$ with $\fe{\R}\subset \sqbrackets{0,2}$, $\supp  \brackets{\feblanc}\subset \brackets{-\frac{\epsilon}{2},\frac{\epsilon}{2}}$ and $\fe{0}=2$. By the functional calculus, we obtain adjointable operators $\fe{\Dirac}\colon \Ltwo{M,\SpinBdl}\to \Ltwo{M,\SpinBdl}$ satisfying 
  \begin{equation} \label{opnormfepsilon}
  \opnorm{\fe{\Dirac}}=\sup_{t\in \sigma\brackets{\Dirac}} \abs{\fe{t}}=2.
  \end{equation}
  Here we used in the second step that zero is in the spectrum of $\Dirac$. We conclude from \cref{opnormfepsilon} that there exists $\widetilde{v}_\epsilon \in \Ltwo{M,\SpinBdl}$ with $\Ltwonorm{\widetilde{v}_\epsilon}=1$ and $\Ltwonorm{\fe{\Dirac}\widetilde{v}_\epsilon}\geq 1$. Define a family of sections via
  \begin{equation} \label{Defue}
    u_{\epsilon} \coloneqq \fe{\Dirac}\ve\in \Ltwo{M,\SpinBdl} \quad \text{with}  \quad \ve \coloneqq \frac{\widetilde{v}_\epsilon}{\Ltwonorm{\fe{\Dirac}\widetilde{v}_\epsilon}}.
  \end{equation}
  All~$\ue$ satisfy $\Ltwonorm{\ue}=1$ by definition, hence it remains to show that all~$\ue$ are smooth, and the family is indeed almost $\Dirac$-harmonic. We fix an arbitrary $j\in \N$ and $\epsilon >0$. Using the properties of the functional calculus as in \cite[Theorem 1.19]{Ebert2016} inductively gives 
  %by $\supp\brackets{\feblanc} \subset \brackets{-\epsilon,\epsilon}$ 
  that $\ue$ is in the domain of $\Dirac^j$ and $\Dirac^j \ue$ equals $g_\epsilon^j\brackets{\Dirac} \ve$ for $g_\epsilon^j\brackets{t}=t^j \fe{t}, t\in \R$. This gives 
  \begin{equation}
    \Ltwonorm[\big]{\Dirac^j u_{\epsilon}}
    =\Ltwonorm[\big]{g_\epsilon^j\brackets{\Dirac} \ve}
    \leq \opnorm[\big]{g_\epsilon^j\brackets{\Dirac}} \Underbrace{\Ltwonorm{\ve}}{\leq 1}
    \leq \sup_{t\in \sigma \brackets{\Dirac}}\abs[\big]{t^j\fe{t}} 
    < \epsilon^j.
  \end{equation}
  The domain of~$\Dirac^j$ equals $\Hsp{j}{M,\SpinBdl}$, so that~$\ue$ is in $\Hsp{j}{M,\SpinBdl}$ by the previous considerations. It follows, by the Sobolev inequality in \cref{ClassicalEstimatesExtension}~(i), that~$\ue$ is smooth, and the lemma is proved. 
\end{proof}
We will see in \cref{AlmostConstantLem}, that any family of almost parallel sections is almost constant. This turns the existence of a family of almost parallel sections into a useful tool to deduce geometric consequence, as the extremality and rigidity statements in \cref{SectionExtensionGoetteSemmelmann}. The following lemma connects the existence of almost $\Dirac$-harmonic sections with the existence of almost parallel sections: 
\begin{lem} \label{AlmostHarmonicImpliesAlmostParallel}
  Any family $\set{\ue}_{\epsilon>0}$ of almost $\Dirac$-harmonic sections with $\Ltwonorm{\nabla\ue}<\epsilon$ for all $\epsilon>0$ is almost parallel. 
\end{lem}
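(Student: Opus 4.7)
The plan is to combine the fundamental elliptic estimate and the Sobolev inequality from \cref{ClassicalEstimatesExtension} with the almost $\Dirac$-harmonicity hypothesis in order to upgrade the assumed $L^2$-bound on $\nabla \ue$ to uniform $C^m$-bounds on~$\ue$ for $\epsilon\in \brackets{0,1}$. Since the $L^2$-bound on $\nabla \ue$ is given, the only missing piece in \cref{DefAlmost}~(2) is, for every $m\in \N_0$, the existence of a constant~$C_m$ with $\Inftynorm{\nabla^m\ue}<C_m$ for all $\epsilon\in \brackets{0,1}$.

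First I would prove by induction on $k\in \N_0$ the auxiliary statement that there is a constant $\tilde{C}_k>0$ such that any smooth section $v$ of $\SpinBdl$ satisfies
\begin{equation}
  \Hnorm{k}{v}\leq \tilde{C}_k \sum_{j=0}^k \Ltwonorm[\big]{\Dirac^j v}.
\end{equation}
The base case $k=0$ is trivial with $\tilde{C}_0=1$. For the inductive step, the fundamental elliptic estimate from \cref{ClassicalEstimatesExtension}~(ii) yields $\Hnorm{k+1}{v}\leq C\brackets[\big]{\Hnorm{k}{\Dirac v}+\Hnorm{k}{v}}$, and applying the induction hypothesis both to $v$ and to $\Dirac v$ gives the required bound at level $k+1$. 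Specialising to $v=\ue$ and using $\Ltwonorm{\Dirac^j\ue}<\epsilon^j\leq 1$ for $\epsilon\in \brackets{0,1}$ and $j\geq 1$, together with $\Ltwonorm{\ue}=1$, then shows that $\Hnorm{k}{\ue}$ is uniformly bounded in $\epsilon\in \brackets{0,1}$.

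For a fixed $m\in \N_0$, I would then pick $k\in \N$ with $k>m+\frac{n}{2}$ and apply the Sobolev inequality from \cref{ClassicalEstimatesExtension}~(i):
\begin{equation}
  \norm{\ue}_{C^m}\leq C\, \Hnorm{k}{\ue}\leq C\, \tilde{C}_k\brackets{k+1}.
\end{equation}
Since $\norm{\ue}_{C^m}=\sum_{j=0}^m \Inftynorm[\big]{\nabla^j\ue}$, this provides the required uniform bound $\Inftynorm{\nabla^m\ue}<C_m$ for all $\epsilon\in \brackets{0,1}$, finishing the proof. I do not expect a substantial obstacle here: the only slightly delicate point is the inductive iteration of the elliptic estimate, and this is entirely standard once the $\A$-linear versions of the two classical regularity tools provided in \cref{ClassicalEstimatesExtension} are at hand.
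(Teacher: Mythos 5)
Your proposal is correct and follows essentially the same route as the paper: the Sobolev inequality of \cref{ClassicalEstimatesExtension}~(i) combined with what the paper calls an \enquote{iterated version} of the fundamental elliptic estimate, which is exactly the inductive bound $\Hnorm{k}{v}\leq \tilde{C}_k\sum_{j=0}^k\Ltwonorm{\Dirac^j v}$ you spell out, and then the almost $\Dirac$-harmonicity to make the right-hand side uniform in $\epsilon\in\brackets{0,1}$. The only difference is that you make the iteration explicit, which the paper leaves implicit.
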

\begin{proof}
  Let $\set{\ue}_{\epsilon>0}$ be a family of almost $\Dirac$-harmonic sections satisfying $\Ltwonorm{\nabla\ue}<\epsilon$ for all $\epsilon>0$. We fix an $m\geq 0$ and a $k>m+\frac{n}{2}$. There exist, by the Sobolev inequality in \cref{ClassicalEstimatesExtension}~(i) and an iterated version of the fundamental elliptic estimate in \cref{ClassicalEstimatesExtension}~(ii), positive constants $C_1$ and $C_m$ such that
  \begin{equation}
    \Inftynorm{\nabla^m \ue}
    \leq \norm{\ue}_{C^m}
    \leq C_1\Hnorm{k}{\ue}
    \leq C_m\sum_{j=0}^k \Ltwonorm[\big]{\Dirac^j \ue}
    < C_m (1+k) 
  \end{equation}
  holds for all $\epsilon \in \brackets{0,1}$. Here we used in the first step the definition of the $C^m$-norm and in the last step that $\set{\ue}_{\epsilon >0}$ is almost $\Dirac$-harmonic. This shows that for all~$m\in \N_0$ the $m$-th covariant derivative of~$\ue$ is uniformly bounded in $\epsilon \in \brackets{0,1}$, hence $\set{\ue}_{\epsilon>0}$ is almost parallel. 
\end{proof}
The remaining section is dedicated to the statement that any family of almost parallel sections is almost constant (\cref{AlmostConstantLem}). In the first step we use Moser iteration (\cref{MoserIteration}) to deduce from the $\Ltwoblanc$-estimates on the covariant derivations on~$\ue$, estimates on the $\Linftyblanc$-norm. It is crucial for this step, that all covariant derivatives of~$\ue$ are uniformly bounded in $\epsilon \in \brackets{0,1}$. In the second step, we use the Poincaré inequality to show that any family of almost parallel sections is almost constant.\medbreak
The version of Moser iteration---as stated in the following lemma---is a combination of \cite[Theorem 9.2.7]{Petersen2016} and the corrected version of \cite[Lemma 3.1]{Petersen1999} in the erratum \cite[Lemma 0.1]{Petersen2003}. We use the non-negative Laplace operator defined via $\Laplace f \coloneqq -\tr\brackets{\Hess f}$. 
\begin{lem}[Moser iteration] \label{MoserIteration}
  Let $f\colon M\to \R^+\coloneqq \R_{\geq 0}$ be a continuous function that is smooth on $\set{f>0}$ and satisfies $\Laplace f \leq \alpha$ on $\set{f>0}$ for a positive constant $\alpha$. Then there exist constants $C>0$ and $r\in (e^{-\brackets{n-2}\ln\brackets{2}/2},1]$ such that 
    \begin{equation}
      \Inftynorm{f}\leq C\Ltwonorm{f}^r
    \end{equation}
  holds. The constant~$C$ just depends on the constant~$\alpha$ and the dimension, the diameter and the Ricci curvature of the manifold~$M$. 
\end{lem}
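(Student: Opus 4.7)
The argument is a standard Moser iteration tailored to the inhomogeneous differential inequality, with two delicate points: the right-hand side is not zero, and $f$ is only smooth on $\{f>0\}$.

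\medskip

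\emph{Step 1 (energy inequality).} Fix $p \geq 2$ and test $\Laplace f \leq \alpha$ against $f^{p-1}$. Since $f$ is only smooth where it is positive, I would first integrate on the open subset $\{f > \delta\}$, apply the divergence theorem on the closed manifold $M$ together with the product rule
\[
  \Laplace(f^{p}) \;=\; p\, f^{p-1}\Laplace f \;-\; p(p-1)\, f^{p-2}|\nabla f|^{2},
\]
and then let $\delta \searrow 0$ by dominated convergence. The outcome is
\[
  \frac{4(p-1)}{p^{2}} \int_{M} |\nabla f^{p/2}|^{2} \;\leq\; \alpha \int_{M} f^{p-1}.
\]

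\emph{Step 2 (Sobolev step).} Insert $u = f^{p/2}$ into the Riemannian Sobolev inequality
\[
  \Bigl(\int_{M} u^{2n/(n-2)}\Bigr)^{(n-2)/n} \;\leq\; C_{S}\Bigl(\int_{M} u^{2} + \int_{M} |\nabla u|^{2}\Bigr)
\]
on the closed manifold $M$; the Sobolev constant $C_{S}$ depends only on $n$, the diameter and a lower Ricci bound. Combined with Step~1 and H\"older's inequality on the finite-volume manifold to absorb the $L^{p-1}$ term, this produces a recursion
\[
  \|f\|_{L^{\sigma p}}^{p} \;\leq\; K_{p}\,(1+\alpha)\,\|f\|_{L^{p}}^{p}, \qquad \sigma \coloneqq \tfrac{n}{n-2} > 1,
\]
with $K_{p}$ polynomial in $p$.

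\emph{Step 3 (iteration).} Set $p_{k}=2\sigma^{k}$ and telescope. Since $\sum_{k} 1/p_{k} < \infty$, the product $\prod_{k}\bigl(K_{p_{k}}(1+\alpha)\bigr)^{1/p_{k}}$ converges; using $\|f\|_{\infty}=\lim_{k}\|f\|_{L^{p_{k}}}$ then yields
\[
  \|f\|_{\infty} \;\leq\; C\,\|f\|_{L^{2}}^{r}.
\]
In the homogeneous case $\alpha=0$ one recovers $r=1$; when $\alpha>0$, absorbing the inhomogeneity via H\"older at each step slightly lowers the exponent, and careful bookkeeping through the telescoping---as in~\cite[Lemma~0.1]{Petersen2003}---gives the stated range $r\in\bigl(e^{-(n-2)\ln(2)/2},\,1\bigr]=\bigl(2^{-(n-2)/2},\,1\bigr]$.

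\medskip

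\emph{Main obstacle.} The iteration itself is classical; the real difficulty splits into (i)~justifying the integration by parts where $f$ need not be smooth, which I would handle with the $(f-\delta)_{+}$ truncation and a Kato-type argument, and (ii)~tracking the constants sharply enough to recover the explicit lower bound on~$r$. Point~(ii) is the subtle one: a naive iteration that never interpolates loses too much in the inhomogeneous term, so one has to balance H\"older exponents at each step, which is precisely the accounting performed in the cited references.
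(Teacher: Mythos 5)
Your route is the standard Moser iteration, which is in fact all the paper relies on: the paper gives no proof of this lemma at all, quoting it as a combination of Petersen's book (Theorem 9.2.7) and the corrected Petersen--Sprouse lemma in the erratum, so your sketch reconstructs exactly the argument behind those cited sources (energy inequality from testing with powers of $f$, a Sobolev inequality whose constant is controlled by dimension, diameter and a lower Ricci bound, and telescoping over $p_k = 2\sigma^k$ with $\sigma = n/(n-2)$). Step 1 is fine, including the sign convention $\Delta f = -\operatorname{tr}(\operatorname{Hess} f)$ and the truncation on $\{f>\delta\}$: for almost every $\delta$ the level set is regular and the boundary term has the favourable sign, so the energy inequality survives the limit $\delta\searrow 0$.

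The one substantive caveat is the recursion displayed in Step 2. As written, $\|f\|_{L^{\sigma p}}^{p}\le K_p(1+\alpha)\|f\|_{L^p}^{p}$ is homogeneous of degree $p$ on both sides, and if it were true, telescoping would give $r=1$, contradicting your own Step 3. What Step 1 plus H\"older actually yields is the inhomogeneous recursion
\begin{equation}
  \|f\|_{L^{\sigma p}}^{p}\;\le\; C_S\Bigl(\|f\|_{L^p}^{p} + C\,p\,\alpha\,\operatorname{vol}(M)^{1/p}\,\|f\|_{L^p}^{p-1}\Bigr),
\end{equation}
and the $\|f\|_{L^p}^{p-1}$ term can only be absorbed into $\|f\|_{L^p}^{p}$ after normalizing $\|f\|_{L^p}\ge 1$ --- which is exactly unavailable in the paper's application, where the lemma is fed functions $f_{\epsilon,\rho}$ of $L^2$-norm smaller than $\epsilon$. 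It is precisely this homogeneity mismatch that produces $r<1$: in the small-norm regime each step costs a factor $(1-1/p_k)$ in the exponent, and combining $\sum_{k\ge 1} p_k^{-1} = (n-2)/4$ with $\ln(1-x)\ge -2x\ln 2$ on $[0,\tfrac12]$ is what gives the stated bound $r> e^{-(n-2)\ln(2)/2}$. You acknowledge this in Step 3 and in your closing paragraph and defer the bookkeeping to the erratum, which is legitimate (the paper does the same), but as written the proposal does not itself establish the exponent range, and the Step 2 display should be replaced by the inhomogeneous recursion above before the telescoping is carried out.
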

\begin{lem} \label{NablaUeSmallinInfinity}
  Let $\set{\ue}_{\epsilon >0}$ be a family of almost parallel sections. Then there exist constants~$C,r>0$ such that $\Inftynorm{\nabla \ue}< C \epsilon^r$ holds for all $\epsilon \in (0,1)$. 
\end{lem}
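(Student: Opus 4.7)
The plan is to reduce to the scalar Moser iteration of \cref{MoserIteration} by composing the $\A$-valued inner product with states on $\A$, applying the lemma to the resulting real-valued functions, and recovering the $\A$-norm via the identity $\Anorm{a}=\sup_{\phi}\phi\brackets{a}$ for $a\in \A^+$, where the supremum is over states of $\A$ of norm one. Concretely, for each state $\phi\colon \A \to \C$ I would define
\begin{equation}
  f_\epsilon^\phi\colon M\to \R^+,\quad f_\epsilon^\phi\brackets{p}\coloneqq \phi\brackets{\scalarproduct{\nabla \ue}{\nabla \ue}_p}.
\end{equation}
This is smooth and non-negative, since $\scalarproduct{\nabla\ue}{\nabla\ue}$ is a smooth $\A^+$-valued section and $\phi$ is bounded, $\C$-linear and positive on $\A^+$.

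The first step is a uniform upper bound on $\Laplace f_\epsilon^\phi$. Because $\phi$ is bounded $\C$-linear and hence commutes with covariant differentiation, $\Laplace f_\epsilon^\phi=\phi\brackets{\Laplace\scalarproduct{\nabla\ue}{\nabla\ue}}$, and a direct Bochner-type expansion writes $\Laplace\scalarproduct{\nabla\ue}{\nabla\ue}$ as a finite combination of $\A$-valued inner products involving $\nabla\ue$, $\nabla^2\ue$ and $\nabla^3\ue$, plus curvature terms. The almost-parallel hypothesis supplies constants $C_m$ with $\Inftynorm{\nabla^m\ue}\leq C_m$ for $m\in\set{0,1,2,3}$ and all $\epsilon\in(0,1)$; applying $\phi$ of norm at most one preserves these estimates. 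Hence there is $\alpha>0$, independent of $\epsilon\in(0,1)$ and of $\phi$, with $\Laplace f_\epsilon^\phi\leq \alpha$ on $M$.

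Next I would apply \cref{MoserIteration} to $f_\epsilon^\phi$ to obtain constants $\widetilde{C},r>0$, depending only on $\alpha$ and the dimension, diameter and Ricci curvature of $M$, with $\Inftynorm{f_\epsilon^\phi}\leq \widetilde{C}\Ltwonorm{f_\epsilon^\phi}^r$. The $\epsilon$-smallness enters via the $L^1$-estimate
\begin{equation}
  \int_M f_\epsilon^\phi\brackets{p}\intmathd p=\phi\Bigl(\int_M \scalarproduct{\nabla\ue}{\nabla\ue}_p\intmathd p\Bigr)\leq \Anorm[\Big]{\int_M \scalarproduct{\nabla\ue}{\nabla\ue}_p\intmathd p}= \Ltwonorm{\nabla\ue}^2<\epsilon^2,
\end{equation}
combined with the elementary interpolation $\Ltwonorm{f_\epsilon^\phi}^2\leq \Inftynorm{f_\epsilon^\phi}\cdot \int_M f_\epsilon^\phi\intmathd p$. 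Feeding these into the Moser estimate and solving for $\Inftynorm{f_\epsilon^\phi}$ (using $r\leq 1<2$) yields $\Inftynorm{f_\epsilon^\phi}\leq C\epsilon^{2r/(2-r)}$ with $C$ independent of $\phi$. Since $\Anorm{a}=\sup_\phi\phi\brackets{a}$ for $a\in\A^+$, taking the supremum over all states $\phi$ and over $p\in M$ gives
\begin{equation}
  \Inftynorm{\nabla\ue}^2=\sup_{p\in M}\Anorm[\big]{\scalarproduct{\nabla\ue}{\nabla\ue}_p}=\sup_{\phi,p}f_\epsilon^\phi\brackets{p}\leq C\epsilon^{2r/(2-r)},
\end{equation}
which is the claimed estimate with exponent $r/(2-r)>0$.

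The main obstacle is keeping the constants $\alpha$, $\widetilde{C}$ and $r$ uniform in the state $\phi$ so that the final supremum remains finite: this is precisely what the uniform pointwise $C^3$-bound on $\ue$ from the almost-parallel condition accomplishes, since the $\A$-norm bounds dominate every state-evaluated inner product appearing in the Bochner expansion of $\Laplace f_\epsilon^\phi$.
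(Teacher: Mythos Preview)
Your approach is correct and takes a genuinely different, somewhat more elementary route than the paper. The paper works with the \emph{square root} $f_{\epsilon,\rho}(p)=\rho(|\nabla\ue|_p^2)^{1/2}$, which has the advantage that $\Ltwonorm{f_{\epsilon,\rho}}^2=\int_M \rho(|\nabla\ue|^2)\leq \Ltwonorm{\nabla\ue}^2<\epsilon^2$ directly feeds into the Moser estimate; the price is that verifying $\Laplace f_{\epsilon,\rho}\leq\alpha$ on $\{f_{\epsilon,\rho}>0\}$ requires a Kato-type inequality (computing $\Laplace f^2$ two ways and using Cauchy--Schwarz for the state to cancel the $|\nabla\nabla\ue|^2$ term against $|\nabla f|^2$). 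You instead take $f_\epsilon^\phi=\phi(|\nabla\ue|^2)$ without the square root, so the Bochner identity $\Laplace\langle\nabla\ue,\nabla\ue\rangle=\langle\nabla^*\nabla(\nabla\ue),\nabla\ue\rangle+\langle\nabla\ue,\nabla^*\nabla(\nabla\ue)\rangle-2|\nabla\nabla\ue|^2$ gives $\Laplace f_\epsilon^\phi\leq\alpha$ immediately from the uniform $C^3$-bounds, with no Kato argument needed; the price is that only the $L^1$-norm of $f_\epsilon^\phi$ is small, and you close the loop with the interpolation $\Ltwonorm{f}^2\leq\Inftynorm{f}\cdot\norm{f}_{L^1}$ and a self-improving inequality. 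Both arguments are valid; yours avoids the Kato trick at the cost of an extra algebraic step and a slightly different exponent.

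One small point worth tightening: the Moser lemma as stated in the paper produces an exponent $r$ that a priori may depend on the function $f$ (hence on $\phi$ and $\epsilon$), but it is guaranteed to lie in $(e^{-(n-2)\ln 2/2},1]$. Since $r\mapsto 2r/(2-r)$ is increasing and $\tilde C^{2/(2-r)}$ is bounded for $r\in[r_0,1]$, your final bound $\Inftynorm{f_\epsilon^\phi}\leq \tilde C^{2/(2-r)}\epsilon^{2r/(2-r)}$ can be made uniform in $\phi$ by replacing $r$ with this lower bound $r_0$ depending only on $n$; you should say this explicitly rather than asserting that $r$ itself is independent of $\phi$.
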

\begin{proof}
	Let $\set{\ue}_{\epsilon>0}$ be a family of almost parallel sections. Note that the norm of a self-adjoint element $a\in\A$ is given by
  \begin{equation} \label{EqNormState1}
    \Anorm{a}=\max_{\rho \text{ state of }\A} \abs{\rho\brackets{a}},
  \end{equation}
  where a state of the $\Cstar$-algebra~$\A$ is a positive normalized functional $\rho\colon \A\to \C$ \cite[Section 3.3]{Murphy1990}. We define for any $\epsilon \in \brackets{0,1}$ and any state~$\rho$ of the $\Cstar$-algebra~$\A$ a function
  \mapdefoneline{f_{\epsilon, \rho}}{M}{\R^+}{p}{\rho\brackets[\big]{\abs{\nabla \ue}^2_p}^{1/2}.}
  Since all $\ue$ are smooth, all functions $f_{\epsilon, \rho}$ are continuous and smooth on $\set{f_{\epsilon, \rho}>0}$. We will show, that there exists a constant $\alpha>0$, independent of $\epsilon$ and the state $\rho$, such that 
  \begin{equation} \label{ConditionMoser}
    \Laplace f_{\epsilon, \rho}\brackets{p}\leq \alpha 
  \end{equation}
  holds for all $p\in M$ with $f_{\epsilon, \rho}\brackets{p}\neq 0$, all $\epsilon \in \brackets{0,1}$ and all states~$\rho$. Moser iteration (\cref{MoserIteration}) would be applicable and would give for a suitable constant $C>0$, independent of $\epsilon$ and $\rho$, and constants $r_{\epsilon, \rho}\in (e^{-\brackets{n-2}\ln\brackets{2}/2},1]$ 
  \begineqarray \label{MoserIterationForf}
      \rho \brackets[\big]{\abs{\nabla \ue}^2_p}\leq \Inftynorm{f_{\epsilon, \rho}}^2\leq C^2\Ltwonorm{f_{\epsilon, \rho}}^{2r_{\epsilon, \rho}}
      &=&C^2 \brackets{\int_M \rho\brackets[\big]{\abs{\nabla \ue}^2_p} \intmathd p}^{r_{\epsilon, \rho}} \\
      &\leq& C^2 \Ltwonorm{\nabla \ue}^{2r_{\epsilon, \rho}} \\
      &<& C^2 \epsilon^{2r}
  \endeqarray
  for all $p\in M$, all $\epsilon \in \brackets{0,1}$ and all states~$\rho$. Here we used \cref{EqNormState1}, the almost parallelity of~$\ue$ and $r\coloneqq e^{-\brackets{n-2}\ln\brackets{2}/2}<r_{\epsilon, \rho}$. This would yield immediately
  \begin{equation}
    \norm{\nabla \ue}^2_p
    = \max_{\rho \text{ state}} \rho\brackets[\big]{\abs{\nabla \ue}^2_p}
    \overset{\text{(\ref{MoserIterationForf})}}{<} 
    C^2 \epsilon^{2r}
  \end{equation}
  for all $p\in M$, and the lemma would be proved. It remains to show \cref{ConditionMoser}. Fix for the rest of the proof a point ${p\in M}$ with~$f\brackets{p}\neq 0$, $\epsilon \in \brackets{0,1}$, a state~$\rho$ and write~$f$ for~$f_{\epsilon, \rho}$. We calculate $\Laplace f^2\brackets{p}$ in two different ways:
  \begin{align} \label{Laplacef2TwoWays}
    \begin{split}
      \bullet \enspace 
        \Laplace f^2\brackets{p} 
        &= \rho \brackets{\Laplace \scalarproduct{\nabla \ue}{\nabla \ue}_p} \\
        &= \rho \brackets[\Big]{\scalarproduct{\nabla^*\nabla \brackets{\nabla \ue}}{\nabla \ue}_p+\scalarproduct{\nabla \ue}{\nabla^*\nabla \brackets{\nabla \ue}}_p} - 2\rho \brackets{\abs{\nabla \nabla \ue}^2_p} \\
      \bullet \enspace 
        \Laplace f^2\brackets{p}
        &= 2f\brackets{p}\Laplace f\brackets{p}-2\norm{\nabla f}_p^2 
    \end{split}
  \end{align}
  Here we used that~$\rho$ commutes with the Laplacian and the connection. Note that
  \begin{align}\label{HelpEstimateGeneralCstar}
    &\rho \brackets{a^*+a}^2= \brackets{\overline{\rho \brackets{a}}+\rho \brackets{a}}^2 \leq 4 \overline{\rho \brackets{a}}\rho \brackets{a}=4\rho\brackets{a^*}\rho\brackets{a} \quad \text{and}\\ \label{CSStates}
    &\rho\brackets[\big]{\scalarproduct{x}{y}}\rho\brackets[\big]{\scalarproduct{y}{x}}\leq \rho\brackets[\big]{\scalarproduct{x}{x}}\rho\brackets[\big]{\scalarproduct{y}{y}} % \quad \text{(Cauchy-Schwarz for states)}
  \end{align} 
  hold for all $a,b\in \A$ and all $x,y$ in a Hilbert $\A$-module~$P$ with inner product $\scalarproduct{\placeholder}{\placeholder}$. The estimate in (\ref{CSStates}) is the Cauchy-Schwarz inequality of the semi-inner product space $P$ with $\rho\brackets{\scalarproduct{\placeholder}{\placeholder}}$. Let $e_1,\cdots,e_n$ be a local orthonormal frame of~$TM$ around~$p$. We obtain
  \begineqarrayst \label{UseKato}
    4 f^{2}\brackets{p}\norm{\nabla f}_p^2 
    = \sum_{i=1}^n \brackets[\big]{\nabla_i\brackets{f^2\brackets{p}}}^2
    &=& \sum_{i=1}^n \rho \brackets[\big]{\scalarproduct{\nabla_i \nabla \ue}{\nabla \ue}+\scalarproduct{\nabla \ue}{\nabla_i \nabla \ue}}^2 \\
    &\overset{\text{(\ref{HelpEstimateGeneralCstar})}}{\leq}& 4\sum_{i=1}^n \rho \brackets[\big]{\scalarproduct{\nabla_i \nabla \ue}{\nabla \ue}} \rho \brackets[\big]{\scalarproduct{\nabla \ue}{\nabla_i \nabla \ue}} \\
    &\overset{\text{(\ref{CSStates})}}{\leq}& 4 \sum_{i=1}^n \rho\brackets{\abs{\nabla_i \nabla \ue}_p^2} \rho \brackets{\abs{\nabla \ue}_p^2} \\
    &=& 4 f^2\brackets{p} \rho\brackets{\abs{\nabla \nabla \ue}_p^2}
  \endeqarray
  Dividing \cref{UseKato} by $2 f^2\brackets{p}$ gives together with \cref{Laplacef2TwoWays}
  \begin{equation} \label{MainEstimate}
    2 f\brackets{p}\Laplace f\brackets{p} \leq \rho \brackets[\Big]{\scalarproduct{\nabla^*\nabla \brackets{\nabla \ue}}{\nabla \ue}_p+\scalarproduct{\nabla \ue}{\nabla^*\nabla \brackets{\nabla \ue}}_p}.
  \end{equation}
  Since the third covariant derivative of~$\ue$ is uniformly bounded in~$\epsilon \in \brackets{0,1}$ by the almost parallelity of~$\set{\ue}_{\epsilon>0}$, there exist positive constants $C_1$ and $\alpha$, independent of~$\epsilon$, such that 
  \begin{equation} \label{BoundInfinity}
    \rho \brackets{\abs{\nabla^*\nabla \brackets{\nabla \ue}}^2_p}\leq \norm{\nabla^*\nabla \brackets{\nabla \ue}}^2_p 
    \leq C_1 \Inftynorm[\big]{\nabla ^3 \ue} 
    \leq \alpha
  \end{equation}
  holds. Finally, we obtain with (\ref{MainEstimate}), (\ref{HelpEstimateGeneralCstar}), (\ref{CSStates}) and (\ref{BoundInfinity}) 
  \begin{equation} 
      \brackets[\Big]{2f\brackets{p}\Laplace f\brackets{p}}^2
      \leq 4 \rho \brackets{\abs{\nabla ^* \nabla \brackets{\nabla \ue}}^2_p} \rho \brackets[\Big]{\abs{\nabla \ue}^2_p}
      \leq 4 \alpha f^2\brackets{p}. 
  \end{equation}
  This proves \cref{ConditionMoser} and the lemma is proved. 
\end{proof}
\begin{lem} \label{AlmostConstantLem}
  Any family of almost parallel sections of~$\SpinBdl$ is almost constant in the sense of \cref{DefAlmost}. 
\end{lem}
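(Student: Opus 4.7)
The plan is to reduce the $\A$-valued estimate to a family of scalar estimates by testing against states of~$\A$, and then to control each scalar function by a strong form of the Poincar\'e inequality valid for functions with bounded gradient.

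First I would fix constants $C_0, r > 0$ with $\Inftynorm{\nabla \ue} < C_0 \epsilon^r$ as provided by \cref{NablaUeSmallinInfinity}, together with a constant $C_1$ bounding $\Inftynorm{\ue}$ uniformly in $\epsilon \in \brackets{0,1}$, which is just the $m = 0$ case of the almost parallel condition. For each state~$\rho$ of~$\A$, I would then consider the smooth real-valued function $f_{\epsilon,\rho}\colon M \to \R$, $p \mapsto \rho\brackets[\big]{\abs{\ue}^2_p}$, whose mean over $M$ is exactly $\rho\brackets{\overline{u}_\epsilon}$. Computing its gradient by the Leibniz rule and applying the state Cauchy--Schwarz estimates \cref{HelpEstimateGeneralCstar} and \cref{CSStates} already used in the proof of \cref{NablaUeSmallinInfinity} should yield the uniform bound $\Inftynorm{\nabla f_{\epsilon,\rho}} \leq 2 C_0 C_1 \epsilon^r$, independent of the state~$\rho$.

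Since~$M$ is compact with finite diameter~$D$, integrating $\nabla f_{\epsilon,\rho}$ along a minimizing geodesic then gives $\abs{f_{\epsilon,\rho}\brackets{p} - f_{\epsilon,\rho}\brackets{q}} \leq 2 C_0 C_1 D\, \epsilon^r$ for all $p, q \in M$, and averaging over $q \in M$ turns this into a pointwise bound of $f_{\epsilon,\rho}\brackets{p}$ against its mean $\rho\brackets{\overline{u}_\epsilon}$, still uniform in~$\rho$.

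Finally, since $\overline{u}_\epsilon - \abs{\ue}^2_p \in \A$ is self-adjoint (as the difference of two positive elements), I would invoke \cref{EqNormState1} to express its $\A$-norm as the supremum over states of $\A$, thereby transferring the uniform-in-$\rho$ scalar bound back to the required $\A$-valued estimate with the same exponent~$r$ as in \cref{NablaUeSmallinInfinity}. I do not anticipate a genuine obstacle; every step either already appears in the earlier material (state Cauchy--Schwarz, the Moser-type bound on $\nabla \ue$) or is a routine Riemannian-geometric ingredient (the mean value inequality on a closed manifold).
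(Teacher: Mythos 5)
Your proposal is correct and takes essentially the same route as the paper: reduce to the scalar functions $p\mapsto \rho\brackets[\big]{\abs{\ue}^2_p}$ over states~$\rho$, bound their gradients uniformly by $2\Inftynorm{\ue}\Inftynorm{\nabla \ue}$ via the state Cauchy--Schwarz inequalities and \cref{NablaUeSmallinInfinity}, and pass back to the $\A$-norm through \cref{EqNormState1} using self-adjointness of $\overline{u}_\epsilon-\abs{\ue}^2_p$. The only cosmetic difference is that you derive the $\Linftyblanc$-Poincaré-type oscillation bound by integrating along minimizing geodesics (constant the diameter), whereas the paper cites the corresponding inequality directly from Evans.
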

\begin{proof}
	Let $\set{\ue}_{\epsilon>0}$ be a family of almost parallel sections. There exists a constant $C_1>0$ such that for all $g\in W^{1,\infty}\brackets{M,\R}$ the Poincaré inequality 
  \begin{equation}
    \Inftynorm{\overline{g}-g}\leq C_1 \Inftynorm{\nabla g} \quad \text{with} \quad \overline{g}\coloneqq \frac{1}{\vol M}\int_M g \dvol
  \end{equation}
  holds \cite[Theorem 1 Section 5.8]{Evans2010}. Since all $\ue$ are smooth, the function 
  \mapdefonelinenoname{M}{\R^+}{p}{\rho \brackets[\big]{\abs{\ue}^2_p}}
  is an element of $W^{1,\infty}\brackets{M,\R}$ for any state $\rho \colon \A\to \C$ and any $\epsilon\in (0,1)$. Applying the Poincaré inequality to this function gives
  \begin{equation} \label{ApplicationPoincareInequality}
    \abs[\big]{\rho \brackets{\overline{u}_\epsilon}-\rho \brackets[\big]{\abs{\ue}^2_p}} 
    \leq C_1 \Inftynorm[\big]{\nabla \rho \brackets[\big]{\abs{\ue}^2_{\placeholder}}}
    \overset{\text{(\ref{TriangleCS})}}{\leq} 2 C_1 \norm{\ue}_\infty \norm{\nabla \ue}_\infty
    %\overset{\text{(\ref{EqNormState1})}}{\leq} C_1 \Inftynorm[\big]{\nabla \abs{\ue}^2_{\placeholder}}
  \end{equation}
  for all $p\in M$, all $\epsilon \in \brackets{0,1}$ and all states~$\rho$. The second step follows from
  \begin{equation} \label{TriangleCS}
    \norm[\big]{\nabla \rho\brackets[\big]{\abs{\ue}_{\placeholder}^2}}_q^2 
    \leq 4 \rho\brackets[\big]{\abs{\ue}_q^2}\rho\brackets[\big]{\abs{\nabla \ue}_q^2}
    \leq 4 \norm{\ue}_q^2 \norm{\nabla \ue}_q^2,
  \end{equation}
  which holds by the same considerations as in \cref{UseKato}. There exist, by the almost parallelity of~$\set{\ue}_{\epsilon>0}$ and \cref{NablaUeSmallinInfinity}, positive constants $r$, $C_2$ and $C_3$ such that ${\Inftynorm{\ue} < C_2}$ and $\Inftynorm{\nabla \ue} <C_3 \epsilon^r$ hold for all $\epsilon \in \brackets{0,1}$. This yields
  \begineqarray
    \Anorm[\big]{\overline{u}_\epsilon-\abs{\ue}^2_p} 
    \overset{\text{(\ref{EqNormState1})}}{=} \max_{\rho\text{ state}} \abs[\big]{\rho \brackets{\overline{u}_\epsilon}-\rho \brackets[\big]{\abs{\ue}^2_p}}
    &\overset{\text{(\ref{ApplicationPoincareInequality})}}{\leq}& 2 C_1 \Inftynorm{\ue} \Inftynorm{\nabla \ue} \\
    &<& 2 C_1 C_2 C_3 \epsilon^r
  \endeqarray
  for all $p\in M$ and all $\epsilon \in \brackets{0,1}$, hence the lemma is proved.
\end{proof}
%
%%%%%%%%%%%%%%%%%%%%%%%%%%%%%%%%%%%%%%%%%%%%%%%%%%%%%%%%%%%%%%%%%%%%%%%%%%%%%%%%%%%%%%%%%%%%%%%%%%%%%%%%%%%%%%%%%%%%%%%%%
%
\section{Non-trivial Rosenberg index and the Ricci curvature} \label{SectionZeroRicci}
In this section, we apply the techniques from \cref{SectionAlmostHarmonicSections} to give a direct proof of the well-known fact that any closed connected spin manifold with non-negative scalar curvature and non-vanishing Rosenberg index is Ricci flat.
\begin{thm} \label{ThmRicciFlat}
	Let $\brackets{M,g}$ be a closed connected spin manifold of dimension~$n$, $\A$ a unital Real $\Cstar$-algebra and $E\rightarrow M$ a Real flat bundle of finitely generated projective Hilbert $\A$-modules. Let~$\DE$ be the induced Dirac operator of the $\ComplexCl_n\tensgr \A$-linear Dirac bundle obtained by twisting the spinor bundle~$\SpinBdl$ by~$E$ as in \cref{ConstructionTwistBundle}. If $\scal\geq0$ and the higher index of $\DE$ is non-zero in $\KO_n\brackets{\A}$, the Ricci curvature vanishes. 
\end{thm}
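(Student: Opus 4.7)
The plan is to follow the strategy outlined in diagram~\eqref{OverviewImplications}: produce an almost constant family of sections of $\SE$ from the non-vanishing higher index, and then deduce Ricci flatness by pairing a Bochner-type commutator identity against such sections. Since $E$ is flat, the Schrödinger--Lichnerowicz formula for the twisted $\ComplexCl_n\tensgr\A$-linear Dirac operator reads $\DE^2=\nabla^*\nabla+\tfrac{\scal}{4}$ with no contribution from the twist curvature. Because $\ind\DE\neq 0$ in $\KO_n(\A)$, \cref{DiracInvertible}~(ii) implies that $\DE$ is not invertible, so \cref{AlmostHarmonicExistence} yields a family $\set{\ue}_{\epsilon>0}$ of almost $\DE$-harmonic sections. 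Integrating Schrödinger--Lichnerowicz against $\ue$ gives the $\A^+$-valued identity $\Ltwoabs{\nabla\ue}^2+\tfrac{1}{4}\int_M\scal(p)\abs{\ue}^2_p\intmathd p=\Ltwoabs{\DE\ue}^2$ with both summands on the left positive. Using $\scal\geq 0$ and taking $\A$-norms therefore yields $\Ltwonorm{\nabla\ue}<\epsilon$, and \cref{AlmostHarmonicImpliesAlmostParallel} followed by \cref{AlmostConstantLem} gives that the family is almost parallel and almost constant.

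I would first dispatch $\scal\equiv 0$: the same identity gives $\Anorm{\int_M\scal\abs{\ue}^2_p\intmathd p}<4\epsilon^2$, and almost constancy lets me replace $\abs{\ue}^2_p$ by the $p$-independent element $\overline{u}_\epsilon$ up to an $\A$-error of norm $O(\epsilon^r)$. Pulling $\overline{u}_\epsilon$ out of the integral and using $\Anorm{\overline{u}_\epsilon}=1/\vol M$ together with the fact that $\int_M\scal$ is a fixed real number gives $\int_M\scal=O(\epsilon^{\min(r,2)})$ for all $\epsilon\in(0,1)$; combined with $\scal\geq 0$ this forces $\scal\equiv 0$.

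The main step is to deduce $\Ric\equiv 0$. Since $E$ is flat, the curvature of $\SE=\SpinBdl\tensgr E$ reduces to the spinorial curvature of $M$, and the classical Bochner identity for the spin Dirac operator takes the form
\begin{equation*}
  \tfrac{1}{2}\Ric(X)\cdot\psi=\nabla_X\DE\psi-\DE\nabla_X\psi+T_X(\nabla\psi)
\end{equation*}
for any smooth vector field $X$ and any section $\psi$, where $T_X$ is a pointwise-bounded linear operation on $\nabla\psi$ depending on $\nabla X$ (and vanishing at a point where $\nabla X$ does). Pairing with a test section $\phi$ and integrating by parts to move all derivatives onto $\phi$, Cauchy--Schwarz for $\A$-valued inner products together with the bounds $\Ltwonorm{\DE\ue},\Ltwonorm{\nabla\ue}<\epsilon$ delivers $\Anorm{\Ltwoscalarproduct{\Ric(X)\cdot\ue}{\phi}}\leq C(X)\,\epsilon\,\Hnorm{1}{\phi}$ uniformly in $\epsilon$. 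Choosing $\phi=\Ric(X)\cdot\ue$, whose $H^1$-norm is uniformly bounded since $\norm{\ue}_{C^1}$ is uniformly bounded by almost parallelity, and using the Clifford identity $\abs{\Ric(X)\cdot\ue}^2_p=\abs{\Ric(X)}^2_p\abs{\ue}^2_p$ coming from $c(v)^*c(v)=\abs{v}^2$, I obtain $\int_M\abs{\Ric(X)}^2_p\abs{\ue}^2_p\intmathd p=O(\epsilon)$. Invoking almost constancy a second time, as in the scalar curvature step, yields $\int_M\abs{\Ric(X)}^2=O(\epsilon^{\min(r,1)})$; since the integral is independent of $\epsilon$, it vanishes, so $\Ric(X)\equiv 0$ for every smooth vector field $X$ and hence $\Ric\equiv 0$.

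The hardest ingredient is the third step: identifying the clean form of the commutator identity for $[\nabla_X,\DE]$, running the integration-by-parts argument in the $\A$-linear setting, and controlling the first-order remainder $T_X(\nabla\ue)$ by $\Ltwonorm{\nabla\ue}<\epsilon$ so that, together with almost constancy, the pointwise vanishing of $\Ric$ is forced at the limit $\epsilon\to 0$.
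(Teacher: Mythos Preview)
Your argument is correct and reaches the same conclusion, but the route to bounding $\Ltwonorm{\Ric(X)\cdot\ue}$ differs genuinely from the paper's. The paper first proves, in \cref{ParallelitySpecialSituation1}, an \emph{additional} second-order estimate $\Ltwonorm{\nabla^2\ue}<C\sqrt{\epsilon}$ (via the commutator formula of \cite[Lemma~A.3]{Ammann2007} for $\nabla$ and $\nabla^*\nabla$), and then uses the spinorial identity $\Ric(X)\cdot\ue=-2\sum_j e_j\cdot K_{X,e_j}\ue$ to bound $\Ltwonorm{\Ric(X)\cdot\ue}$ by $\Ltwonorm{\nabla^2\ue}+\Ltwonorm{\nabla\ue}$. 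You instead write this same identity in commutator form $\tfrac12\Ric(X)\cdot\psi=\DE\nabla_X\psi-\nabla_X\DE\psi+T_X(\nabla\psi)$ and pair against the test section $\phi=\Ric(X)\cdot\ue$, integrating by parts so that only $\Ltwonorm{\DE\ue}$ and $\Ltwonorm{\nabla\ue}$ appear; the $H^1$-norm of $\phi$ is controlled already by $\Ltwonorm{\ue}=1$ and $\Ltwonorm{\nabla\ue}<\epsilon$, so you never need the second-derivative bound. This is a more elementary variant: it dispenses with \cref{ParallelitySpecialSituation1}(2) and the external commutator lemma entirely. The almost-constancy step you use to pass from $\Anorm{\int_M\abs{\Ric(X)}^2_p\abs{\ue}^2_p\,\intmathd p}=O(\epsilon)$ to $\int_M\abs{\Ric(X)}^2=0$ is exactly the paper's step~$(\star)$, just run in the reverse direction. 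Your preliminary argument that $\scal\equiv 0$ is correct but unnecessary: it is not used later and follows anyway once $\Ric\equiv 0$.
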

Applying the previous theorem to the maximal group $\Cstar$-algebra of the fundamental group of~$M$, equipped with its natural Real structure, and the Mishchenko bundle yields: 
\begin{cor} \label{CorRicciFlat}
	Any closed connected spin manifold with $\scal\geq 0$ and non-vanishing Rosenberg index is Ricci flat. 
\end{cor}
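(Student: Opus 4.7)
The plan is to apply the almost $\DE$-harmonic / almost parallel / almost constant machinery of \cref{SectionAlmostHarmonicSections} to the twisted Dirac operator $\DE$, upgrade the resulting bounds via the Schrödinger--Lichnerowicz formula, and then read off the Ricci tensor from a secondary Bochner--Weitzenböck identity combined with the commutator of $\DE$ with covariant derivatives along vector fields.

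Since $\ind\DE \neq 0$ in $\KO_n(\A) \cong \KO_0(\ComplexCl_n \tensgr \A)$, the operator $\DE$ cannot be invertible, by the contrapositive of \cref{DiracInvertible}, so \cref{AlmostHarmonicExistence} yields a family $\{\ue\}_{\epsilon > 0}$ of almost $\DE$-harmonic sections. Flatness of $E$ makes the Schrödinger--Lichnerowicz identity $\DE^2 = \nabla^*\nabla + \scal/4$ valid; pairing with $\ue$ in the $\A$-valued $\Ltwoblanc$-inner product yields
\begin{equation*}
\Ltwoscalarproduct{\DE \ue}{\DE \ue} = \Ltwoscalarproduct{\nabla \ue}{\nabla \ue} + \frac{1}{4}\int_M \scal(p)\,\scalarproduct{\ue}{\ue}_p \intmathd p.
\end{equation*}
Since $\scal \geq 0$, all three summands are positive in $\A$ and each is dominated in $\A$-norm by $\Ltwonorm{\DE \ue}^2 < \epsilon^2$. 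In particular $\Ltwonorm{\nabla \ue} < \epsilon$, so by \cref{AlmostHarmonicImpliesAlmostParallel} the family is almost parallel and by \cref{AlmostConstantLem} it is almost constant. Combining $\Anorm{\int \scal \abs{\ue}^2} < 4\epsilon^2$ with $\Anorm{\abs{\ue}^2_p - \overline{u}_\epsilon} < C\epsilon^r$ and $\Anorm{\overline{u}_\epsilon} = 1/\vol M$ via the triangle inequality forces $\int_M \scal \intmathd p \to 0$, and $\scal \geq 0$ yields $\scal \equiv 0$.

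With $\scal \equiv 0$ the Schrödinger--Lichnerowicz formula reduces to $\nabla^*\nabla \ue = \DE^2 \ue$, so $\Ltwonorm{\nabla^*\nabla \ue} < \epsilon^2$. The Bochner--Weitzenböck identity applied to the section $\nabla \ue$ of $T^*M \tensgr \SE$ has the schematic form
\begin{equation*}
\int_M \abs{\nabla^2 \ue}^2_p \intmathd p = \int_M \abs{\nabla^*\nabla \ue}^2_p \intmathd p - \int_M \Ric(\nabla \ue, \nabla \ue)_p \intmathd p + \int_M \scalarproduct{\mathcal{R}\,\nabla \ue}{\nabla \ue}_p \intmathd p,
\end{equation*}
where $\mathcal{R}$ collects the spinor-bundle curvature and is pointwise bounded independently of $\epsilon$. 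Combined with $\Ltwonorm{\nabla \ue} < \epsilon$ this gives $\Ltwonorm{\nabla^2 \ue} = O(\epsilon)$. The commutator identity (which for flat $E$ reduces to the classical spinor identity on $M$) reads $[\DE, \nabla_X]\ue = \pm \tfrac{1}{2}\Ric(X)\cdot \ue + (\text{terms linear in } \nabla \ue)$, so
\begin{equation*}
\Ltwonorm{\Ric(X) \cdot \ue} \leq C\left(\Ltwonorm{\DE\, \nabla_X \ue} + \Ltwonorm{\nabla_X\, \DE \ue} + \Ltwonorm{\nabla \ue}\right) = O(\epsilon),
\end{equation*}
where the first term is controlled by $\Ltwonorm{\nabla^2 \ue}$, the second by the elliptic estimate $\Hnorm{1}{\DE \ue} \leq C(\Ltwonorm{\DE^2 \ue} + \Ltwonorm{\DE \ue})$, and the third by $\epsilon$. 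Since Clifford multiplication by a vector $v$ satisfies $\abs{v \cdot \psi}^2_p = \abs{v}^2_p \abs{\psi}^2_p$, the almost constancy of $\abs{\ue}^2$ implies
\begin{equation*}
O(\epsilon^2) = \Ltwonorm{\Ric(X) \cdot \ue}^2 = \Anorm[\Big]{\int_M \abs{\Ric(X)}^2_p\,\abs{\ue}^2_p \intmathd p} \longrightarrow \frac{1}{\vol M}\int_M \abs{\Ric(X)}^2_p \intmathd p
\end{equation*}
as $\epsilon \to 0$, so $\Ric(X) \equiv 0$ for every vector field $X$ and thus $\Ric \equiv 0$.

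The main obstacle is verifying both the Bochner--Weitzenböck identity on sections of $T^*M \tensgr \SE$ and the commutator identity $[\DE, \nabla_X]$ in the $\A$-linear setting, rather than for complex-valued spinors. Both are local algebraic identities combined with $\A$-valued integration by parts, so the classical derivations should carry over, but one must check carefully that the curvature operators, Clifford multiplication and the $\A$-valued inner product remain mutually compatible -- in the same spirit as the adaptations of the Sobolev and elliptic estimates in \cref{ClassicalEstimatesExtension} and of the Moser iteration in \cref{NablaUeSmallinInfinity} to the Hilbert $\A$-module setting.
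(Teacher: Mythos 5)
Your proposal is correct in substance and runs on the same machinery as the paper's proof of \cref{ThmRicciFlat}: non-vanishing of the higher index gives non-invertibility of $\DE$ (\cref{DiracInvertible}), hence a family of almost $\DE$-harmonic sections (\cref{AlmostHarmonicExistence}); the Schrödinger--Lichnerowicz formula with $\scal\geq 0$ gives $\Ltwonorm{\nabla\ue}<\epsilon$, so the family is almost parallel and almost constant (\cref{AlmostHarmonicImpliesAlmostParallel}, \cref{AlmostConstantLem}); a commutator/Weitzenböck argument bounds $\Ltwonorm{\nabla^2\ue}$; and almost constancy of $\abs{\ue}^2$ converts an $\Ltwoblanc$-bound on $\Ric\brackets{X}\cdot\ue$ into $\Ric\brackets{X}=0$. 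You deviate in two places. First, you extract an intermediate conclusion $\scal\equiv 0$ before bounding second derivatives; the paper skips this and works with $\Ltwonorm{\nabla^*\nabla\ue}=O\brackets{\epsilon}$ directly (cf.\ the estimate $(\star)$ in \cref{ParallelitySpecialSituation1}), so your step is a harmless shortcut that slightly cleans up constants. Second, to bring in the Ricci tensor you use the commutator $\sqbrackets{\DE,\nabla_X}$ together with the elliptic estimate for $\DE\ue$ in $\Hsp{1}{M,\SE}$, whereas the paper uses the pointwise identity $\Ric\brackets{X}\cdot u=-2\sum_j e_j\cdot K_{X,e_j}u$ and bounds $K_{e_i,e_j}\ue$ by $\Ltwonorm{\nabla^2\ue}$; these are essentially equivalent (your commutator identity is the contracted form of the paper's curvature identity), and both reduce to the same second-derivative bound.

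One quantitative slip, which does not affect correctness: your schematic Bochner identity for the section $\nabla\ue$ of $T^*M\tens\SE$ omits the term involving the \emph{derivative} of the bundle curvature, i.e.\ the summand $c_{12}\brackets{\nabla K}\ue$ in the commutator formula of \cite[Lemma A.3]{Ammann2007} used in \cref{ParallelitySpecialSituation1}. That term pairs $\ue$ (not $\nabla\ue$) against $\nabla\ue$, so it only contributes $O\brackets{\epsilon}$ to $\Ltwonorm{\nabla^2\ue}^2$, and the correct conclusion is $\Ltwonorm{\nabla^2\ue}=O\brackets{\sqrt{\epsilon}}$ rather than your claimed $O\brackets{\epsilon}$; accordingly $\Ltwonorm{\Ric\brackets{X}\cdot\ue}^2$ is $O\brackets{\epsilon}$ rather than $O\brackets{\epsilon^2}$. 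Since the argument only needs these quantities to tend to zero, and the final comparison with $\frac{1}{\vol M}\Ltwonorm{\Ric\brackets{X}}^2$ goes through exactly as in the paper's step $(\star)$ via the reversed triangle inequality, the proof stands.
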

Let us take a look at the proof of \cref{CorRicciFlat} in the case where the classical index of the spin Dirac operator~$\Dirac$ is non-zero: There exists a $\Ltwoblanc$-normalized $\Dirac$-harmonic section~$u$ of the spinor bundle~$\SpinBdl$, which is by the Schrödinger--Lichnerowicz formula even parallel and satisfies $\scalarproduct{u\brackets{p}}{u\brackets{p}}_p=\brackets{\vol{M}}^{-1}$ for all~$p\in M$. For a local orthonormal frame $e_1,\dots,e_n$ of~$TM|_U$ and any vector field~$X$ supported in~$U$, we obtain
\begin{equation} \label{ProofClassicalRicciFlatEq}
  \label{ClassicalProofRicc0}
  \Ltwonorm{\Ric \brackets{X}}
  =\sqrt{\vol \brackets{M}}\Ltwonorm{\Ric \brackets{X}\cdot u}
  \overset{\text{(\ref{RicCurvatureRealtion})}}{=} 2 \sqrt{\vol \brackets{M}} \norm[\big]{\sum_j e_j\cdot K_{X,e_j} u}_{\Ltwoblanc}
  =0,
\end{equation}
hence $\Ric \equiv 0$. Here \enquote{$\;\cdot\;$} denotes the Clifford multiplication by vector fields, and~$K$ denotes the curvature tensor of~$\SpinBdl$. We used in the second step
\begin{equation} \label{RicCurvatureRealtion}
  \Ric \brackets{X} \cdot u = -2 \sum_j e_j\cdot K_{X,e_j}u \quad \text{\cite[see][Section 3.1]{Friedrich2000}}.
\end{equation} \medbreak
The situation in \cref{ThmRicciFlat} gives rise to a family of almost $\DE$-harmonic section (\cref{DiracInvertible} and \cref{AlmostHarmonicExistence}). The proof of \cref{ThmRicciFlat} splits into two parts. First, we show that the family of almost $\DE$-harmonic sections is almost parallel by the extreme geometric situation, hence it is almost constant by \cref{AlmostConstantLem}. Second, we conclude that~$M$ is Ricci flat by similar calculations as for the corresponding classical statement explained in the previous paragraph.  
\begin{lem} \label{ParallelitySpecialSituation1}
  Assume a situation as in \cref{ThmRicciFlat}. Then there exists a family $\set{\ue}_{\epsilon >0}$ of almost constant sections. Moreover, $\set{\ue}_{\epsilon >0}$ can be chosen such that there exists a constant $C$ such that $\Ltwonorm[\big]{\nabla^2\ue}< C\sqrt{\epsilon}$ holds for all $\epsilon \in \brackets{0,1}$.
\end{lem}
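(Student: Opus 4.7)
The plan is to feed the family produced by \cref{AlmostHarmonicExistence} into the Schrödinger--Lichnerowicz formula and to extract both the almost-constancy statement and the refined $\nabla^2$-estimate via $\A$-valued Bochner-type manipulations. Since $\ind \DE\neq 0$, \cref{DiracInvertible}~(ii) shows that $\DE$ is not invertible and \cref{AlmostHarmonicExistence} yields a family $\{\ue\}_{\epsilon>0}$ of almost $\DE$-harmonic sections. As $E$ is flat, the Schrödinger--Lichnerowicz formula reduces to $\DE^2=\nabla^*\nabla+\scal/4$, and self-adjointness of~$\DE$ converts this into the $\A^+$-valued identity
\[
  \Ltwoscalarproduct{\DE \ue}{\DE \ue} = \Ltwoabs{\nabla \ue}^2 + \tfrac{1}{4}\int_M \scal\,\abs{\ue}_p^2 \intmathd p.
\]
Both summands on the right are positive, so passing to $\A$-norms and using $\scal\geq 0$ yields simultaneously $\Ltwonorm{\nabla \ue}\leq \Ltwonorm{\DE \ue}<\epsilon$ and $\Anorm{\int_M \scal\,\abs{\ue}_p^2\intmathd p}<4\epsilon^2$. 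The first bound is precisely the hypothesis of \cref{AlmostHarmonicImpliesAlmostParallel}, which shows that $\{\ue\}$ is almost parallel, and \cref{AlmostConstantLem} then promotes this to almost constant.

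For the refined bound $\Ltwonorm{\nabla^2 \ue}<C\sqrt{\epsilon}$, integration by parts applied to $\nabla \ue$ followed by Cauchy--Schwarz in the Hilbert $\A$-module $\Ltwo{M,T^*M\tens \SE}$ gives
\[
  \Ltwonorm{\nabla^2 \ue}^2 \leq \Ltwonorm{\nabla \ue}\cdot \Ltwonorm{\nabla^*\nabla(\nabla \ue)}.
\]
Since the first factor is already~$<\epsilon$, it suffices to control $\Ltwonorm{\nabla^*\nabla(\nabla \ue)}$ uniformly in $\epsilon\in(0,1)$. Commuting $\nabla^*\nabla$ past~$\nabla$ on sections of~$\SE$ contributes only bounded curvature expressions acting on $\nabla \ue$ and~$\ue$ (the bundle curvature of~$\SE$ reduces to that of~$\SM$ because $E$ is flat). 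The leading commuted term $\nabla(\nabla^*\nabla \ue)$ is then controlled using $\Ltwonorm{\nabla s}\leq \Ltwonorm{\DE s}$ (again via $\scal\geq 0$), the Schrödinger--Lichnerowicz formula, and the Leibniz rule $\DE\brackets{\scal\,\ue} = \scal\,\DE \ue + \operatorname{grad}(\scal)\cdot \ue$; this reduces everything to the uniformly bounded quantities $\Ltwonorm{\DE^3 \ue}<\epsilon^3$, $\max\scal\cdot\Ltwonorm{\DE \ue}<\max\scal\cdot\epsilon$, and $\max\abs{\operatorname{grad}(\scal)}$.

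The main obstacle is the last term $\operatorname{grad}(\scal)\cdot \ue$: since nothing yet forces~$\ue$ to concentrate in the zero set of~$\scal$, this contribution is only uniformly bounded in~$\epsilon$ and not $O(\epsilon)$. This is precisely why a direct argument cannot beat the exponent~$1/2$ in $\Ltwonorm{\nabla^2 \ue}<C\sqrt{\epsilon}$; fortunately $\sqrt{\epsilon}$ is all that is needed for the subsequent application in the proof of \cref{ThmRicciFlat}.
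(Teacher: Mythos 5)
Your proof is correct and follows the paper's strategy: the first half (non-invertibility of $\DE$ via \cref{DiracInvertible}, almost $\DE$-harmonic sections from \cref{AlmostHarmonicExistence}, the Schrödinger--Lichnerowicz inequality with $\scal\geq 0$ giving $\Ltwonorm{\nabla\ue}<\epsilon$, then \cref{AlmostHarmonicImpliesAlmostParallel} and \cref{AlmostConstantLem}) is exactly the paper's argument. For the second half the paper also starts from $\Ltwonorm{\nabla^2\ue}^2=\Anorm{\Ltwoscalarproduct{\brackets{\nabla^*\nabla}\nabla\ue}{\nabla\ue}}$ and the commutator formula of \cite[Lemma A.3]{Ammann2007}, but it then estimates the pairing term by term: the leading term is integrated by parts once more to give $\Ltwonorm{\nabla^*\nabla\ue}^2$, which is shown to be $O(\epsilon^2)$ via the refined bound $\Ltwonorm{\scal\cdot\ue}\leq 2\sqrt{\scal_{\max}}\,\epsilon$ (itself extracted from the Lichnerowicz identity), and the $O(\epsilon)$ loss comes from the $c_{12}\brackets{\nabla K}\ue$ term. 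You instead apply the Hilbert-module Cauchy--Schwarz inequality once, $\Ltwonorm{\nabla^2\ue}^2\leq \Ltwonorm{\nabla\ue}\cdot\Ltwonorm{\nabla^*\nabla\brackets{\nabla\ue}}$, so that only a uniform (in $\epsilon$) bound on $\Ltwonorm{\nabla^*\nabla\brackets{\nabla\ue}}$ is needed; you obtain it from the commutator terms plus $\Ltwonorm{\nabla s}\leq\Ltwonorm{\DE s}$ for $s=\nabla^*\nabla\ue$ and the Leibniz rule for $\DE\brackets{\scal\,\ue}$. This is a legitimate and slightly leaner way to close the estimate (it avoids the paper's intermediate $O(\epsilon)$ bound on $\Ltwonorm{\scal\cdot\ue}$, at the cost of invoking $\Ltwonorm{\DE^3\ue}<\epsilon^3$ and $\max\abs{\operatorname{grad}\scal}$), and both routes yield the claimed $C\sqrt{\epsilon}$; only your side remark about the obstruction to a better exponent differs from the paper, where the limiting term is $c_{12}\brackets{\nabla K}\ue$ rather than $\operatorname{grad}\brackets{\scal}\cdot\ue$, but this does not affect the validity of your argument.
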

\begin{proof}
The Dirac operator~$\DE$ is not invertible by \cref{DiracInvertible}, hence there exists by \cref{AlmostHarmonicExistence} a family $\set{\ue}_{\epsilon >0}$ of almost $\DE$-harmonic sections. Using the almost $\DE$-harmonicity gives together with the Schrödinger–Lichnerowicz formula, the $\Ltwoblanc$-self-adjointess of~$\DE$ and $\scal \geq 0$  
\begin{equation}
  \epsilon^2 
  > \Ltwonorm{\DE \ue}^2 
  =\Anorm[\big]{\Ltwoscalarproduct[\big]{\DE^2 \ue}{\ue}}
  = \Anorm[\Big]{\Ltwoabs{\nabla \ue}^2 + \frac{1}{4}\int_M \scal_p \abs{\ue}_p^2 \intmathd p}
  \geq \Ltwonorm{\nabla \ue}^2
\end{equation}
for all $\epsilon >0$. \cref{AlmostHarmonicImpliesAlmostParallel} gives that $\set{\ue}_{\epsilon >0}$ is almost parallel, hence almost constant by \cref{AlmostConstantLem}. For the proof of the second part, we fix an $\epsilon \in \brackets{0,1}$. By a commutator formula for $\nabla$ and $\nabla^*\nabla$ given in \cite[Lemma A.3]{Ammann2007}, we obtain
\begin{equation} \label{CommutatorFormula}
   \brackets{\nabla^*\nabla}\nabla\ue =
  \nabla\brackets{\nabla^*\nabla}\ue + \brackets{\Ric \tens \id}\nabla \ue + 2c_{12} \brackets{\id \tens K} \nabla \ue + c_{12}\brackets{\nabla K}\ue.
\end{equation}
Here~$K$ denotes the curvature tensor of the bundle~$\SE$, $c_{12}$ the metric contraction in the first and second slot, and $\Ric$ acts on a 1-form via pre-composition. Let $\scal_{\text{max}}$ be the maximum of the scalar curvature of~$M$. We obtain, by \cref{CommutatorFormula} together with the almost $\DE$-harmonicity, 
\begineqarray 
  \Ltwonorm{\nabla \nabla \ue}^2 
  &=& \Anorm{\Ltwoscalarproduct{\brackets{\nabla^*\nabla}\nabla \ue}{\nabla \ue}} \\
  &\overset{\text{(\ref{CommutatorFormula})}}{\leq}& 
  \Underbrace{\Ltwonorm{\nabla^*\nabla \ue}^2}{\overset{(\star)}{<} \brackets[\big]{1+\tfrac{1}{2}\sqrt{\scal_{\text{max}}}}^2\epsilon^2}
  + \Underbrace{\Anorm{\Ltwoscalarproduct{\brackets{\Ric \tens \id}\nabla \ue}{\nabla \ue}}}{\overset{(\star\star)}{\leq} C_1\Ltwonorm{\nabla \ue}^2} \\
  && + \Underbrace{\Anorm{\Ltwoscalarproduct{2c_{12}\brackets{\id \tens K}\nabla \ue}{\nabla \ue}}}{\overset{(\star\star)}{\leq} C_1\Ltwonorm{\nabla \ue}^2}
  + \Underbrace{\Anorm{\Ltwoscalarproduct{c_{12}\brackets{\nabla K}\ue}{\nabla \ue}}}{\overset{(\star\star)}{\leq} C_1\Ltwonorm{\nabla \ue}\Ltwonorm{\ue}} \\
  &<& C^2 \epsilon
\endeqarray 
for suitable constants $C_1,C>0$, which are both independent of~$\epsilon$. It remains to show the estimates $(\star)$ and $(\star\star)$. \medbreak
$(\star)$ We obtain, by the Schrödinger–Lichnerowicz formula, $\scal \geq 0$ and the almost $\DE$-harmonicity of the section~$\ue$,
  \begin{align} \label{FirstEstimateNablaNabla}
    &\Ltwoabs{\DE \ue}^2 
    = \Ltwoabs{\nabla \ue}^2+\textstyle\frac{1}{4}\displaystyle\Ltwoscalarproduct{\scal\cdot \ue}{\ue}
    \geq \textstyle\frac{1}{4}\displaystyle \Ltwoscalarproduct{\scal \cdot \ue}{\ue}
    \geq 0 \text{ and} \\ \label{SecondEstimateNablaNabla}
    &\Ltwonorm{\scal \cdot \ue}^2 
    %= \Anorm{\int_M \scal_p^2 \abs{\ue}^2_p \intmathd p}
    \leq \scal_{\text{max}} \Anorm[\Big]{\Underbrace{\int_M \scal_p \abs{\ue}^2_p \intmathd p}{=\Ltwoscalarproduct{\scal \cdot \ue}{\ue}}}
    \overset{\text{(\ref{FirstEstimateNablaNabla})}}{\leq} 4\scal_{\text{max}} \Ltwonorm{\DE \ue}^2
    < 4\scal_{\text{max}} \epsilon^2.
  \end{align}
  Equation $(\star)$ follows by the Schrödinger–Lichnerowicz formula, the triangle inequality, \cref{SecondEstimateNablaNabla}, the $\DE$-harmonicity of~$\ue$, and $\epsilon \in \brackets{0,1}:$
  \begin{equation}
    \Ltwonorm{\nabla^*\nabla \ue}
    = \Ltwonorm[\big]{\DE^2 \ue -\textstyle\frac{1}{4}\displaystyle\scal\cdot \ue}
    \leq \Ltwonorm[\big]{\DE^2 \ue} + \tfrac{1}{4}\displaystyle \Ltwonorm{\scal \cdot \ue}
    < \brackets{1+\tfrac{1}{2}\sqrt{\scal_{\text{max}}}}\epsilon.
  \end{equation} \medbreak
$(\star\star)$ The maps 
  \begin{align}
    &\Ric \tens \id, c_{12} \brackets{\id \tens K} \colon \Cinfty{M,T^*M\tens\SE} \to \Cinfty{M,T^*M\tens\SE} \text{ and} \\
    &c_{12}\brackets{\nabla K} \colon \Cinfty{M,\SE} \to \Cinfty{M,T^*M\tens \SE}
  \end{align}
  decent fiberwise to adjointable maps between the associated Hilbert $\A$-modules for all $p\in M$, hence they extend
  %by the compactness of~$M$ 
  to adjointable maps between the corresponding $\Ltwoblanc$-Hilbert $\A$-modules. By the Cauchy-Schwarz inequality, \cite[Proposition 1.2]{Lance1995} and the almost $\DE$-parallelity of the section $\ue$, we obtain for a suitable constant~$C_1$ the estimates in $(\star\star)$, and the lemma is proved. 
\end{proof}
\begin{proof}[Proof (\cref{ThmRicciFlat})]
	Fix an arbitrary vector field $X$ of the manifold~$M$ with support in a chart~$U$. Let $\set{\ue}_{\epsilon>0}$ be a family of almost constant sections as in \cref{ParallelitySpecialSituation1}. There exists a constant $C>0$ such that 
  \begin{equation} \label{EquationRicciFlat}
    \Ltwonorm{\Ric \brackets{X}} \overset{(\star)}{\leq} 2\sqrt{\vol M} \Ltwonorm{\Ric \brackets{X}\cdot \ue} \overset{(\star\star)}{<} C \sqrt{\epsilon}
  \end{equation}
  holds\footnote{See the analogy of \cref{EquationRicciFlat} to the classical calculation in \cref{ProofClassicalRicciFlatEq}.} for all sufficiently small $\epsilon >0$. Equation (\ref{EquationRicciFlat}) yields $\Ric \brackets{X}=0$ by taking $\epsilon \to 0$, hence~$M$ is Ricci flat. It remains to show $(\star)$ and $(\star\star)$.\medbreak
  (\hypertarget{EqStarProfZeroRicci}{$\star$}) 
    Since the family~$\set{\ue}_{\epsilon>0}$ is almost constant, there exist positive constants $C_1$ and $r$ such that
    \begin{equation} \label{EstimateUeCloseInProof}
      \Anorm[\big]{\overline{u}_\epsilon-\abs{\ue}_p^2}< C_1 \epsilon^r
    \end{equation}
    for all $p\in M$ and all $\epsilon \in \brackets{0,1}$. Fix an $\epsilon\in \brackets{0,1}$ with $4C_1 \vol M \epsilon^r \leq 3$, which is equivalent to 
    \begin{equation} \label{EpsilonSmallConditionEquivalence}
      \frac{1}{\vol M} - C_1 \epsilon^r \geq \frac{1}{4\vol M}.
    \end{equation}
    Multiplying both sides of \cref{EstimateUeCloseInProof} with $\norm{\Ric\brackets{X}}_p^2$ and integrating over $M$ gives
    \begin{equation} \label{UeAlomostConstant1}
      \int_M \norm{\Ric\brackets{X}}_p^2 \Anorm[\big]{\overline{u}_\epsilon-\abs{\ue}_p^2} \intmathd p \leq C_1\epsilon^r \Ltwonorm{\Ric\brackets{X}}^2.
    \end{equation}
    We estimate the left-hand side of \cref{UeAlomostConstant1} by taking the $\A$-norm out of the integral and using the reversed triangle inequality as follows:
    \begineqarrayst \label{UeAlomostConstant2}
        \int_M \norm{\Ric\brackets{X}}_p^2 \Anorm[\big]{\overline{u}_\epsilon-\abs{\ue}_p^2} \intmathd p 
        &\geq& \Anorm[\Big]{\int_M \norm{\Ric\brackets{X}}_p^2 \overline{u}_\epsilon \intmathd p-\int_M \norm{\Ric\brackets{X}}_p^2 \abs{\ue}_p^2 \intmathd p} \\
        &\geq& \abs{\Anorm[\Big]{\overline{u}_\epsilon \int_M \norm{\Ric\brackets{X}}_p^2  \intmathd p} - \Anorm[\Big]{\int_M \norm{\Ric\brackets{X}}_p^2 \abs{\ue}_p^2 \intmathd p}} \\
        &=& \abs[\Big]{\Underbrace{\frac{1}{\vol M}\Ltwonorm{\Ric \brackets{X}}^2-\Ltwonorm{\Ric \brackets{X}\cdot \ue}^2}{\eqqcolon \Theta}}
    \endeqarray
    In the last step, we used $\Anorm{\overline{u}_\epsilon}=\brackets{\vol M}^{-1}$, which holds by the $\Ltwoblanc$-normalization of~$\ue$ and the definition of $\overline{u}_\epsilon$ (\cref{DefAlmost}). Using that~$\epsilon$ satisfies \cref{EpsilonSmallConditionEquivalence} gives, by combining \cref{UeAlomostConstant1} with \cref{UeAlomostConstant2},
    \begin{equation}
      \Ltwonorm{\Ric \brackets{X}\cdot \ue}^2 \geq 
        \left\{\!\begin{aligned}
        &\brackets{\frac{1}{\vol M}-C_1 \epsilon^r} \Ltwonorm{\Ric \brackets{X}}^2 &\text{ if } \Theta >0,\\[1ex]
        &\frac{1}{\vol M} \Ltwonorm{\Ric \brackets{X}}^2 &\text{ if } \Theta \leq 0,\\[1ex]
        \end{aligned}\right\}
      \geq \frac{1}{4\vol M} \Ltwonorm{\Ric \brackets{X}}^2, 
    \end{equation}
    hence equation $(\star)$ is proved. \medbreak
  $(\star\star)$ We fix a local orthonormal frame $e_1,\dots, e_n$ of $TM|_U$, and we use the convention that indices $i$ and $j$ always run over $\set{1,\dots,n}$. The starting point for the proof of equation $(\star\star)$ is, as in the classical case, the formula in \cref{RicCurvatureRealtion} that relates the Ricci tensor with the curvature tensor of~$\SE$. For $X=\sum_i a^i e_i$ and $a\coloneqq \max_{i} \Inftynorm{a^i}$, we obtain 
  \begin{equation} \label{FirstPartOfStarStarProof}
    \Ltwonorm{\Ric \brackets{X}\cdot \ue} 
    = 2 \Ltwonorm[\Big]{\sum_{i,j} a^j e_j\cdot K_{e_i,e_j}\ue} 
    \leq 2n^2 a \max_{i,j} \Ltwonorm[\big]{e_j\cdot K_{e_i, e_j} \ue}.
  \end{equation}
  By \cref{ParallelitySpecialSituation1}, there exists a constant $C_2>0$ such that 
  \begin{align} \label{SecondPartOfStarStarProof}
    \begin{split}
      \Ltwonorm[\big]{e_j\cdot K_{e_i, e_j} \ue}
      %&= \Anorm[\Big]{\brackets[\Big]{\int_M \scalarproduct[\big]{K_{e_i,e_j}\ue}{-e_je_j\cdot K_{e_i,e_j}\ue}_p\intmathd p}^{1/2}} \\
      &= \Ltwonorm[\big]{K_{e_i,e_j}\ue} \\
      &\leq \Ltwonorm{\nabla_{i}\nabla_{j}\ue}+\Ltwonorm{\nabla_{j}\nabla_{i}\ue}+\Ltwonorm[\big]{\nabla_{\sqbrackets{i,j}}\ue} \\
      &< C_2 \sqrt{\epsilon}
    \end{split}
  \end{align}
  holds for all $\epsilon \in \brackets{0,1}$ and all $i,j$. Equation $(\star\star)$ follows by combining \cref{FirstPartOfStarStarProof} with \cref{SecondPartOfStarStarProof}. This proves \cref{ThmRicciFlat}. 
\end{proof}
%%%%%%%%%%%%%%%%%%%%%%%%%%%%%%%%%%%%%%%%%%%%%%%%%%%%%%%%%%%%%%%%%%%%%%%%%%%%%%%%%%%%%%%%%%%%%%%%%%%%%%%%%%%%%%%%%%%%%%%%%%%%%
\section{Extremality and rigidity for scalar curvature} \label{SectionExtensionGoetteSemmelmann}
Let $\brackets{M,g}$ and $\brackets{N,\overline{g}}$ be two closed connected Riemannian manifolds of dimension~$m$ and~$n$, respectively, and $f\colon M\to N$ a smooth \textit{area non-increasing spin map}, i.e.\ that
\begin{align}
  &\bullet\; \norm{v\wedge w}_p \geq \norm{f_* v \wedge f_* w}_{f\brackets{p}} \quad \forall p\in M \;\forall v,w\in T_pM \quad \text{and} \tag{\text{area non-increasing}}\\
  &\bullet \; w_1\brackets{TM}=f^*w_1\brackets{TN}, \quad w_2\brackets{TM}=f^*w_2\brackets{TN} \tag{\text{spin}}
\end{align}
holds. Here we use the induced norm on $\bigwedge^2 TM$ and $\bigwedge^2 TN$, and denote by~$w_1$ and~$w_2$ the first and second Stiefel--Whitney class, respectively. We equip the vector bundle $TM \oplus f^*TN$ with the metric~$h=g\oplus \brackets{-f^*\overline{g}}$ of signature $\brackets{m,n}$. By the naturality of the Stiefel--Whitney class and the preliminaries in \cref{SpinStructureSubsection}, the spin condition on~$f$ is equivalent to the existence of a spin structure on $\brackets{TM \oplus f^*TN,h}$. Fix such a spin structure. The construction in \cref{SpinStructureSubsection} gives rise to a $\ComplexCl_{m,n}$-linear spinor bundle $\SpinBdl\coloneqq \SM \tensgr f^*\SN \to M$. We denote the Clifford multiplications by  
\begin{equation} \label{CliffordMultiplicationsDefEq}
  c\colon TM\to \End\brackets{\SpinBdl}\quad \text{and} \quad
  \overline{c}\colon f^*TN\to \End\brackets{\SpinBdl}.
\end{equation}
Note that $c\brackets{v}$ is skew-adjoint, $c\brackets{v}^2=-g\brackets{v,v}$, $\overline{c}\brackets{w}$ is self-adjoint, and $\overline{c}\brackets{w}^2=\overline{g}\brackets{w,w}$ for all $v\in TM$ and all $w\in f^*TN$. In particular, the $\ComplexCl_{m,n}$-linear spinor bundle is a graded Real $\ComplexCl_{m,n}$-linear Dirac bundle $\SM \tensgr f^*\SN \to M$. Denote the induced Dirac operator by~$\Dirac$. \par
%Note that the Clifford action $\overline{c}$ is an additional structure which contains information about the manifold~$N$ and the map~$f$. 
%
The curvature tensor~$R^N$ of the manifold~$N$ induces a self-adjoint curvature operator~$\mathcal{R}_N$ on~$\bigwedge^2 TN$ satisfying locally
\begin{equation} \label{EqDefCurvatureOp}
  \overline{g}\brackets[\big]{\mathcal{R}_N\brackets{\overline{e}_i\wedge \overline{e}_j},\overline{e}_k\wedge \overline{e}_l} = -\overline{g}\brackets[\big]{R^N_{\overline{e}_i,\overline{e}_j}\overline{e}_k,\overline{e}_l}
\end{equation}
for all $i,j,k,l\in \set{1,\dots,n}$ and every local orthonormal frame $\overline{e}_1,\dots,\overline{e}_n$ of~$TN$. \textcite[Lemma 1.6]{Goette2000} showed that $\scal_M\geq \scal_N\op f$ implies $\scal_M=\scal_N \op f$ if the manifolds~$M$ and~$N$ are orientable, the curvature operator~$\mathcal{R}_N$ is non-negative, and $\SM \tensgr f^*\SN$ admits a non-zero $\Dirac$-harmonic section. Here non-negativity of the curvature operator means that all its eigenvalues are pointwise non-negative. 
%as an element of the $\Cstar$-algebra of bounded operators on $\Ltwoblanc\brackets[\big]{N,\bigwedge^2TN}$
Furthermore, they give a sufficient condition, involving estimates for the scalar and the Ricci curvature of $M$ and $N$, that~$f$ is a Riemannian submersion. \medbreak
In this section, we generalize this result to the situation, where we include the non-orientable case and replace the existence of a $\Dirac$-harmonic section on $\SM \tensgr f^*\SN$ by the existence of a family of almost $\DE$-harmonic sections. Here~$E\to M$ is a Real flat bundle of finitely generated projective Hilbert $\A$-modules for a unital Real $\Cstar$-algebra~$\A$, and $\DE$ denotes the induced Dirac operator of the $\ComplexCl_{m,n}\tensgr\A$-linear Dirac bundle $\SM \tensgr f^*\SN \tensgr E$. The main step in the proof will be that the almost $\DE$-harmonic sections are by the extreme geometric situation assumed in the theorem almost $\DE$-parallel, hence they are almost constant in the sense \cref{DefAlmost}. \par
Furthermore, we will introduce in \cref{HigherMappingDegree} the higher degree of $f$ in analogy to the $\hat{A}$-degree in \cite[Section 2.b]{Goette2000}. It turns out that the cut-and-paste principle for the higher index can be used to prove an index theorem (\cref{IndexThm}) similar to the classical one for the $\hat{A}$-degree \cite[see][Proof of Theorem 2.4]{Goette2000}. This yields that if the product of the higher degree of~$f$ with the Euler characteristic of~$N$ is non-zero, there exists a family of almost $\DE$-harmonic sections. This gives a generalization of \cite[Theorem 2.4]{Goette2000} involving the higher degree of $f$ instead of the $\hat{A}$-degree. \medbreak
Given manifolds~$M$, $N$ and a map $f\colon M\to N$ as in the beginning of this section, we denote the preimage of a regular value~$p$ under~$f$ by~$M_p$, the induced graded Real $\ComplexCl_{m,n}$-linear Dirac bundle $\SM\tensgr f^*\SN$ by~$\SpinBdl$, the induced Dirac operator by~$\Dirac$ and the Clifford multiplications given in \cref{CliffordMultiplicationsDefEq} by~$c$ and $\overline{c}$. Moreover, given a unital Real $\Cstar$-algebra~$\A$ and a Real flat bundle~$E$ of finitely generated projective Hilbert $\A$-modules, we denote the graded Real $\ComplexCl_{m,n}\tensgr\A$-linear Dirac bundle $\SpinBdl \tensgr E$ by~$\SE$ and the induced Dirac operator by~$\DE$. 
\begin{thm} \label{RigidityAlmostHarmonicSpinorThm}
  Let $f\colon M\to N$ be a smooth area non-increasing spin map between two closed connected Riemannian manifolds $\brackets{M,g}$ and $\brackets{N,\overline{g}}$ of dimension~$m$ and~$n$, respectively. Suppose the curvature operator of $N$ is non-negative, and there exists a unital Real $\Cstar$-algebra~$\A$ and a Real flat bundle~$E$ of finitely generated projective Hilbert $\A$-modules together with a family of almost $\DE$-harmonic sections. Then $\scal_M\geq \scal_N\op f$ on $M$ implies $\scal_M=\scal_N \op f$. If, moreover, $\scal_N>2\Ric_N>0$ (or~$f$ is distance non-increasing and $\Ric_N>0$), then $\scal_M\geq \scal_N\op f$ implies that~$f$ is a Riemannian submersion. 
\end{thm}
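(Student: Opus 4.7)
My strategy is to adapt the Goette-Semmelmann argument to the $\A$-linear, almost-harmonic setting, with the given family of almost $\DE$-harmonic sections playing the role of a genuine harmonic spinor.

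First I would establish the Bochner-Weitzenböck estimate
\begin{equation*}
  \DE^2 \geq \nabla^* \nabla + \tfrac{1}{4}\brackets{\scal_M - \scal_N \op f}
\end{equation*}
on the graded Real $\ComplexCl_{m,n}\tensgr\A$-linear Dirac bundle $\SM \tensgr f^*\SN \tensgr E$. Since $E$ is flat, its twist contributes nothing to the curvature endomorphism in the Weitzenböck formula, which therefore reduces to $\tfrac{1}{4}\scal_M$ plus a cross term built from $f^*R^N$ and the Clifford multiplications $c,\overline{c}$ of \cref{CliffordMultiplicationsDefEq}. Following \cite{Goette2000}, the cross term is bounded in absolute value by $\tfrac{1}{4}\scal_N\op f$ using non-negativity of the curvature operator of $N$ together with the area non-increasing property of $f$; this is a fibrewise algebraic identity unaffected by the passage to $\A$-linearity.

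Next, pairing the estimate with the almost $\DE$-harmonic family $\set{\ue}_{\epsilon>0}$ in the $\A$-valued $L^2$-inner product, taking $\A$-norms, and using monotonicity of the $\Cstar$-norm on positive elements yields
\begin{equation*}
  \epsilon^2 > \Ltwonorm{\DE \ue}^2 = \Anorm[\Big]{\Ltwoabs{\nabla \ue}^2 + \tfrac{1}{4}\int_M \brackets{\scal_M - \scal_N\op f}_p\abs{\ue}_p^2 \intmathd p} \geq \Ltwonorm{\nabla \ue}^2,
\end{equation*}
so in particular $\Ltwonorm{\nabla \ue} < \epsilon$. \cref{LemC} then upgrades $\set{\ue}_{\epsilon>0}$ to an almost parallel, hence almost constant, family: there exist $C, r > 0$ with $\Anorm[\big]{\overline{u}_\epsilon - \abs{\ue}_p^2} < C\epsilon^r$ for all $p\in M$ and all $\epsilon \in \brackets{0,1}$, where $\overline{u}_\epsilon \in \A^+$ has norm $1/\vol M$. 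Splitting $\abs{\ue}_p^2 = \overline{u}_\epsilon + a_{\epsilon, p}$ in the curvature integral and isolating the $\overline{u}_\epsilon$-piece, whose $\A$-norm equals $\vol\brackets{M}^{-1}\int_M \brackets{\scal_M - \scal_N\op f}$, shows that $\int_M \brackets{\scal_M - \scal_N\op f}\to 0$ as $\epsilon\to 0$ (the $\epsilon^r$-remainder gets absorbed for small $\epsilon$). Continuity and non-negativity of the integrand force $\scal_M = \scal_N\op f$ pointwise.

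The submersion statement is where I expect the main obstacle. Here I would strengthen the Bochner estimate, in the spirit of Goette-Semmelmann's treatment of the hypothesis $\scal_N > 2\Ric_N > 0$ (resp.\ the distance non-increasing variant under $\Ric_N > 0$), to produce a further non-negative correction term in $\DE^2 - \nabla^*\nabla - \tfrac{1}{4}\brackets{\scal_M - \scal_N\op f}$ whose pointwise vanishing is equivalent to $f$ being a Riemannian submersion. Pairing with $\ue$ and applying the same \enquote{test by $\overline{u}_\epsilon$ plus $C\epsilon^r$-remainder} device as above gives integrated vanishing of this correction as $\epsilon\to 0$, and continuity then yields pointwise vanishing of the submersion defect. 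The delicate step is to reinterpret Goette-Semmelmann's pointwise argument---which crucially exploited that a harmonic spinor they produced was nowhere vanishing---in terms of almost-constant families; but the almost-constancy of $\abs{\ue}^2$ around the fixed positive element $\overline{u}_\epsilon$ of $\A$-norm $1/\vol M$ is precisely what enables this translation.
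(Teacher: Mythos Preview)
Your proposal is correct and follows essentially the same approach as the paper. The one organizational difference worth noting: rather than treating the extremality and the submersion defect as two separate steps, the paper records from the outset a chain of inequalities $\DE^2 \geq \nabla^*\nabla + \psi h_1 \geq \nabla^*\nabla + \psi h_2 \geq \nabla^*\nabla$ (localized by a cut-off $\psi$), where $h_2 = \tfrac{1}{4}(\scal_M - \scal_N\circ f)$ and $h_1 = \tfrac{\scal_M}{4} - \tfrac{\scal_N\circ f}{8} - \tfrac{1}{8}\sum_{i,j}\mu_i^2\mu_j^2 (R^N_{ijji}\circ f)$ is exactly the intermediate Goette--Semmelmann quantity whose vanishing encodes the submersion property. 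Then a single application of the almost-constant device---precisely the inequality $\|\psi h_i\|_{L^1} \leq 2\vol(M)\Anorm{\Ltwoscalarproduct{\psi h_i \ue}{\ue}}$ for $\epsilon$ small, which is the same argument you sketch---forces both $h_1\equiv 0$ and $h_2\equiv 0$ simultaneously, and the rigidity conclusion is then read off from \cite[Section 1.c]{Goette2000} without further analytic work.
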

\begin{proof}
  We adapt the proof of \textcite[Lemma 1.6]{Goette2000}. In the first step, we recap the estimates of~$\DE^2$ using the Lichnerowicz formula for twisted Dirac operators, the non-negativity of the curvature operator of~$N$ and the fact that~$f$ is area non-increasing. In the second step, we show that the family of almost $\DE$-harmonic sections is almost constant, and we prove the extremality and rigidity statements. \par
  Suppose~$\scal_M\geq \scal_N\op f$ on~$M$. Fix open subsets~$U\subset M$ and~$\overline{U}\subset N$ trivializing~$TM$ and~$TN$, respectively, with $f^{-1}\brackets[\normalsize]{\overline{U}}=U$. Let~$e_1,\dots,e_m$ be a $g$-orthonormal frame of~$TU$ and $\overline{e}_1,\dots,\overline{e}_n$ a $\overline{g}$-orthonormal frame of~$T\overline{U}$ such that there exists non-negative $\mu_1,\dots,\mu_{\min\brackets{m,n}}\in \Cinfty{U,\R}$ with 
  \begin{equation}
    f_*e_i=
    \begin{cases}
      \mu_i \overline{e}_i &\text{if } i\leq \min\brackets{m,n} \\
      0 & \text{otherwise}.
    \end{cases}
  \end{equation}
  There exist locally compatible spin structures of~$M$ and~$N$ such that the~$\ComplexCl_{m,n}$-linear Dirac bundle~$\SpinBdl$ coincides with the graded tensor product of~$\SpinBdl M$ with~$f^*\SpinBdl N$, where~$\SpinBdl N$ has typical fiber~$\ComplexCl_{0,n}$. Using the Lichnerowicz formula for twisted spinor bundles (\cite[Theorem 8.17]{Lawson1989}) and the formula for the curvature transform (\cite[Theorem 4.15]{Lawson1989}) gives locally 
  \begin{equation} \label{BLWD2}
    \begin{split}
      \DE^2 
      = \nabla^*\nabla&+\frac{\scal_M}{4}+\brackets[\Bigg]{\Underbrace{\frac{1}{2}\sum_{i,j=1}^m c \brackets{e_i}c \brackets{e_j} \brackets{\id_{\SpinBdl M}\tensgr R_{e_i,e_j}^{f^* \SpinBdl N}}}{= \mathfrak{R}^{f^* \SpinBdl N}}}\tensgr \id_E \\
      \mathfrak{R}^{f^* \SpinBdl N}&= \frac{1}{8} \sum_{i,j=1}^m \sum_{k,l=1}^n \overline{g} \brackets{R_{f_*e_i,f_*e_j}^{N} \overline{e}_k,\overline{e}_l} c\brackets{e_i}c\brackets{e_j}\overline{c}\brackets{f^*\overline{e}_k}\overline{c}\brackets{f^*\overline{e}_l} \\
      &=-\frac{1}{8} \sum_{i,j=1}^m \sum_{k,l=1}^n \overline{g} \brackets{\mathcal{R}_N f_*\brackets{e_i\wedge e_j},\overline{e}_k\wedge \overline{e}_l} c\brackets{e_i}c\brackets{e_j}\overline{c}\brackets{f^*\overline{e}_k}\overline{c}\brackets{f^*\overline{e}_l}.
    \end{split}
  \end{equation}
  Here $R^{f^*\SpinBdl N}$ denotes the curvature tensor of~$f^*\SpinBdl N$ and $\mathcal{R}_N$ the curvature operator of~$N$ on~$\bigwedge^2TN$ as defined in \cref{EqDefCurvatureOp}. Note that the different behaviors of~$c$ and~$\overline{c}$ do not influence the calculations in \cite{Lawson1989}, since all calculations are in an orthonormal frame. By the non-negativity of the curvature operator~$\mathcal{R}_N$, there exists a self-adjoint square root~$L\in\End\brackets[\big]{\bigwedge^2 TN}$ satisfying
  \begin{equation}
    g \brackets[\big]{L\brackets{\overline{e}_i\wedge \overline{e}_j},L\brackets{\overline{e}_k\wedge \overline{e}_l}} = g \brackets[\big]{\mathcal{R}_N \brackets{\overline{e}_i\wedge \overline{e}_j},\overline{e}_k\wedge \overline{e}_l}
  \end{equation}
  for all $i,j,k,l\in \set{1,\cdots,n}$. The same estimates as in \cite[p.5-6]{Goette2000} yield locally
  %with respect to the partial order in the $\Cstar$-algebra of bounded operators on~$\Ltwo{M,\SpinBdl}$ 
  \begin{equation} \label{EstimateRestTermBLW}
    \begin{aligned} 
      &\frac{\scal_M}{4}-\frac{1}{8} \sum_{i,j=1}^m \sum_{k,l=1}^n \overline{g} \brackets{\mathcal{R}_N f_*\brackets{e_i\wedge e_j},\overline{e}_k\wedge \overline{e}_l} c\brackets{e_i}c\brackets{e_j}\overline{c}\brackets{f^*\overline{e}_k}\overline{c}\brackets{f^*\overline{e}_l} 
      && \\
      &\hspace{2cm} \geq \frac{\scal_M}{4}-\frac{\scal_N\op f}{8}-\frac{1}{8}\sum_{i,j=1}^n\mu^2_i\mu^2_j\brackets[\big]{R_{ijji}^N\op f}
      &\eqqcolon& h_1 \\
      &\hspace{2cm} \geq \frac{\scal_M-\scal_N \op f}{4}
      &\eqqcolon& h_2 \\
      &\hspace{2cm} \geq 0.&&
    \end{aligned}
  \end{equation}
  Let~$V$ be an open subset of~$U$ such that there exists a smooth cut-off function $\psi\colon M\to \R$, which is identically one on~$V$, zero outside~$U$ and $\image \brackets{\psi} \subset \sqbrackets{0,1}$. Then \cref{EstimateRestTermBLW} still holds for~$h_1$ and~$h_2$ replaced by~$\psi h_1$ and~$\psi h_2$, respectively, and we obtain together with \cref{BLWD2} globally 
  \begin{equation} \label{EstimateDsqGlobally}
    \DE^2\geq \nabla^*\nabla +\psi h_1\geq \nabla^*\nabla +\psi h_2\geq \nabla^*\nabla.
  \end{equation}
  By assumption, there exists a family~$\{\ue\}_{\epsilon>0}$ of almost $\DE$-harmonic sections, which satisfy by \cref{EstimateDsqGlobally} 
  \begin{equation} \label{Parallelityofue}
    \Ltwoabs{\DE \ue}^2 
    \geq \Ltwoabs{\nabla \ue}^2+\Ltwoscalarproduct{\psi h_1\cdot\ue}{\ue}
    \geq \Ltwoabs{\nabla \ue}^2+\Ltwoscalarproduct{\psi h_2\cdot \ue}{\ue}
    \geq \Ltwoabs{\nabla \ue}^2
  \end{equation}
  for all~$\epsilon>0$. It follows, by the almost $\DE$-harmonicity, \cref{Parallelityofue} and \cref{AlmostHarmonicImpliesAlmostParallel}, that~$\{\ue\}_{\epsilon>0}$ is almost parallel, hence almost constant by \cref{AlmostConstantLem}. By similar calculations as in the proof of \cref{ThmRicciFlat} part~(\hyperlink{EqStarProfZeroRicci}{$\star$}), we obtain 
  \begin{equation}
    \norm{\psi h_i}_{\Lblanc^1}
    \leq 2 \vol \brackets{M} \Anorm{\Ltwoscalarproduct{\psi h_i \ue}{\ue}}
    \overset{\text{(\ref{Parallelityofue})}}{\leq} 2 \vol \brackets{M} \Ltwonorm{\DE \ue}^2
    < 2 \vol \brackets{M} \epsilon^2
  \end{equation}
  for all sufficiently small~$\epsilon$ and $i=1,2$. This yields in the limit~$\epsilon \to 0$ 
  \begin{align} \label{Rigidity1}
    &h_1 = \frac{\scal_M}{4}-\frac{\scal_N\op f}{8}-\frac{1}{8}\sum_{i,j=1}^n\mu^2_i\mu^2_j\brackets[\big]{R_{ijji}^N\op f}
    \equiv 0 \text{ and}\\
    &h_2 = \frac{\scal_M-\scal_N \op f}{4} \label{Rigidity2}\equiv 0
  \end{align}
  on~$V$, hence the extremality statement is shown by \cref{Rigidity2}. Furthermore, \cref{Rigidity1} together with $\scal_M=\scal_N\op f$ implies $\sum_{i,j=1}^n\mu^2_i\mu^2_j\brackets[\big]{R_{ijji}^N\op f}=\scal_N\op f$ on~$V$, and the rigidity statement follows as in \cite[Section 1.c]{Goette2000}. 
\end{proof}
Let $f\colon M\to N$ be as in \cref{RigidityAlmostHarmonicSpinorThm}, and assume additionally that both manifolds are orientable and of dimension $m=n+k$ and~$n$, respectively. The $\hat{A}$-degree of~$f$ is given by 
\begin{equation}
  \Adeg\brackets{f}\coloneqq \brackets[\big]{\hat{A}\brackets{M}f^*\omega} \sqbrackets{M},
\end{equation}
where $\omega\in H^n\brackets{N;\Z}$ is the cohomological fundamental class of~$N$ corresponding to the orientation on~$N$, $\hat{A}\brackets{M}\in H^*\brackets{M;\Q}$ the $\hat{A}$-class of~$M$ and $\sqbrackets{M}\in H_m\brackets{M;Z}$ the fundamental class of~$M$ corresponding to its orientation. One sufficient condition for the existence of a non-trivial $\Dirac$-harmonic section is that the Euler characteristic of~$N$ and the $\hat{A}$-degree of~$f$ do not vanish \cite[Section 2.b]{Goette2000}. This follows by 
\begin{equation} \label{ClassicConnectionIndexADegree}
  \ind \Dirac =  \chi \brackets{N}\cdot\Adeg\brackets{f},
\end{equation}
and the fact that a non-zero classical index implies the existence of a $\Dirac$-harmonic section. We know, by \cref{DiracInvertible} (2) and \cref{AlmostHarmonicExistence}, that a non-vanishing higher index of~$\DE$ implies the existence of a family of almost $\DE$-harmonic sections. We want to define a higher version of the $\hat{A}$-degree such that \cref{ClassicConnectionIndexADegree} still holds for the higher index of an appropriate twisted Dirac operator.\par
Let us take a closer look at the $\hat{\A}$-degree. Fix a regular value~$p$ of~$f$ and pick a sufficiently small neighborhood~$U_p$ of~$p$ such that there exists an open subset $V_p\subset \R^k$ and an isomorphism $\Phi\colon f^{-1}\brackets{U_p} \to M_p\times V_p$ with $f\vert _{f^{-1}\brackets{U_p}}=\pr_1\op \Phi$. Let~$\omega$ be an $n$-form on~$N$ that represents the cohomological fundamental class of~$N$. Then we can rewrite the $\hat{A}$-degree of~$f$ as follows:
\begin{equation} \label{AhatDegreeRewrittenEq}
  \Adeg\brackets{f}
  %=\int_M \hat{A}\brackets{M}f^*\omega
  =\int_{M_p\times V_p}\hat{A}\brackets{M}f^*\omega
  =\int_{M_p}\hat{A}\brackets{M_p}
  =\ind \brackets[\big]{\Dirac_{\SpinBdl M_p}}
\end{equation}
The last step holds by the Atiyah-Singer index theorem, where~$\Dirac_{\SpinBdl M_p}$ is the spin Dirac operator of the spin manifold $M_p$ (see \cref{LemmaPreimageIsSpin}). This matches with the original notion of $\hat{A}$-degree by \textcite{Gromov1980}. We define in analogy to \cref{AhatDegreeRewrittenEq} the higher mapping degree: 
\begin{defi} \label{HigherMappingDegree}
  Let $f: M\to N$ be a smooth spin map between closed connected manifolds of dimension $n+k$ and $n$, respectively. The \textit{higher degree} of~$f$ is defined via
  \begin{equation}
    \hideg \brackets{f} \coloneqq \ind \brackets[\Big]{\Dirac_{\SpinBdl M_p\tensgr \mathcal{L}\brackets{M}\vert_{M_p}}} \in \KO_k\brackets[\big]{\Cstar{\pi_1\brackets{M}}}
  \end{equation}
  for a regular value~$p$ of~$f$. Here $\SpinBdl M_p$ denotes the spinor bundle of~$M_p$, $\mathcal{L}\brackets{M}$ the Mishchenko bundle of~$M$ and~$\Cstar{\pi_1\brackets{M}}$ the maximal group $\Cstar$-algebra of the fundamental group of~$M$. 
\end{defi}
The higher mapping degree is well-defined by the following lemma and by the bordism invariance of the higher index. 
\begin{lem} \label{LemmaPreimageIsSpin}
  Let $f\colon M\to N$ be a spin map between two closed manifolds and~$p$ a regular value of~$f$. Then the manifold~$M_p$ is orientable and admits a spin structure. 
\end{lem}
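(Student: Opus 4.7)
The plan is to compute the first two Stiefel--Whitney classes of $TM_p$ by comparing the splitting of $TM|_{M_p}$ with the spin condition imposed on $f$. The key geometric observation is that the normal bundle of $M_p\subset M$ is globally trivial, because $p$ is a regular value, so for every $x\in M_p$ the differential $df_x\colon T_xM\to T_pN$ is surjective with kernel $T_xM_p$, and a choice of complement (or the pullback of any linear splitting near $p$) identifies the normal bundle with the trivial bundle $M_p\times T_pN$. In particular, one obtains an isomorphism
\begin{equation}
  TM\vert_{M_p} \cong TM_p \oplus \brackets{M_p\times T_pN}.
\end{equation}

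Next, I would apply the Whitney sum formula to this decomposition. Since the trivial summand has vanishing Stiefel--Whitney classes, this gives $w_i\brackets{TM}\vert_{M_p} = w_i\brackets{TM_p}$ for $i=1,2$. On the other hand, the restriction $f\vert_{M_p}\colon M_p\to N$ is the constant map with value $p$, hence its pullback on cohomology in positive degree is zero, so $f^*w_i\brackets{TN}\vert_{M_p} = 0$ for $i=1,2$. Combining these two facts with the hypothesis that $f$ is a spin map, i.e.\ $w_i\brackets{TM}=f^*w_i\brackets{TN}$ for $i=1,2$, yields
\begin{equation}
  w_1\brackets{TM_p} = 0 \quad \text{and} \quad w_2\brackets{TM_p} = 0.
\end{equation}

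The vanishing of $w_1\brackets{TM_p}$ means that $M_p$ is orientable, and the vanishing of $w_2\brackets{TM_p}$ means that $M_p$ admits a spin structure, finishing the proof. There is no real obstacle here beyond the preliminary verification that the normal bundle is trivial; once that is in hand the argument reduces to a bookkeeping of $w_1$ and $w_2$ via naturality and Whitney sum.
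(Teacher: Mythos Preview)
Your proof is correct and follows essentially the same route as the paper: both use the splitting $TM\vert_{M_p}\cong TM_p\oplus f^*TN\vert_{M_p}$, the Whitney sum formula, and the spin hypothesis on~$f$ to force $w_1\brackets{TM_p}=w_2\brackets{TM_p}=0$. The only cosmetic difference is that you observe explicitly that $f^*TN\vert_{M_p}$ is trivial (since $f\vert_{M_p}$ is constant), whereas the paper keeps the terms $w_i\brackets{f^*TN\vert_{M_p}}$ on both sides of the Whitney sum identity and cancels them.
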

\begin{proof}
  Since~$p$ is a regular value of~$f$, the restriction of the tangent bundle to~$M_p$ splits $TM\vert_{M_p}=TM_p\oplus f^*TN\vert_{M_p}$. This gives
  \begin{align}
    &w_1\brackets{TM_p}+w_1\brackets{f^*TN\vert_{M_p}}=w_1\brackets{TM\vert_{M_p}}=w_1\brackets{f^*TN\vert_{M_p}} \quad \text{and} \\
    &w_2\brackets{TM_p}+w_2\brackets{f^*TN\vert_{M_p}}+w_1\brackets{TM_p}w_1\brackets{f^*TN\vert_{M_p}}=w_2\brackets{TM\vert_{M_p}}=w_2\brackets{f^*TN\vert_{M_p}}
  \end{align}
  by the spin property of~$f$ together with the naturality and the Whitney sum formula for the Stiefel--Whitney class. It follows that the first and second Stiefel--Whitney classes of~$TM_p$ vanish, hence the lemma is proved.  
\end{proof}
We will show, by the Poincaré--Hopf lemma and the cut-and-paste principle in \cite[Section 2.3]{Cecchini2020}, that the higher degree of~$f$ shows up in an analogous index formula as in \cref{ClassicConnectionIndexADegree}.
\begin{thm}[Index theorem] \label{IndexThm}
  Let $f\colon M\to N$ be a smooth spin map between two closed connected Riemannian manifolds of dimension~$m=n+k$ and~$n$, respectively, $\A$ a unital Real $\Cstar$-algebra and $E\to M$ a Real flat bundle of finitely generated projective Hilbert $\A$-modules. Then the higher index of the twisted Dirac operator $\DE$ is given by
  \begin{equation}
    \ind\brackets{\DE}=\chi\brackets{N} \cdot \ind \brackets[\Big]{\Dirac_{\SpinBdl M_p\tensgr E\vert_{M_p}}} \in \KO_k\brackets{\A}
  \end{equation}
  for any regular value~$p$ of~$f$. In particular, we obtain for the higher degree of~$f$
  \begin{equation}
    \ind \brackets[\big]{\Dirac_{\mathcal{L\brackets{M}}}}=\chi\brackets{N}\cdot \hideg\brackets{f}\in \KO_k\brackets{\Cstar \pi_1\brackets{M}}.
  \end{equation}
\end{thm}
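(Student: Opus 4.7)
The plan is to combine the Poincaré--Hopf lemma with the cut-and-paste principle for the higher index via a Witten-type perturbation of $\DE$. First, I would pick a smooth vector field $V$ on $N$ with only isolated non-degenerate zeros $p_1,\dots,p_\ell$ of local indices $\epsilon_j\in\{\pm 1\}$, chosen by a generic perturbation so that each $p_j$ is a regular value of $f$; the preimages $M_{p_j}$ then carry the induced spin structure from \cref{LemmaPreimageIsSpin} and Poincaré--Hopf gives $\chi(N)=\sum_j \epsilon_j$.

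Next, I would consider the perturbed operator $\DE^{(t)}\coloneqq \DE + t\,\overline{c}\brackets{f^*V}$ for $t>0$. Because generators of $\R^m$ and $\R^n$ in $\ComplexCl_{m,n}$ anticommute, the Clifford endomorphisms $c\brackets{v}$ and $\overline{c}\brackets{w}$ anticommute, so the cross term in the square is only zeroth order:
\begin{equation*}
  \brackets{\DE^{(t)}}^2 = \DE^2 + t^2\,\abs{V\op f}^2 + t\,R,
\end{equation*}
with $R$ a bounded self-adjoint bundle endomorphism. For $t\gg 0$ this is uniformly positive outside arbitrarily small neighborhoods $W_j\coloneqq f^{-1}\brackets{U_{p_j}}$ of the fibers $M_{p_j}$. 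Since the perturbation is bounded and of order zero on the closed manifold $M$, the straight-line homotopy $s\mapsto \DE + s\,\overline{c}\brackets{f^*V}$ lies within Fredholm operators, so $\ind\brackets{\DE^{(t)}}=\ind\brackets{\DE}$ by homotopy invariance of the higher index.

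The cut-and-paste principle of \cite[Section 2.3]{Cecchini2020} should now localize $\ind\brackets{\DE^{(t)}}$ to a sum of contributions, one from each $W_j$, by replacing each piece by a non-compact model $M_{p_j}\times \R^n$ on which $f$ is projection onto the second factor and $V$ is extended radially. On this product the Dirac bundle splits as the external graded tensor product of $\SpinBdl M_{p_j}\tensgr E\vert_{M_{p_j}}$ with the $\ComplexCl_{0,n}$-linear Euclidean spinor bundle, and $\DE^{(t)}$ becomes the external product of $\Dirac_{\SpinBdl M_{p_j}\tensgr E\vert_{M_{p_j}}}$ with the Witten-deformed Euclidean operator $\Dirac_{\R^n}+t\,\overline{c}\brackets{V}$. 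By the product formula for the higher index \cite[Appendix B]{Zeidler2022}, the contribution from $W_j$ factors as the product of these two indices; the Euclidean factor equals $\epsilon_j\in \KO_0\brackets{\C}\cong\Z$ by the classical local model at a non-degenerate zero, while the fiber factor equals $\ind\brackets[\big]{\Dirac_{\SpinBdl M_p\tensgr E\vert_{M_p}}}$ by bordism invariance, since any two regular fibers of $f$ over the connected base $N$ are spin cobordant through a generic arc. Summing gives the claimed formula, and the second statement of the theorem is the special case $\A = \Cstar\pi_1\brackets{M}$, $E=\mathcal{L}\brackets{M}$.

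The main obstacle will be the cut-and-paste step: one must build complete non-compact models whose geometry agrees with that of $M$ across $\partial W_j$, verify that after this replacement $\DE^{(t)}$ agrees (modulo operators that do not change the higher index) with the above external product, and justify the product formula in the $\A$-linear Real setting. A related subtlety is the identification of $\ind\brackets{\Dirac_{\R^n}+t\,\overline{c}\brackets{V}}$ with $\epsilon_j$ in $\KO_0\brackets{\C}$, which is a local Witten-deformation computation; once these two pieces are in place, everything else is assembled from results already recorded in \cref{PreliminariesSection} and from bordism invariance of the higher index.
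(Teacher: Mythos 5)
Your plan is essentially the paper's proof: perturb $\DE$ by $\lambda\,\overline{c}\brackets{f^*\xi}$ for a transversal vector field whose zeros are regular values of $f$, invoke Poincaré--Hopf, localize the index via the cut-and-paste principle to product models $\R^n\times M_{p_j}$, and finish with the product formula, a one-dimensional Witten/harmonic-oscillator computation giving the local signs, and bordism invariance to compare fibers over different regular values. The two obstacles you flag---building the matching non-compact models with uniformly positive squares away from the zeros, and the local index computation (carried out in the paper in $\KO_0\brackets{\ComplexCl_{1,1}}\cong\Z$ via Morita equivalence, using that $\overline{c}$ is self-adjoint so the perturbed operators stay formally self-adjoint)---are precisely the technical steps the paper supplies, so your outline is correct and follows the same route.
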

\begin{proof}
The main idea of the proof is to use the cut-and-paste principle in \cite[Section 2.3]{Cecchini2020} to split off the higher degree of~$f$ from the index of~$\DE$. The remaining factor can be identified with the Euler characteristic of~$N$ via the Poincaré--Hopf lemma if we perform the cut-and-paste principle with respect to a suitable vector field on~$N$. At first, we construct the vector field. Second, we show how this vector field gives rise to a situation in which the cut-and-paste principle is applicable. And finally, we verify the index formula. \par
Let~$\xi$ be a vector field on~$N$ transversal to the zero section, i.e.\ for any zero~$q$ of~$\xi$ and any chart~$\phi$ around~$q$ the differential of the induced map on Euclidean space has full rank at the point~$\phi\brackets{q}$. Such a vector field always exists, since transversality is a generic property. 
%by the transversality theorem. 
It follows that the set of zeros of~$\xi$ is discrete, hence finite by the compactness of~$N$. We deform the vector field without changing the transversality such that all zeros~$p_1,\dots,p_l$ of~$\xi$ are regular values of~$f$, which is possible because almost all points in~$N$ are regular values of~$f$ by Sard's theorem. Fix for all $i\in \set{1,\dots,l}$ a chart
\mapdefoneline{\phi_i}{N\supset U_i}{\phi_i\brackets{U_i}\subset \R^n}{p_i}{0,}
around~$p_i$ such that $\cap_{i=1}^l U_i=\emptyset$.
Fix $R>\tau>r>\epsilon>0$ such that $\Ball_{R+\epsilon}\brackets{0}\subset \phi_i\brackets{U_i}$ for all $i\in \set{1,\dots,l}$. Since~$\xi$ is transversal to the zero section, we can deform the vector field to a transversal vector field~$\xi_1$, which satisfies for suitable non-zero constants $\lambda_{ij}\in \R$
\begin{equation}
  \brackets[\Big]{\mathd_{\phi_i^{-1}\brackets{x}} \phi_i}\brackets[\Big]{\brackets{\xi_1\op \phi_i^{-1}}\brackets{x}} =
  \begin{cases}
    \diag\brackets{\lambda_{i1},\dots,\lambda_{in}}x & \text{if } x\in \Ball_{\epsilon}\brackets{0}, \\[1ex]
    \diag\brackets{\lambda_{i1},\dots,\lambda_{in}}\frac{x}{\norm{x}}\tau & \text{if } x\in \Ball_R\brackets{0}\setminus \Ball_r\brackets{0}
  \end{cases}
\end{equation} 
for all $i\in \set{1,\cdots,l}$ and has the same zeros as~$\xi$. By the Poincaré--Hopf lemma, the Euler characteristic of~$N$ is given by
\begin{equation} \label{EulerCharPoincareHopf}
  \chi\brackets{N}
  =\sum_{i=1}^l \deg\brackets{\xi_1,p_i}
  =\sum_{i=1}^l \sgn\brackets{\det \diag\brackets{\lambda_{i1},\dots,\lambda_{in}}} 
  = \sum_{i=1}^l \prod_{j=1}^n \sgn\brackets{\lambda_{ij}}.
\end{equation}
Here $\deg\brackets{\xi_1,p_i}$ denotes the local Poincaré--Hopf index of~$\xi_1$ at~$p_i$ \cite[Section 1.1]{Brasselet2009}. In the next step, we define the manifolds and bundles involved in the cut-and-paste process (see \cref{FigManifoldsCutAndPast}). 
\begin{figure} 
  \centering
  \includegraphics[width=\textwidth]{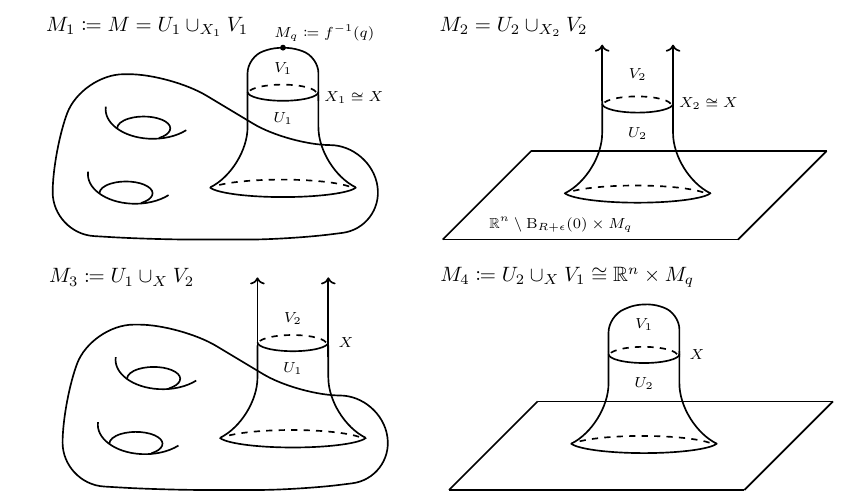} 
  \caption{An illustration of the manifolds involved in the cut-and-paste principle in the proof of the index theorem. Here the vector field~$\xi_1$ has just one zero $q\in N$.}
  \label{FigManifoldsCutAndPast}
\end{figure}
Since all~$p_i$ are regular values of~$f$, there exists for any $i\in\{1,\dots,l\}$ an isomorphism $\psi_i\colon \brackets{\phi_i\op f}^{-1}\brackets[\big]{\Ball_R\brackets{0}} \to \Ball_R\brackets{0}\times M_{p_i}$ compatible to~$\phi_i$ in the sense that 
\begin{center} \begin{tikzcd}
  \brackets{\phi_i\op f}^{-1}\brackets[\big]{\Ball_R\brackets{0}} \arrow{r}{f} \arrow[swap]{d}{\psi_i} \arrow{d}{\cong} & \phi_i^{-1}\brackets[\big]{\Ball_R\brackets{0}} \arrow{d}{\phi_i} \arrow[swap]{d}{\cong} \\
\Ball_R\brackets{0}\times M_{p_i} \arrow{r}{\pr_1} & \Ball_R\brackets{0}
\end{tikzcd} \end{center}
commutes. We define 
\begin{align} %\label{DefM1M2}
  M_1 \coloneqq & M = U_1 \cup_{X_1} V_1 \text{ with} \\
    & U_1 \coloneqq M\setminus \bigsqcup_{i=1}^l \brackets{\phi_i\op f}^{-1}\brackets[\big]{\interior{\Ball_\tau\brackets{0}}}, \quad 
    X_1 \coloneqq \bigsqcup_{i=1}^l \partial \brackets{\phi_i\op f}^{-1}\brackets[\big]{\Ball_\tau\brackets{0}} \text{ and } \\
    &V_1 \coloneqq \bigsqcup_{i=1}^l \brackets{\phi_i\op f}^{-1}\brackets[\big]{\Ball_\tau\brackets{0}},
   \\
  M_2 \coloneqq & \bigsqcup_{i=1}^l \brackets[\Big]{\underbrace{\R^n\setminus \interior{\Ball_r\brackets{0}}\cup_{\partial \Ball_r\brackets{0}}\partial \Ball_r\brackets{0}\times[0,\infty)}_{\cong S^{n-1}\times \R \eqqcolon N_2^i}}\times M_{p_i}=U_2\cup_{X_2}V_2 \text{ with} \\
    & U_2\coloneqq \bigsqcup_{i=1}^l \R^n\setminus \interior{\Ball_\tau \brackets{0}}\times M_{p_i},\quad X_2\coloneqq \bigsqcup_{i=1}^l \partial \Ball_\tau\brackets{0}\times M_{p_i} \text{ and}\\
    & V_2\coloneqq \bigsqcup_{i=1}^l \brackets[\Big]{\Ball_\tau\brackets{0}\setminus \interior{\Ball_r\brackets{0}}\cup_{\partial \Ball_r\brackets{0}}\partial\Ball_r\brackets{0}\times [0,\infty)}\times M_{p_i}.
\end{align}
Taking the union of the maps~$\psi_i$ and restrict properly gives an isomorphism between neighborhoods of~$X_1$ and~$X_2$
\begin{equation}
  \Gamma \colon
  \underbrace{\bigsqcup_{i=1}^l \brackets{\phi_i\op f}^{-1}\brackets[\big]{\interior{\Ball_R\brackets{0}}} \setminus \brackets{\phi_i\op f}^{-1}\brackets[\big]{\Ball_r\brackets{0}}}_{\eqqcolon \;\mathcal{U}\brackets{X_1}} \To{}
  \underbrace{\bigsqcup_{i=1}^l \interior{\Ball_R\brackets{0}}\setminus \Ball_r\brackets{0}\times M_{p_i}}_{\eqqcolon \;\mathcal{U}\brackets{X_2}},
\end{equation}
which restricts to a diffeomorphism from~$X_1$ to~$X_2$. Equip~$M_1, M_2$ and~$N$ with metrics turning~$\mathcal{U}\brackets{X_1}, \mathcal{U}\brackets{X_2}$ and $\bigsqcup_{i=1}^l \phi_i^{-1}\brackets[\big]{\interior{\Ball_R\brackets{0}}\setminus \Ball_r\brackets{0}}$ into tubular neighborhoods of $X_1, X_2$ and $\bigsqcup_{i=1}^l \partial \phi_i^{-1}\brackets[\big]{\Ball_\tau \brackets{0}}$, respectively, turning~$\Gamma$ into an isometry, and carrying on the part $\bigsqcup_{i=1}^l \R^n \setminus \Ball_{R+\epsilon}\brackets{0}\times M_{p_i}$ the product metric of the Euclidean metric and the induced metric on~$M_{p_i}$. We denote for $i\in \set{1,\dots,l}$ the projections of~$N_2^i\times M_{p_i}$ to the first and second factor by~$\pr_1^i$ and~$\pr_2^i$, respectively, and define 
\begin{equation}
  \pr_1\coloneqq \bigsqcup_{i=1}^l \pr_1^i\colon M_2 \To{} N_2 \coloneqq \bigsqcup_{i=1}^l N_2^i.
\end{equation}
Furthermore, we obtain by radial parallel transport an isomorphism
\begin{equation}
  \bigsqcup_{i=1}^l \brackets[\big]{\pr_2^i \op \psi_i \vert_{\mathcal{U}\brackets{X_1}}}^* E\vert_{M_{p_i}}\cong E\vert_{\mathcal{U}\brackets{X_1}}
\end{equation}
respecting the bundle metric and the connection. Consider the graded Real $\ComplexCl_{m,n}\tensgr \A$-linear Dirac bundles
\begin{align} \begin{split}
  &W_1\coloneqq \SE = SM \tensgr f^*SN\tensgr E \to M=M_1 \text{ and} \\
  &W_2\coloneqq SM_2\tensgr \bigsqcup_{i=1}^l \brackets[\big]{\pr_1^i}^*SN_2^i\tensgr \bigsqcup_{i=1}^l\brackets[\big]{\pr_2^i}^*E\vert_{M_{p_i}}\to M_2
\end{split} \end{align}
together with their induced Dirac operators~$\Dirac_1$ and~$\Dirac_2$, respectively. Here we consider $\SpinBdl N_2^i$ as the $\ComplexCl_{0,n}$-linear spinor bundle of~$N_2^i$. The bundle~$W_2$ comes by construction with a self-adjoint Clifford multiplication by the pullback of tangent vectors of~$N_2$ along~$\pr_1$, which we denote again by~$\overline{c}$. Furthermore, we define for $\lambda>0$ perturbed Dirac operators
\begin{equation}
  \mathrm{B}_1\coloneqq \Dirac_1+\lambda \overline{c}\brackets{f^*\xi_1} \text{ and } \mathrm{B}_2\coloneqq \Dirac_2+\lambda \overline{c}\brackets[\big]{\pr_1^*\xi_2}.
\end{equation}
Here $\xi_2$ denotes a nowhere vanishing vector field on $N_2$, which satisfies
\begin{equation}
  \xi_2\brackets{x}=
  \begin{cases}
    \diag\brackets{\lambda_{i1},\dots,\lambda_{in}}\frac{x}{\norm{x}}\tau & \text{if } x\in \Ball_R\brackets{0}\setminus \Ball_r\brackets{0}\subset N^i_2\subset N_2, \\[1ex]
    \diag\brackets{\lambda_{i1},\dots,\lambda_{in}}x & \text{if } x\in \R^n\setminus{\Ball_{R+\epsilon}\brackets{0}}\subset N^i_2 \subset N_2
  \end{cases}
\end{equation}
for all $i\in \set{1,\dots,l}$. The unbounded operators~$\mathrm{B}_1$ and~$\mathrm{B}_2$ are odd Real formally self-adjoint elliptic differential operators of first order. Here it is crucial that~$\overline{c}$ is self-adjoint and not skew-adjoint\footnote{This justifies that we consider the spinor bundle associated to $TM\oplus f^*TN$ equipped with the indefinite metric of signature~$\brackets{m,n}$ instead of the induced positive definite metric.}. By construction, there is a bundle isometry $\widetilde{\Gamma}\colon W_1\vert_{\mathcal{U}\brackets{N_1}} \to W_2\vert_{\mathcal{U}\brackets{N_2}}$ covering~$\Gamma$ and satisfying $\mathrm{B}_2=\widetilde{\Gamma}\op \mathrm{B}_1\op \widetilde{\Gamma}^{-1}$. Fix a manifold~$X$ isomorphic to $X_1\cong X_2$. Now we perform the cut-and-paste procedure and obtain graded Real $\ComplexCl_{m,n}\tensgr \A$-linear Dirac bundles
\begin{align} \begin{split}
  & W_3 \coloneqq W_1\vert_{U_1} \cup_X W_2\vert_{V_2} \to M_3\coloneqq U_1 \cup_X V_2 \text{ and}\\
  & W_4 \coloneqq W_2\vert_{U_2} \cup_X W_1\vert_{V_1} \to M_4\coloneqq U_2 \cup_X V_1
\end{split} \end{align}
with induced Dirac operators~$\Dirac_3$ and~$\Dirac_4$, together with perturbed Dirac operators
\begin{equation}
  \mathrm{B}_3\coloneqq \Dirac_3+\lambda \overline{c}\brackets{\xi_3} \text{ and } \mathrm{B}_4\coloneqq \Dirac_4+\lambda \overline{c}\brackets[\big]{\xi_4}.
\end{equation}
Here $\xi_3$ and $\xi_4$ are vector fields on~$M_3$ and~$M_4$, respectively, obtained from~$f^*\xi_1$ and~$\pr_1^*\xi_2$ via the cut-and-paste procedure. If~$\mathrm{B}_2^2$ is uniformly positive at infinity, the index formula in \cite[Theorem 2.12]{Cecchini2020} gives
\begin{equation} \label{CuttingIndexformula}
  \ind \mathrm{B}_1+\ind \mathrm{B}_2=\ind \mathrm{B}_3+\ind \mathrm{B}_4\in \KO_k\brackets{\A}.
\end{equation}
It remains to show that for a regular value~$p$ of~$f$ and a sufficiently big $\lambda>0$ the following holds:
\begin{enumerate}
  \item $\ind \DE=\ind \mathrm{B}_1$ 
  \item The operators $\mathrm{B}_2^2$ and $\mathrm{B}_3^2$ are uniformly positive and hence $\ind \mathrm{B}_2=\ind \mathrm{B}_3=0$. 
  \item $\ind \mathrm{B}_4 = \chi\brackets{N} \cdot \ind\brackets[\Big]{\Dirac_{SM_p\tensgr E\vert_{M_p}}}$ 
\end{enumerate} \medbreak
\textbf{Ad(1)} Since the graded Real $\ComplexCl_{m,n}\tensgr \A$-linear Dirac bundles $W_1\to M_1$ and $\SE\to M$ coincide except the involved metrics, the indices of the induced Dirac operators match. The index of the perturbed Dirac operator~$\mathrm{B}_1$ matches with the index of~$\DE$ by \cite[Proposition 4.1]{Cecchini2020b}. \medbreak
\textbf{Ad(2)} We show first that $\mathrm{B}_2^2$ is uniformly positive. Fix a local orthonormal frame $e_1,\dots,e_m$ of~$TM_2$. By inserting the definition of~$\mathrm{B}_2$ and using the local expression of the Dirac operator, we obtain
\begin{align} 
  \mathrm{B}_2^2 
  &= \Dirac_2^2 + \lambda \Dirac_2\overline{c}\brackets[\big]{\pr_1^*\xi_2}+\lambda \overline{c}\brackets[\big]{\pr_1^*\xi_2} \Dirac_2+ \lambda^2 \overline{c}\brackets[\big]{\pr_1^*\xi_2}^2 \\
  &= \Dirac_2^2+ \lambda \sum_{i=1}^m \brackets[\Big]{c\brackets{e_i}\nabla_{e_i}\overline{c}\brackets[\big]{\pr_1^*\xi_2}+\overline{c}\brackets[\big]{\pr_1^*\xi_2}c\brackets{e_i}\nabla_{e_i}}+\lambda^2 \norm[\big]{\pr_1^*\xi_2}^2_{\placeholder} \\
  &= \Dirac_2^2+\lambda \sum_{i=1}^m c\brackets{e_i}\overline{c}\brackets{\nabla_{e_i}\pr_1^*\xi_2}+\lambda^2 \norm[\big]{\pr_1^*\xi_2}^2_{\placeholder}.
\end{align}
Here we used in the last step the compatibility of the connection with the Clifford multiplication and the anti-commutativity of~$c$ and~$\overline{c}$. Since $c\brackets{e_i}\overline{c}\brackets[\big]{\nabla_{e_i}\pr_1^*\xi_2}$ is a self-adjoint operator on $\Ltwo{M_2,W_2}$ for all $i\in \set{1,\dots,m}$, we obtain with $C_1\coloneqq \max_{i\in \set{1,\dots,m}}\opnorm{c\brackets{e_i}\overline{c}\brackets[\big]{\nabla_{e_i}\pr_1^*\xi_2}}$ and $C_2\coloneqq \min_{p\in M_2}\norm[\big]{\pr_1^*\xi_2}^2_p$
\begin{equation} \label{EstimateLtwoB2}
  \begin{split}
  \Ltwoscalarproduct[\big]{\mathrm{B}_2^2u}{u}
  &= \Ltwoabs{\Dirac_2u}^2+\lambda \sum_{i=1}^m \underbrace{\Ltwoscalarproduct[\big]{c\brackets{e_i}\overline{c}\brackets[\big]{\nabla_{e_i}\pr_1^*\xi_2} u}{u}}_{\geq -C_1 \Ltwoabs{u}^2} +\lambda^2 \underbrace{\Ltwoscalarproduct[\big]{\norm[\big]{\pr_1^*\xi_2}^2u}{u}}_{\geq C_2 \Ltwoabs{u}^2}\: \\
  &\geq \brackets[\Big]{C_2\lambda^2-C_1m\lambda} \Ltwoabs{u}^2
  \end{split}
\end{equation}
for all $u\in \Hsp{2}{M_2,W_2}$ and all~$\lambda>0$. The vector field~$\xi_2$ does nowhere vanish by construction, hence~$C_2>0$. Therefore, we can choose~$\lambda$ sufficiently big such that by \cref{EstimateLtwoB2} the operator $\mathrm{B}_2^2$ is uniformly positive. We obtain by \cref{DiracInvertible} that the index of~$\mathrm{B}_2$ vanishes. Since the vector field obtained from~$\xi_1$ and~$\xi_2$ by following the cut-and-paste procedure vanishes nowhere, the same calculations yield $\ind \mathrm{B}_3=0$ for~$\lambda$ sufficiently big. This proves part~(2). \medbreak
\textbf{Ad(3)} Let~$\bar{\pr}_1^i$ and~$\bar{\pr}_2^i$ be the projections of $\R^n\times M_{p_i}$ onto the first and second factor, respectively, for all $i\in \set{1,\dots,l}$ and $\bar{\pr}_1$ the union of all $\bar{\pr}_1^i$. The $\ComplexCl_{m,n}\tensgr \A$-linear Dirac bundle $W_4\to M_4$ is given by
\begin{equation} \label{DefinitonW4}
  W_4=\bigsqcup_{i=1}^l \SpinBdl \brackets{\R^n\times M_{p_i}}\tensgr \brackets[\big]{\bar{\pr}_1^i}^*\overline{\SpinBdl} \R^n \tensgr \brackets[\big]{\bar{\pr}_2^i}^*E\vert_{M_{p_i}} \to M_4\cong\bigsqcup_{i=1}^l \R^n \times M_{p_i}
\end{equation}
with an appropriate metric on~$\R^n$. Here the isomorphism $M_4\cong\bigsqcup_{i=1}^l \R^n \times M_{p_i}$ is constructed from~$\psi_i$, and we write $\overline{\SpinBdl}\R^n$ for the $\ComplexCl_{0,n}$-linear spinor bundle of~$T\R^n$. Define a graded Real $\ComplexCl_{m,n}\tensgr \A$-linear Dirac bundle $\widetilde{W}_4\to M_4$ by using the Euclidean metric on~$\R^n$ and the product metric on~$\R^n\times M_{p_i}$. Denote its induced Dirac operator by~$\widetilde{\Dirac}_4$. Then the index of~$\mathrm{B}_4$ matches with the index of $\widetilde{\Dirac}_4+\lambda \overline{c}\brackets{\xi_4}$. Let~$\bar{\xi}$ be the vector field on $\bigsqcup_{i=1}^l \R^n$ defined via
\mapdefoneline{\bar{\xi}^i}{\R^n}{T\R^n}{x}{\diag\brackets{\lambda_{i1},\dots,\lambda_{in}}x}
for $i\in \set{1,\dots,l}$. Since the vector fields $\bar{\pr}_1^*\bar{\xi}$ and~$\xi_4$ match by construction outside a compact set, the operator family 
\begin{equation}
  \widetilde{\mathrm{B}}_4^t=\widetilde{\Dirac_4}+\lambda \bar{c}\brackets{\xi_4}+t\brackets{\lambda \bar{c}\brackets{\bar{\pr}_1^*\bar{\xi}}-\lambda \bar{c}\brackets{\xi_4}}
\end{equation}
is uniformly positive at infinity for all $t\in \sqbrackets{0,1}$. We obtain by \cite[Proposition 4.1]{Cecchini2020b} that the indices of $\widetilde{\mathrm{B}}_4^0$ and $\widetilde{\mathrm{B}}_4^1$ matches, hence the index of~$\mathrm{B}_4$ matches with the index of
\begin{equation}
  \widetilde{\mathrm{B}}_4=\widetilde{\Dirac}_4+\lambda \bar{c}\brackets{\bar{\pr}_1^*\bar{\xi}}.
\end{equation}
We use $\SpinBdl \brackets{\R^n\times M_{p_i}}\cong \SpinBdl \R^n\boxgr \SpinBdl M_{p_i}= \brackets[\big]{\bar{\pr}_1^i}^* \SpinBdl \R^n \tensgr \brackets[\big]{\bar{\pr}_2^i}^* \SpinBdl M_{p_i}$ 
%and the definition of the $\boxgr$-product 
and rewrite the bundle~$\widetilde{W}_4$ into
\begin{equation}
  \widetilde{W}_4 \cong \bigsqcup_{i=1}^l \brackets{\SpinBdl \R^n\tensgr \overline{\SpinBdl}\R^n}\boxgr \brackets{\SpinBdl M_{p_i}\tensgr E\vert_{M_{p_i}}} \to \bigsqcup_{i=1}^l \R^n\times M_{p_i}.
\end{equation}
We finally obtain, by sum and product formulas for the higher index,
\begineqarray \label{BigIndexCalculationEndOfProof}
  \ind \mathrm{B}_4 
  &=& \ind \brackets{\widetilde{\Dirac}_4+\lambda \bar{c}\brackets{\bar{\pr}_1^*\bar{\xi}}} \\
  &=& \sum_{i=1}^l \prod_{j=1}^n \underbrace{\ind \brackets{e_1\partial_x+\lambda \lambda_{ij}x\overline{e}_1}}_{\overset{(\star)}{=}\sgn\brackets{\lambda_{ij}}\in \Z\cong \KO_0\brackets{\ComplexCl_{1,1}}}\cdot \underbrace{\ind \brackets[\Big]{\Dirac_{SM_{p_i}\tensgr E\vert_{M_{p_i}}}}}_{\overset{(\star\star)}{=}\ind \brackets[\Big]{\Dirac_{SM_p\tensgr E\vert_{M_p}}}} \\
  &\overset{\text{(\ref{EulerCharPoincareHopf})}}{=}& \chi\brackets{N}\cdot \ind \brackets[\Big]{\Dirac_{SM_p\tensgr E\vert_{M_p}}}.
\endeqarray
Here the $"\cdot"$ denotes the K-theory product
\begin{equation}
  \KO_0\brackets{\ComplexCl_{n,n}}\times \KO_0\brackets{\ComplexCl_{k,0}\tensgr \A} \to \KO_0\brackets{\ComplexCl_{n+k,n}\tensgr \A}
\end{equation}
and the operator $e_1\partial_x+\lambda \lambda_{ij}x\overline{e}_1$ is considered as an operator from $\Hsp{1}{\R,\ComplexCl_{1,1}}$ to $\Ltwo{\R,\ComplexCl_{1,1}}$, where~$e_1,\overline{e}_1$ are generators of~$\ComplexCl_{1,1}$. Equation~$(\star\star)$ holds by the bordism invariance of the index. It remains to show equation~$(\star)$. We have an isomorphism of graded Real $\Cstar$-algebras $\rho\colon \ComplexCl_{1,1}\to \Mat_2\brackets{\C}\cong \Kom_{\C}\brackets{\C^2}$ given by
\renewcommand{\arraystretch}{1} \begin{equation}
  1\Mapsto \begin{pmatrix} 1&0\\0&1 \end{pmatrix}, \quad
  e_1\Mapsto \begin{pmatrix} 0&1\\-1&0 \end{pmatrix}, \quad
  \overline{e}_1\Mapsto \begin{pmatrix} 0&1\\1&0 \end{pmatrix}, \quad
  e_1\overline{e}_1\Mapsto \begin{pmatrix} 1&0\\0&-1 \end{pmatrix}, 
\renewcommand{\arraystretch}{2}\end{equation}
hence $\ComplexCl_{1,1}$ is strongly Morita equivalent to~$\C$. Here $\C$ is considered as a trivially graded $\Cstar$-algebra equipped with its natural Real structure, and $\C^2$ is the graded Real Hilbert space with $\brackets{1,0}^T$ even and $\brackets{0,1}^T$ odd. This implements as in \cite[Proposition 12.17]{Roe} an isomorphism on Real K-theory, which takes in the Fredholm picture the form
\mapdefonelinenoname
  {\KO_0\brackets{\ComplexCl_{1,1}}}
  {\KO_0\brackets{\C}}
  {\sqbrackets{\brackets{F,Q}}}
  {\sqbrackets{\brackets{F\tens id_{\C^2},Q\tens_{\rho}\C^2}}.}
Together with the isomorphism $\ComplexCl_{1,1}\tens_{\rho}\C^2\cong \C^2$ and $\KO_0\brackets{\C}\cong \Z$, we obtain
\begineqarrayst 
  \ind\brackets{e_1\partial_x+\lambda \lambda_{ij}x\overline{e}_1} &&\quad \in \KO_0\brackets{\ComplexCl_{1,1}} \\
  \quad\quad \widehat{=} \ind \renewcommand{\arraystretch}{1}\begin{pmatrix} 0&-\partial_x+\lambda \lambda_{ij}x\\ \partial_x+\lambda \lambda_{ij}x&0 \end{pmatrix}\renewcommand{\arraystretch}{2} &&\quad \in \KO_0\brackets{\C} \\
  \quad\quad \widehat{=}\dim\ker\brackets{\partial_x+\lambda \lambda_{ij}x}-\dim\ker\brackets{-\partial_x+\lambda \lambda_{ij}x}=\sgn\brackets{\lambda_{ij}} &&\quad \in \Z.
\endeqarray
Here we consider $\renewcommand{\arraystretch}{0.6}\begin{pmatrix} 0&-\partial_x+\lambda \lambda_{ij}x\\ \partial_x+\lambda \lambda_{ij}x&0 \end{pmatrix}\renewcommand{\arraystretch}{2}$ as an operator from $\Hsp{1}{\R,\C^2}$ to $\Ltwo{\R,\C^2}$, and we use in the last step 
\begin{equation}
  \ker\brackets{\partial_x+\lambda \lambda_{ij}x}=
  \begin{cases}
    \angles{e^{-\frac{1}{2}\lambda \lambda_{ij}x^2}} &\lambda_{ij}>0 \\
    0 &\lambda_{ij}<0
  \end{cases},\;
  \ker\brackets{-\partial_x+\lambda \lambda_{ij}x}=
  \begin{cases}
    0 &\lambda_{ij}>0 \\
    \angles{e^{\frac{1}{2}\lambda \lambda_{ij}x^2}} &\lambda_{ij}<0.
  \end{cases}
\end{equation}
Note that $e^{-\frac{1}{2}\lambda \lambda_{ij}x^2}$ is for $\lambda_{ij}<0$ not in $\Hsp{1}{\R,\C^2}$, hence not in the kernel of $\partial_x+\lambda \lambda_{ij}x$. This proves~$(\star)$ in \cref{BigIndexCalculationEndOfProof}, hence part~(3) is shown and the index theorem is proved. 
\end{proof} 
We combine \cref{RigidityAlmostHarmonicSpinorThm} with the index formula in \cref{IndexThm} and obtain the following extremality and rigidity statement:  
\begin{thm}[Extremality and Rigidity] \label{RigidityThmHilbertBundleE}
  Let $f\colon M\to N$ be a smooth area non-increasing spin map between two closed connected Riemannian manifolds of dimension~$n+k$ and~$n$, respectively. Suppose the curvature operator of~$N$ is non-negative and the following holds:
  \begin{enumerate}
    \item[(\mylabel{T}{T})] 
      There exists a unital Real $\Cstar$-algebra~$\A$ and a Real flat bundle~$E\to M$ of finitely generated projective Hilbert $\A$-modules such that for a regular value~$p$ of~$f$
      \begin{equation} \label{TopologicalConditionHigherDegree}
        \chi\brackets{N} \cdot \ind \brackets[\Big]{\Dirac_{\SpinBdl M_p\tensgr E\vert_{M_p}}} \neq 0 \in \KO_k\brackets{\A}.
      \end{equation}
  \end{enumerate}
  Then $\scal_M\geq \scal_N\op f$ on $M$ implies $\scal_M=\scal_N \op f$. If, moreover, $\scal_N>2\Ric_N>0$ (or~$f$ is distance non-increasing and $\Ric>0$), then $\scal_M\geq \scal_N\op f$ implies that~$f$ is a Riemannian submersion.  
\end{thm}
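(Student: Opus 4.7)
The plan is to read this theorem as a short synthesis of the two substantial ingredients developed earlier in \cref{SectionExtensionGoetteSemmelmann}: the index formula of \cref{IndexThm} and the geometric extremality/rigidity result of \cref{RigidityAlmostHarmonicSpinorThm}. Assumption~(\ref{T}) is phrased precisely so that the right-hand side of the index formula is non-zero, and the job of the proof is to translate this topological hypothesis into the existence of a family of almost $\DE$-harmonic sections on $\SE = \SM \tensgr f^*\SN \tensgr E$, which is exactly the input needed for \cref{RigidityAlmostHarmonicSpinorThm}.

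Concretely, I would first apply \cref{IndexThm} to the graded Real $\ComplexCl_{n+k,n}\tensgr\A$-linear Dirac bundle $\SE$ with the given flat bundle $E$, obtaining
\begin{equation}
  \ind\brackets{\DE} = \chi\brackets{N}\cdot \ind\brackets[\big]{\Dirac_{\SM_p\tensgr E\vert_{M_p}}} \in \KO_k\brackets{\A}.
\end{equation}
By hypothesis~(\ref{T}) the right-hand side is non-zero, so $\ind\brackets{\DE}\neq 0$ in $\KO_k\brackets{\A}\cong \KO_0\brackets{\ComplexCl_{n+k,n}\tensgr\A}$. Then the contrapositive of \cref{DiracInvertible}~(ii) forces $\DE$ to be non-invertible, and \cref{AlmostHarmonicExistence} produces a family $\set{\ue}_{\epsilon>0}$ of almost $\DE$-harmonic smooth sections of $\SE$.

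At this point the proof is essentially complete: the map $f$ satisfies every hypothesis of \cref{RigidityAlmostHarmonicSpinorThm} (area non-increasing spin map, non-negative curvature operator of $N$, existence of a family of almost $\DE$-harmonic sections for a suitable Real flat bundle $E$), so the extremality conclusion $\scal_M \equiv \scal_N\op f$ under $\scal_M\geq \scal_N\op f$ follows immediately, as does the submersion rigidity statement under either of the additional Ricci hypotheses. No additional calculations are required; one only has to verify that the $\Cstar$-algebra $\A$ and the flat bundle $E$ from~(\ref{T}) are admissible inputs for \cref{RigidityAlmostHarmonicSpinorThm}, which is tautological since \cref{RigidityAlmostHarmonicSpinorThm} was stated with exactly this generality.

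There is no real obstacle in this proof; the theorem is a clean corollary of the earlier work. The only mildly non-trivial point is keeping the algebraic bookkeeping straight between $\KO_0\brackets{\ComplexCl_{n+k,n}\tensgr\A}$ and $\KO_k\brackets{\A}$ under the $\ComplexCl_{n+k,n}$-linear structure of $\SE$, but this is a standard Bott-periodicity identification already used implicitly in the definition of the Rosenberg index in \cref{DefRosenbergIndexEq}. Hence the entire proof can be written as a one-paragraph chain of citations: apply \cref{IndexThm}, invoke~(\ref{T}), use \cref{DiracInvertible}~(ii) and \cref{AlmostHarmonicExistence} to manufacture the almost harmonic family, and conclude by \cref{RigidityAlmostHarmonicSpinorThm}.
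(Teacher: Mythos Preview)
Your proposal is correct and follows essentially the same route as the paper's own proof: apply \cref{IndexThm} so that condition~(\ref{T}) forces $\ind\brackets{\DE}\neq 0$, use \cref{DiracInvertible}~(ii) to conclude that $\DE$ is not invertible, invoke \cref{AlmostHarmonicExistence} to produce a family of almost $\DE$-harmonic sections, and then feed everything into \cref{RigidityAlmostHarmonicSpinorThm}. The paper's proof is exactly this chain of citations, written as a single short paragraph.
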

\begin{proof}
  Let $\DE$ be the twisted Dirac operator associated to the map~$f$ and the bundle~$E$ as explained before \cref{RigidityAlmostHarmonicSpinorThm}. Since the product of the Euler characteristic of~$M$ with the index of the twisted Dirac operator over the fiber of the point~$p$ does not vanish (\cref{TopologicalConditionHigherDegree}), we obtain by the index formula in \cref{IndexThm} that the index of $\DE$ does not vanish in $\KO_k \brackets{\A}$. By \cref{DiracInvertible} (2) the Dirac operator $\DE$ is not invertible, hence there exists a family of almost $\DE$-harmonic sections by \cref{AlmostHarmonicExistence}. The statements in the theorem follow from \cref{RigidityAlmostHarmonicSpinorThm}.  
\end{proof}
For certain types of manifolds, e.g.\ symmetric spaces of compact type, the estimate on the scalar and Ricci curvature are automatically fulfilled, hence the previous theorem can be simplified significantly. See the discussion in the end of Section~1 in \cite{Goette2000}.\par
We want to focus on the index-theoretical condition~(\ref{T}) and give in the next corollary three alternative versions of \cref{RigidityThmHilbertBundleE}. In the first version, we replace the topological condition~(\ref{T}) by a concrete condition involving the higher mapping degree. This seems to be less general, but \cref{WhyMishchenkoProp} shows that the higher mapping degree is the most general invariant for the purposes in \cref{RigidityThmHilbertBundleE}. In the second version, we replace the topological condition~(\ref{T}) on the higher mapping degree by a condition involving the Rosenberg index of the fiber over a regular value of~$f$. This provides a wider range of accessible applicability of the extremality and rigidity statement, since the Rosenberg index is studied for many classes of manifolds. For example, any enlargeable or area-enlargeable spin manifold has non-vanishing rational Rosenberg index. For this simplification, we have to require the existence of a \textit{retraction} of the fiber over a regular value~$p$ of~$f$ to the manifold~$M$ \textit{on the level of fundamental groups}. This is a homomorphism $r\colon \pi_1\brackets{M} \to \pi_1\brackets{M_p}$ satisfying $r\op \iota_*=id$, where~$\iota_*\colon\pi_1\brackets{M_p}\to \pi_1\brackets{M}$ is induced by the inclusion $\iota \colon M_p\xhookrightarrow{} M$. The third version contains the $\hat{A}$-degree of~$f$ and recovers the classical result by \textcite[Theorem 2.4]{Goette2000}. 
\begin{cor} \label{RigidityThmHigherDergeeRosenberg}
  \cref{RigidityThmHilbertBundleE} still holds if we replace the index-theoretical condition~(\ref{T}) by one of the following conditions: 
  \begin{enumerate}
    \item[(\mylabel{T1}{T1})] 
      The product of the Euler characteristic of~$N$ and the higher degree of~$f$ does not vanish in~$\KO_k\brackets[\big]{\Cstar{\pi_1\brackets{M}}}$. 
      \item[(\mylabel{T2}{T2})] 
      \begin{samepage} 
        There exists a regular value~$p$ of~$f$ such that $M_p$ is connected,
      \begin{equation} \label{IndexHypothesisinT2}
        \chi\brackets{N} \cdot \alpha\brackets{M_p} \neq 0 \in \KO_k\brackets{\Cstar \pi_1\brackets[\big]{M_p}},
      \end{equation}
      and there exists a retraction of~$M_p$ to the manifold~$M$ on the level of fundamental groups. 
      \item[(\mylabel{T3}{T3})] 
        The $\hat{A}$-degree of~$f$ and the Euler characteristic of~$N$ do not vanish. 
      \end{samepage}
  \end{enumerate}
  \end{cor}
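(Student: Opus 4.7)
The plan is to show that each of the three conditions (\ref{T1}), (\ref{T2}) and (\ref{T3}) implies the hypothesis (\ref{T}) of \cref{RigidityThmHilbertBundleE} for a suitable choice of $\A$ and $E\to M$, after which the conclusion follows directly from that theorem.

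For (\ref{T1}), I would take $\A = \Cstar \pi_1(M)$ and $E = \mathcal{L}(M)$ the Mishchenko bundle of $M$. Then $E\vert_{M_p}$ is by definition $\mathcal{L}(M)\vert_{M_p}$, so \cref{HigherMappingDegree} yields
\begin{equation}
  \ind\brackets[\Big]{\Dirac_{\SpinBdl M_p\tensgr E\vert_{M_p}}} = \hideg\brackets{f} \in \KO_k\brackets[\big]{\Cstar \pi_1\brackets{M}},
\end{equation}
so multiplying by $\chi(N)$ turns (\ref{T1}) into (\ref{T}) verbatim.

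For (\ref{T3}), I would take $\A = \C$ (trivially graded with its natural Real structure) and $E$ the trivial line bundle. Then $\Dirac_{\SpinBdl M_p \tensgr E\vert_{M_p}}$ is the plain spin Dirac operator of $M_p$, whose higher index in $\KO_k(\C)\cong \KO_k$ is the classical $\hat{A}$-genus $\hat{A}[M_p]\in\Z$ (or $\Z/2$ for $k\equiv 1,2\pmod 8$). The computation in \cref{AhatDegreeRewrittenEq} identifies this with $\Adeg(f)$, so (\ref{T3}) implies (\ref{T}).

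The main work is (\ref{T2}), where I would construct the desired flat bundle from the retraction $r\colon \pi_1(M)\to \pi_1(M_p)$ on fundamental groups. The universal property of the maximal group $\Cstar$-algebra extends $r$ to a unital graded Real $\Cstar$-algebra homomorphism $\overline{r}\colon \Cstar \pi_1(M) \to \Cstar \pi_1(M_p)$. Turn $\Cstar \pi_1(M_p)$ into a left $\pi_1(M)$-module through $\overline{r}$ and set
\begin{equation}
  E \coloneqq \widetilde{M} \times_{\pi_1(M)} \Cstar \pi_1\brackets{M_p} \To{} M,
\end{equation}
which is a Real flat bundle of finitely generated projective Hilbert $\Cstar \pi_1(M_p)$-modules. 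The key point will be to check that $E\vert_{M_p}\cong \mathcal{L}(M_p)$ as Real flat bundles. For this, I would use that $\iota^* E$ is the flat bundle associated to the composite representation $\overline{r}\circ\iota_*\colon \pi_1(M_p)\to \Cstar \pi_1(M_p)$; since $r\op \iota_*=\id_{\pi_1(M_p)}$ by assumption, this composite is simply the canonical inclusion $\pi_1(M_p)\hookrightarrow \Cstar \pi_1(M_p)$, which is exactly the representation defining $\mathcal{L}(M_p)$. (Here I would also use that the connected component $M_p$ is path-connected so that its fundamental group is well-defined, which is why we require $M_p$ to be connected.) Consequently
\begin{equation}
  \ind\brackets[\Big]{\Dirac_{\SpinBdl M_p\tensgr E\vert_{M_p}}} = \ind\brackets[\Big]{\Dirac_{\SpinBdl M_p\tensgr \mathcal{L}(M_p)}} = \alpha\brackets{M_p} \in \KO_k\brackets[\big]{\Cstar \pi_1(M_p)},
\end{equation}
and (\ref{IndexHypothesisinT2}) is precisely the instance of (\ref{T}) for this $\A$ and $E$. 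The hardest step to execute carefully will be verifying the isomorphism $\iota^* E\cong \mathcal{L}(M_p)$ together with its Real and flat structures; the retraction hypothesis is tailor-made so that the associated holonomy representations coincide. Once (\ref{T}) is established in each of the three cases, \cref{RigidityThmHilbertBundleE} delivers the extremality and rigidity statements without further argument.
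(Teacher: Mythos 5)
Your proposal is correct and follows essentially the same route as the paper: in each case you realize the hypothesis (\ref{T}) of \cref{RigidityThmHilbertBundleE} by choosing $E$ to be the Mishchenko bundle of $M$ for (\ref{T1}), the flat bundle $\widetilde{M}\times_{\pi_1(M)}\Cstar\pi_1(M_p)$ induced by the retraction for (\ref{T2}), and the trivial flat line bundle for (\ref{T3}). The only cosmetic difference is in (\ref{T2}), where you identify $E\vert_{M_p}\cong \mathcal{L}(M_p)$ via the holonomy representation $\rho\circ\iota_*$ of the pulled-back flat bundle and the identity $r\circ\iota_*=\id$, whereas the paper identifies a connected component of the preimage of $M_p$ in $\widetilde{M}$ with the universal cover of $M_p$; both arguments are valid and essentially equivalent.
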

  %Maybe for $M_p$ non-connected. Have to make sure that there is a global bundle that restricts to all connected components of $M_p$ to the Mishchenko bundle of this connected component. I am not sure what the right conditon on alle the Rosenberg index would be. For my applications this does not matter since for fiber bundles the fibers are connected in my cases. 
\begin{proof}
  The first and third part follow by choosing in \cref{RigidityThmHilbertBundleE} the bundle~$E$ equal to the Mishchenko bundle of~$M$ and the trivial flat line bundle $\C\times M\to M$, respectively. The product of the Euler characteristic of~$N$ and the $\hat{A}$-degree of~$f$ matches with the index in \cref{TopologicalConditionHigherDegree} by \cref{AhatDegreeRewrittenEq}, the isomorphism $\KO_0\brackets{\C}\cong \Z$, and the fact that twisting with the trivial flat line bundle does not change the index of the induced Dirac operator. It remains to show the second part of the theorem. Let~$p$ be as in (\ref{T2}) and $r\colon \pi_1\brackets{M} \to \pi_1\brackets{M_p}$ a retraction. We obtain a representation
  \begin{equation}
    \rho \colon \pi_1\brackets{M} 
    \To{r} \pi_1\brackets{M_p}
    \To{} \Lin_{\Cstar\pi_1\brackets{M_p}}\brackets{\Cstar\pi_1\brackets{M_p}}
  \end{equation}
  by left multiplication, which induces a Real flat bundle of finitely generated projective Hilbert $\Cstar\pi_1\brackets{M_p}$-modules
  \begin{equation}
    E\coloneqq \widetilde{M}\times_{\rho}\Cstar\pi_1\brackets{M_p}.
  \end{equation} 
  Since~$r$ is a retraction, $\iota_*$ is injective, hence any connected component of the preimage of~$M_p$ under the projection $\widetilde{M}\to M$ is a universal cover of~$M_p$ \cite[][proof of 1B.11]{Hatcher2001}. We fix such a connected component and denote it by~$\widetilde{M}_p$. We obtain an isomorphism
  \begin{equation} \label{IsoRestrictionEtoL}
    E\vert_{M_p}
    =\widetilde{M}\times_{\rho} \Cstar \pi_1\brackets{M_p} \vert_{M_p} 
    \cong \widetilde{M}_p \times_{\pi_1\brackets{M_p}}\Cstar \pi_1\brackets{M_p}
    = \mathcal{L}\brackets{M_p}.
  \end{equation}
  This gives, by the definition of the Rosenberg index in \cref{DefRosenbergIndexEq}, 
  \begin{equation}
    0 \neq \chi\brackets{N} \cdot \alpha\brackets{M_p} 
    = \chi\brackets{N} \cdot \ind \brackets[\Big]{\Dirac_{\SpinBdl M_p\tensgr \mathcal{L}\brackets{M_p}}}
    =\chi\brackets{N} \cdot \ind \brackets[\Big]{\Dirac_{\SpinBdl M_p\tensgr E\vert_{M_p}}},  
  \end{equation} 
  hence the topological condition (\ref{T}) in \cref{RigidityThmHilbertBundleE} is fulfilled, and the corollary is proved. 
\end{proof}
\begin{rem}[Relation between topological conditions] \label{ConnectionTT1T2}
  By the considerations in the proof of \cref{RigidityThmHigherDergeeRosenberg}, the index formula in \cref{IndexThm} and \cref{WhyMishchenkoProp}, the three topological properties in \cref{RigidityThmHilbertBundleE} and \cref{RigidityThmHigherDergeeRosenberg} are connected as follows:
  \begin{equation}
    (\ref{T2}) \quad \Longrightarrow \quad (\ref{T}) \quad \Longleftrightarrow \quad (\ref{T1}) \quad \Longleftarrow \quad (\ref{T3})
  \end{equation}
\end{rem} 
It follows some examples, where the topological hypothesis~(\ref{T2}) of \cref{RigidityThmHigherDergeeRosenberg} is fulfilled. The examples~(\ref{ExampleTorus}) and~(\ref{ExExoticSpehere}) are explicit situations in which the theorem by \textcite[Theorem 2.4]{Goette2000} is not applicable, since the $\hat{A}$-degree vanishes. 
\begin{ex}[product] \label{ExampleProduct}
  Let~$F$ and~$N$ be closed connected manifolds and $\pr_1\colon N\times F \to N$ the projection to the first factor. Suppose the first and second Stiefel Whitney classes of~$F$ vanish. Then $\pr_1$ is a spin map, and since~$N$ is a retract of~$N\times F$, the topological condition~(\ref{T2}) in \cref{RigidityThmHigherDergeeRosenberg} reduces to 
  \begin{equation} \label{IndexEqautionProductExample}
    \chi\brackets{N} \cdot \alpha\brackets{F} \neq 0 \in \KO_k\brackets{\Cstar \pi_1\brackets[\big]{F}}.
  \end{equation}
  This holds for example if the Euler characteristic of~$N$ does not vanish and the fiber~$F$ has rationally non-vanishing Rosenberg index,
  %i.e.\ \begin{equation}  \alpha\brackets{F}\otimes 1 \neq 0 \in \KO_0\brackets{\Cstar \pi_1\brackets{F}}\otimes_\Z \Q. \end{equation}
  e.g.:
  \begin{itemize}
    \item $F$ is enlargeable or area-enlargeable \cite[see][]{Hanke2007}.
    \item $F$ admits a metric of non-positive sectional curvature.
    \item 
      $F$ is aspherical
      %, i.e.\ its universal cover is contractible,
      and the strong Novikov conjecture holds for its fundamental group, i.e.\ the rational assembly map 
      \begin{equation}
        \nu\colon \KO_*\brackets{F}\otimes_\Z \Q \To{} \KO_*\brackets{\Cstar \pi_1\brackets{F}} \otimes_\Z \Q
      \end{equation}
      is injective \cite[][]{Rosenberg1983}. Note that for aspherical manifolds $F\cong B\pi_1\brackets{F}$. 
  \end{itemize}
  In the third example, the non-vanishing of the Rosenberg index in $\KO_0\brackets{\Cstar \pi_1\brackets{F}}\otimes_\Z \Q$ follows from the fact that the $\KO$-fundamental class of the closed spin manifold~$F$ is rationally non-zero and get mapped under the rational assembly map~$\nu$ onto $\alpha \brackets{M}$. The second example reduced to the first case, since any compact spin manifold that admits a metric of non-positive sectional curvature is enlargeable by the Cartan--Hadamard theorem \cite[Theorem 1.10]{Lee2018}. The first example can be verified by analyzing the proof of \textcite{Hanke2007} that any enlargeable or area-enlargeable spin manifold has non-vanishing Rosenberg index. They construct a unital $\Cstar$-algebra~$Q$ together with a group homomorphism 
  \begin{equation}
    \phi_*\colon \KO_0\brackets{\Cstar{\pi_1\brackets{F}}} \To{} \K_0\brackets{Q}
  \end{equation}
  such that the group $\K_0\brackets{Q}$ splits off a summand $\prod_{i\in \N} \Z\slash \bigoplus_{i\in \N} \Z$ and the component of $\phi_*\brackets{\alpha\brackets{F}}$ is non-zero in this summand. It follows that $\alpha \brackets{F}$ is rationally non-zero. 
\end{ex}
\begin{ex}[$n$-torus] \label{ExampleTorus}
  Since the $n$-torus is enlargeable, the rigidity statement in \cref{RigidityThmHilbertBundleE} applies to 
  \begin{equation}
    S^{2n} \times T^k \To{\pr_1} S^{2n} \quad \text{and} \quad \RP^{2n} \times T^k \To{\pr_1} \RP^{2n}.
  \end{equation} 
  The rigidity statement for the round sphere reads for $n\geq 2$ as follows: If $T^k \times S^{2n}$ is equipped with a metric $g$ such that $\pr_1$ is area non-increasing and $\scal_g\geq 2n\brackets{2n-1}$, then~$\pr_1$ is a Riemannian submersion with $\scal_g=2n\brackets{2n-1}$.
\end{ex}
\begin{ex}[exotic sphere] \label{ExExoticSpehere}
  Let $\Sigma^{8k+j}$ be an exotic sphere with $j\in \set{1,2}$ and non-vanishing Hitchin invariant \cite[see][]{Hitchin1974}, and let~$f\colon M\to N$ be a smooth area non-increasing spin map between two closed connected Riemannian manifolds. Then the condition~(\ref{T2}) in \cref{RigidityThmHigherDergeeRosenberg} holds if the Euler characteristic of~$N$ is odd and there exists a regular value~$p$ of~$f$ such that the fiber~$M_p$ is isomorphic to~$\Sigma^{8k+j}$. This follows from $\pi_1\brackets{\Sigma^{8k+j}}=0$ and $\chi\brackets{N}\cdot \alpha\brackets{\Sigma^{8k+j}}\neq 0\in \Z_2$. The latter holds by Bott periodicity $\KO_{8k+j}\brackets{\C}\cong \Z_2$ \cite{Atiyah1964} and the fact that the Rosenberg index of a simply connected manifold matches with the Hitchin invariant. Maps of this type are for example 
  \begin{equation}
    \RP^{2n} \times \Sigma^{8k+j} \To{\pr_1} \RP^{2n} \quad \text{and} \quad \CP^{2n} \times \Sigma^{8k+j} \To{\pr_1} \CP^{2n}.
  \end{equation}
  Here the manifolds $\RP^{2n}$ and $\CP^{2n}$ are equipped with metrics of non-negative curvature operator and $\scal>2\Ric>0$. Such metrics are given for example by the standard metric on~$\RP^{2n}$ and the Fubini–Study metric on~$\CP^{2n}$.
\end{ex}
\begin{rem}[Rigidity for non-orientable manifolds] \label{RemarkOrientable}
  We assume in \cref{RigidityThmHilbertBundleE} that the map~$f$ is orientable, but not that the involved manifolds are orientable. The orientability assumption in the classical statement by \textcite[][Theorem 2.4]{Goette2000} does not restrict the applicability of the classical theorem, since one can always apply it to the lift of the map to the oriented double coverings. This trick also works in our case for the map $\pr_1\colon \RP^{2n}\times T^k\to \RP^{2n}$ considered in \cref{ExampleProduct}, but not for the map $\RP^{2n} \times \Sigma^{8k+j} \To{\pr_1} \RP^{2n}$ in \cref{ExExoticSpehere}. The reason is that the lift $S^{2n} \times \Sigma^{8k+j} \To{\pr_1} S^{2n}$ does not fulfill the topological condition~(\ref{T1}), since the Euler characteristic of~$S^{2n}$ is two and the Hitchin invariant of~$\Sigma^{8k+j}$ is two torsion. This shows that including non-orientable manifolds in \cref{RigidityThmHilbertBundleE} extends the applicability of the theorem.
\end{rem}
In \cref{ExampleProduct}, we considered projections from a product of two manifold onto one of its factors. A generalization of this class of examples to possible non-trivial fiber bundles as in \cref{ExExoticSpehere} requires more topological conditions on the target manifold~$N$. The reason is that we have to ensure, additionally to the index-theoretical condition on the Euler characteristic and the Rosenberg index of the fiber, that the condition on the fundamental groups in~(\ref{T2}) is fulfilled. It is sufficient for the existence of a retraction from~$M_p$ to~$M$ on the level of fundamental groups that the target manifold is 2-connected (\cref{CorRigidityFiberBundles}).\par 
All simply connected closed Riemannian manifolds of non-negative curvature operator are classified. See for instance \cite[Theorem 1.13]{Wilking2007}.
%through the work of \textcite{Boehm2008} combinded with Gallot and Meyer's investigation of manifolds with non-negative curvature operator. 
This leads, together with the classification of irreducible symmetric spaces by \textcite{Cartan1926}, to \cref{LemClassificationN}. In \cref{CorRigidityFiberBundles}, we state the rigidity statement for fiber bundles over these manifolds.
\begin{lem} \label{LemClassificationN}
  Any factor of the de Rham decomposition of a closed 2-connected Riemannian manifold with non-negative curvature operator and non-vanishing Euler characteristic is isometric to a sphere~$S^{2n}$, $n\geq 2$, equipped with a metric of non-negative curvature operator, a quaternionic Grassmannian or the Cayley projective plane. 
\end{lem}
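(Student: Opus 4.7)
The plan is to reduce to the irreducible case via the de Rham decomposition and then apply the classification of closed simply connected Riemannian manifolds with non-negative curvature operator, pruning the list by the topological hypotheses.

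First I would apply the de Rham decomposition to the closed simply connected Riemannian manifold $N$, writing it isometrically as $N = N_1 \times \cdots \times N_r$ with each $N_i$ irreducible; no Euclidean factor can appear, since $N$ is compact and simply connected. Each $N_i$ is closed, simply connected, and inherits non-negative curvature operator, because the curvature operator of a Riemannian product is the orthogonal sum of the curvature operators of the factors. Both topological hypotheses propagate: multiplicativity $\chi(N) = \prod_i \chi(N_i)$ forces $\chi(N_i) \neq 0$ for every $i$, and the splitting $\pi_2(N) = \bigoplus_i \pi_2(N_i)$ forces each factor to be 2-connected.

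Next I would invoke \cite[Theorem 1.13]{Wilking2007}: any irreducible closed simply connected Riemannian manifold with non-negative curvature operator is either diffeomorphic to a sphere, biholomorphic to $\CP^m$, or isometric to a compact irreducible symmetric space. The $\CP^m$-case is immediately excluded since $\pi_2(\CP^m) = \Z \neq 0$. In the spherical case $\chi(N_i) \neq 0$ forces $\dim N_i$ to be even, and 2-connectedness forces $\dim N_i \geq 4$, which yields $N_i \cong S^{2n}$ with $n \geq 2$ equipped with a metric of non-negative curvature operator, as claimed.

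The main work, and the expected main obstacle, lies in enumerating which irreducible compact symmetric spaces satisfy both hypotheses. Writing such a space as $G/K$ with $G$ compact simply connected and $K$ connected, the long exact homotopy sequence of the fibration $K \to G \to G/K$ combined with $\pi_1(G) = \pi_2(G) = 0$ identifies $\pi_2(G/K)$ with $\pi_1(K)$, so 2-connectedness of $N_i$ translates into simple connectedness of $K$. The theorem of Hopf--Samelson further identifies $\chi(G/K) \neq 0$ with the equal-rank condition $\operatorname{rank}(G) = \operatorname{rank}(K)$, which alone already excludes all type~II spaces (compact simple Lie groups have vanishing Euler characteristic). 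Going through Cartan's classification of type~I spaces, the equal-rank entries whose isotropy $K$ contains a $U(1)$- or $\mathrm{SO}$-factor fail to be simply connected, while the remaining entries with $K$ simply connected but unequal rank (such as the spaces of type AII, EI, or EIV) fail the Euler characteristic condition; the intersection of both requirements consists exactly of the quaternionic Grassmannians $\operatorname{Sp}(p+q)/(\operatorname{Sp}(p) \times \operatorname{Sp}(q))$ and the Cayley projective plane $F_4/\mathrm{Spin}(9)$, which completes the proof.
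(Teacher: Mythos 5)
Your argument is correct and takes essentially the same route as the paper: de Rham splitting combined with Wilking's Theorem 1.13, then pruning the compact irreducible symmetric spaces by the equal-rank criterion for $\chi\neq 0$ and the homotopy exact sequence identifying $\pi_2(G/K)$ with $\pi_1(K)$ when $G$ is simply connected, exactly as in the paper's proof. One small caution about your table check: among the equal-rank exceptional spaces the isotropy need not contain a literal $\mathrm{U}(1)$- or $\SO$-factor --- for EII, EV, EVIII, EIX and FI it is a central $\Z_2$-quotient such as $(\mathrm{SU}(6)\times \mathrm{SU}(2))/\Z_2$ or $\Spin(16)/\Z_2$, and for EI it is $\Sp(4)/\Z_2$ (so not simply connected, contrary to your parenthetical) --- hence the exclusions should be run directly through $\pi_1(K)\neq 0$ rather than your stated dichotomy; the resulting list (quaternionic Grassmannians and the Cayley projective plane) is unaffected.
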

The \textit{quaternionic Grassmannian} is the $4pq$-dimensional symmetric space C~\textrm{II}, given by $\Sp\brackets{p+q}\slash \Sp\brackets{p}\times \Sp\brackets{q}$, and the \textit{Cayley projective plane} is the 16-dimensional symmetric space F~\textrm{II}, given by $F_4\slash \Spin\brackets{9}$. Here we denote by $\Sp\brackets{k}$ the compact symplectic group and by $F4$ the simply connected Lie group associated to the Lie algebra~$\mathfrak{f}_4$ \cite[see][]{Yokota2009}. Both manifolds are equipped with their unique (up to scaling) symmetric Riemannian metrics. 
\begin{proof}
  Let~$N$ be a factor in the de Rham decomposition of a 2-connected closed manifold with non-zero Euler characteristic and non-negative curvature operator. We obtain by \cite[Theorem 1.13]{Wilking2007} that~$N$ is isometric to one of the following Riemannian manifolds:
  \begin{itemize}
    \item Sphere $S^{2n}$, $n\geq 2$, with non-negative curvature operator.
    \item Compact irreducible 2-connected symmetric space with non-zero Euler characteristic.
  \end{itemize}
  Any irreducible symmetric space of compact type is listed in \cite[][Table~\textrm{V} on page 518]{Helgason2001} (type~\textrm{I}) or isometric to a compact connected simple Lie group equipped with a two-sided invariant metric
  %\footnote{The metric is uniquely determined up to a factor.}
  (type~\textrm{II}). Since any Lie group admits a nowhere vanishing vector field, its Euler characteristic vanishes. It remains to rule out all symmetric spaces in \cite[][Table~\textrm{V} on page 518]{Helgason2001}, except the quaternionic Grassmannian and the Cayley projective plane by checking whether they are 2-connected and have non-zero Euler characteristic. Therefore, we use the results about the exceptional Lie groups~$E6$, $E7$, $E8$, $F4$ and $G2$ given in \cite{Yokota2009} and the fact that for any symmetric space~$G\slash K$ of compact type, with~$G$ a connected simple Lie group and~$K$ a compact connected subgroup, the following holds:
  %Let~$G$ be a connected simple Lie group together with an involutive automorhism~$sigma$ such that the fixed subgroup~$K$ of~$sigma$ is connected. Then the following holds:
  \begin{enumerate}
    \item 
      The Euler characteristic of~$G\slash K$ is non-zero if and only if $\rk{G}=\rk{K}$.
    \item 
      The symmetric space $G\slash K$ is 2-connected if and only if the inclusion $\iota\colon K\xhookrightarrow{} G$ induces an isomorphism $\iota_*\colon \pi_1\brackets{K}\to \pi_1\brackets{G}$.
  \end{enumerate}
  The second part holds by $\pi_2\brackets{G}=0$ and the long exact sequence of homotopy groups
  \begin{equation}
    0 \To{} \pi_2\brackets{G\slash K} \To{} \pi_1\brackets{K} \To{i_*} \pi_1\brackets{G} \To{} \pi_1\brackets{G\slash K} \To{} 0. \qedhere \eqno \qed
  \end{equation}  
\end{proof}
\begin{cor} \label{CorRigidityFiberBundles}
  Let~$N$ be a product of Riemannian manifolds of the following form:
  \begin{itemize}
    \item Sphere~$S^{2n}$, $n\geq 2$, equipped with a metric satisfying $\mathcal{R}_{S^n}\geq 0$ and $\scal_{S^n}>2\Ric_{S^n}>0$.
    \item Quaternionic Grassmannian
    \item Cayley projective plane
  \end{itemize}
  Let $f\colon M\to N$ be a fiber bundle, where the total space~$M$ is a closed connected spin manifold and the typical fiber~$F$ satisfies
  \begin{equation} \label{IndexTheoreticalConditionFiberBundle}
    \chi\brackets{N}\cdot\alpha\brackets{F}\neq 0 \in \KO_{\dim \brackets{F}}\brackets{\Cstar \pi_1\brackets{F}}.
  \end{equation}
  Then~$f$ is a Riemannian submersion with $\scal_M=\scal_N \op f$ if the map~$f$ is area non-increasing and $\scal_M\geq \scal_N\op f$ holds. Equation (\ref{IndexTheoreticalConditionFiberBundle}) holds if the typical fiber~$F$ is area-enlargeable, or~$F$ is aspherical and the strong Novikov conjecture holds for its fundamental group.
\end{cor}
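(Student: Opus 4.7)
The plan is to apply \cref{RigidityThmHigherDergeeRosenberg} under the index-theoretical hypothesis~(\ref{T2}). I have to verify its curvature assumptions on~$N$, its topological conditions on~$f$ (including the spin-map property and a retraction on fundamental groups), and the index-theoretical condition on the fiber.

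For the curvature assumptions on~$N$, each listed factor has non-negative curvature operator (by hypothesis for~$S^{2n}$; as a symmetric space of compact type for the other two), and each factor satisfies $\scal>2\Ric>0$: for the sphere this is assumed, and the symmetric factors are Einstein of positive Einstein constant in dimension at least~$4$, so $2\Ric=2\lambda<n\lambda=\scal$ pointwise. Non-negativity of the curvature operator is stable under Riemannian products, and the inequality $\scal>2\Ric>0$ also survives products: for a unit tangent vector $v=\brackets{v_1,\dots,v_l}$ to $N=N_1\times\cdots\times N_l$, the direct-sum structure of $\Ric_N$ gives $2\Ric_N\brackets{v,v}=\sum_i 2\Ric_{N_i}\brackets{v_i,v_i}<\sum_i \scal_{N_i}\norm{v_i}^2\leq \scal_N$, using on each factor the strict estimate $2\Ric_{N_i}<\scal_{N_i}$.

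For the topological conditions, each factor is $2$-connected (the spheres of dimension~$\geq 2$ trivially; the symmetric spaces because their isotropy subgroups induce isomorphisms on~$\pi_1$, as in the proof of \cref{LemClassificationN}), hence so is~$N$. The long exact sequence of the fibration $F\to M\to N$ then reduces to $0\to\pi_1\brackets{F}\xrightarrow{\iota_*}\pi_1\brackets{M}\to 0$ and shows that the fiber~$F$ is connected; in particular~$M_p\cong F$ is connected for every regular value~$p$, and the inverse of~$\iota_*$ provides the required retraction of $\pi_1\brackets{M_p}$ onto $\pi_1\brackets{M}$. Moreover, $2$-connectedness of~$N$ forces $w_1\brackets{TN}=w_2\brackets{TN}=0$, so the spin-map property of~$f$ is equivalent to the spin assumption on~$M$. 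Finally, $\chi\brackets{N}=\prod_i\chi\brackets{N_i}\neq 0$, since $\chi\brackets{S^{2n}}=2$ and the symmetric factors have equal rank of isometry group and stabilizer.

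For the index-theoretical condition~(\ref{T2}), the Rosenberg index $\alpha\brackets{F}$ is (rationally) non-zero under either sufficient hypothesis on~$F$: by the Hanke--Schick theorem recalled in \cref{ExampleProduct} when~$F$ is area-enlargeable, and as the image of the rational $\KO$-fundamental class under the assembly map when~$F$ is aspherical with strong Novikov conjecture. Combined with $\chi\brackets{N}\neq 0$, this gives~(\ref{T2}), so \cref{RigidityThmHigherDergeeRosenberg} implies the conclusion of the corollary. The only mildly non-routine input is the product stability of $\scal>2\Ric>0$ indicated above; everything else is direct verification.
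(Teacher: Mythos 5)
Your proposal is correct and follows essentially the same route as the paper: reduce to \cref{RigidityThmHigherDergeeRosenberg} under condition~(\ref{T2}), using $2$-connectedness of~$N$ to get the spin-map property and, via the long exact homotopy sequence of the fiber bundle, the isomorphism $\iota_*\colon\pi_1\brackets{F}\to\pi_1\brackets{M}$ whose inverse is the required retraction, and then invoke the area-enlargeable/aspherical considerations for $\alpha\brackets{F}$. The only difference is cosmetic: where the paper cites \cref{LemClassificationN} and the discussion in Goette--Semmelmann for the curvature and Euler-characteristic facts, you verify directly (and correctly) that $\mathcal{R}_N\geq 0$, $\scal_N>2\Ric_N>0$ and $\chi\brackets{N}\neq 0$ pass to the product.
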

\begin{proof}
  The manifold~$N$ is 2-connected, has non-zero Euler characteristic and has non-negative curvature operator by \cref{LemClassificationN}. Furthermore, it satisfies $\scal_{S^n}>2\Ric_{S^n}>0$ by the discussion about locally symmetric spaces of compact type in \cite[end of section 1c]{Goette2000}. Let~$f\colon M\to N$ be a fiber bundle with typical fiber~$F$ as in the theorem. Since~$N$ is 2-connected, it is spin, hence~$f$ is a spin map. From the long exact homotopy sequence of fiber bundles 
  %\cite[Proposition 4.48 and Theorem 4.41]{Hatcher2010}
  \begin{equation}
    \dots \To{} \pi_2\brackets{N} \To{} \pi_1\brackets{F} \To{\iota_*} \pi_1\brackets{M} \To{} \pi_1\brackets{N} \To{} \dots
  \end{equation}
  %Note that M_p is by the exact sequence connected hence the basepoint can be omitted in the sequence.
  together with the 2-connectedness of~$N$, we obtain that $\iota_*\colon \pi_1\brackets{F} \to \pi_1\brackets{M}$ is an isomorphism. The condition~(\ref{T2}) in \cref{RigidityThmHigherDergeeRosenberg} is fulfilled, hence the rigidity statement holds. The second part of the corollary follows from the same considerations as in \cref{ExampleProduct} about area-enlargeable and aspherical manifolds.
\end{proof}
%
%\begin{que}
% Let~$M$ a closed connected manifold admitting no positive scalar curvature metric and~$N$ a closed connected manifold (of non-zero Euler characteristic). Does for appropriate Riemannian metrics on~$N\times M$ and~$N$ the extremality and rigidity statement in \cref{RigidityThmHilbertBundleE} hold for the projection $N \times M \to N$?
%\end{que}
%
%\nocite{*} 
\printbibliography
\end{document}